\numberwithin{equation}{section}
\newtheorem{thm}{Theorem}[section]
\newtheorem{corollary}[thm]{Corollary}
\newtheorem{lemma}[thm]{Lemma}
\newtheorem{proposition}[thm]{Proposition}
\theoremstyle{definition}
\theoremstyle{remark}
\newtheorem{remark}[thm]{Remark}
\newtheorem{example}[thm]{Example}
\newcommand\bp{\begin{proof}}
\newcommand\ep{\end{proof}}
\newcommand{\un}{\mathds{1}}
\renewcommand\ker{\operatorname{ker}}
\newcommand\Irr{\operatorname{Irr}}
\newcommand\fix{\mathrm{fix}}
\newcommand\triv{\mathrm{triv}}
\newcommand\R{\mathbb{R}}
\newcommand\N{\mathbb{N}}
\newcommand\Z{\mathbb{Z}}
\newcommand\T{\mathbb{T}}
\newcommand{\CG}[0]{\mathcal{G}} 
\newcommand{\CO}[0]{\mathcal{O}} 
\newcommand\LL{\mathcal L}
\newcommand\ssubset{\subset\hspace*{-1.7mm}\subset}
\begin{document}

\title[Equilibrium states on right LCM semigroup C$^*$-algebras]{The groupoid approach to equilibrium states on right LCM semigroup C$^*$-algebras}

\author[S.~Neshveyev]{Sergey Neshveyev}
\address{Department of Mathematics \\ University of Oslo \\ P.O. Box 1053 \\ Blindern \\ NO-0316 Oslo \\ Norway}
\email{sergeyn@math.uio.no, n.stammeier@gmail.com}

\author[N.~Stammeier]{Nicolai Stammeier}

\thanks{This research was supported by the Research Council of Norway (RCN FRIPRO 240362).} 

\begin{abstract}
Given a right LCM semigroup $S$ and a homomorphism $N\colon S\to[1,+\infty)$, we use the groupoid approach to study the KMS$_\beta$-states on $C^*(S)$ with respect to the dynamics induced by~$N$. We establish necessary and sufficient conditions for the existence and uniqueness of KMS$_\beta$-states. As an application, we show that the sufficient condition for the uniqueness obtained for so-called generalized scales is necessary as well. Our most complete results are obtained for inverse temperatures $\beta$ at which the $\zeta$-function of $N$ is finite. In this case we get an explicit bijective correspondence between the KMS$_\beta$-states on $C^*(S)$ and the tracial states on $C^*(\ker N)$.
\end{abstract}

\date{December 6, 2019; new version: January 29, 2020; minor changes: June 7, 2021}

\maketitle

\section{Introduction}

In the recent years analysis of equilibrium states on semigroup C$^*$-algebras $C^*(S)$ with respect to the dynamics $\sigma^N$ defined by a homomorphism $N\colon S\to[1,+\infty)$ has been an increasingly popular topic. Most of the examples studied in the literature actually arise from a particular class of semigroups - the quasi-lattice ordered ones and, a bit more generally, the right LCM semigroups, which are left cancellative semigroups such that the intersection of any two right principal ideals is either empty or a principal ideal again. Such pairs $(C^*(S),\sigma^N)$ provide a rich yet tractable class of noncommutative dynamical systems. Indeed, under the mild additional assumption that $1$ is an isolated point of $N(S)\subset[1,+\infty)$, any $\sigma^N$-KMS$_\beta$-state on $C^*(S)$ is completely determined by its restriction to $C^*(\ker N)$, see Lemma~\ref{lem:ker-reduction}. So if we have a good understanding of the trace state space of $C^*(\ker N)$, we can quickly obtain explicit formulas for all hypothetical KMS-states. This reduces the task to determining which formulas define states on $C^*(S)$.

This strategy has been successfully employed for the $ax+b$ semigroup $S=\Z_+\rtimes\N$ (and the obvious homomorphism $N\colon S\to\N$) in~\cite{LR}, which inspired a lot of subsequent work, semigroups arising from integer dilation matrices~\cite{LRR} and self-similar group actions~\cite{LRRW},  Baumslag--Solitar semigroups~\citelist{\cite{CaHR}\cite{BLRS}},  semigroups defined by algebraic dynamical systems~\cite{ABLS}, and arbitrary quasi-lattice ordered semigroups (and a class of homomorphisms $N$)~\cite{BLRSi}.

An attempt to distill the essential properties of the pair $(S,N)$ that make the analysis in these papers possible has been made in~\cite{ABLS} and~\cite{BLRS} and led to the notion of a \emph{generalized scale} $N$ on $S$, see Example~\ref{ex:generalized_scale} below for the precise definition. On the one hand, the resulting theory is quite satisfactory, as it covers a majority of examples studied in the literature. On the other hand, the standing assumptions seem excessively strong and exclude already such simple examples as free abelian monoids of rank $\ge2$ with nontrivial homomorphisms~$N$. It is therefore natural to look for a more flexible approach.

In this work we will use the groupoid picture of $C^*(S)$ for right LCM semigroups $S$ to study the equilibrium states on $C^*(S)$ with respect to $\sigma^N$. Our goal is twofold - one, to see how far this approach can lead us with as little assumptions as possible on $(S,N)$, and two, to check whether it is possible to formulate the results we get in combinatorial/semigroup-theoretic terms not involving some obscure assumptions on the groupoid $\CG(S)$ underlying $C^*(S)$.

\smallskip

In detail, the contents of this paper is as follows. In the preliminary Section~\ref{sec:groupoid} we recall the groupoid picture for $C^*(S)$ as presented in \citelist{\cite{Nor}\cite{Pat}}. In addition, we include the description of KMS-states on the groupoid C$^*$-algebras in terms of quasi-invariant measures and fields of tracial states on the C$^*$-algebras of the isotropy groups given in~\cite{Nes}. In this context, we briefly explain that although the second countability assumption in~\cite{Nes} forces us to consider countable~$S$, all our main results can be extended to general right LCM semigroups by passing to a limit, once they are formulated in a groupoid-free form.

\smallskip

In Section~\ref{sec:measures} we begin by proving that
$$
s(\ker N)\cap tS\ne\emptyset \quad \text{for all } s \in S \text{ and } t\in \ker N
$$
is a necessary condition for the existence of a $\sigma^N$-KMS$_\beta$-state on $C^*(S)$ for some $\beta>0$. Once this condition is satisfied, we get a quasi-lattice ordered set $S/{\sim_N}$. This set plays a central role in all subsequent considerations. In particular, Proposition~\ref{prop:muN} shows that the existence of a KMS$_\beta$-state, or in other words, the existence of a quasi-invariant probability measure $\mu_{N,\beta}$ on the unit space~$\CG(S)^{(0)}$ of the groupoid~$\CG(S)$ with a prescribed Radon--Nikodym cocycle, is equivalent to a (generally infinite) system of inequalities for the map $S/{\sim_N}\to[1,+\infty)$ induced by $N^\beta$, cf.~\cite{ALN}*{Section~2}.

\smallskip

In Section~\ref{sec:KMS} we consider $\beta$ such that
$$
\zeta_N(\beta):=\sum_{[s]\in S/{\sim_N}}N(s)^{-\beta}<+\infty.
$$
In this case we have a complete description of the KMS$_\beta$-states: by Theorem~\ref{thm:finiteKMS}, they are in a bijective correspondence with the tracial states on $C^*(\ker N)$. Conceptually the picture is very simple (cf.~\cite{Nes}*{Section~3}): the measure $\mu_{N,\beta}$ is concentrated on one orbit of the action of $\CG(S)$ on $\CG(S)^{(0)}$, so the fields of tracial states corresponding to the KMS$_\beta$-states are parameterized by the tracial states on the group C$^*$-algebra of the isotropy group of any chosen point on the orbit. Picking a particular such point we get the enveloping group of $\ker N$ as the isotropy group, so we can as well parameterize the KMS$_\beta$-states by the tracial states on $C^*(\ker N)$. Furthermore, the orbit can be canonically identified with~$S/{\sim_N}$, which allows us to write down the correspondence between the KMS$_\beta$-states and the traces explicitly, without appealing to the groupoid $\CG(S)$. This result confirms the general principle that once $\beta$ is large enough, so that an appropriate $\zeta$-function is finite, all the KMS$_\beta$-states can be explicitly described as Gibbs-like states, cf.~\cite{ALN}*{Section~7}. We also obtain corresponding results for the KMS$_\infty$ and ground states in this section.

\smallskip

In Section~\ref{sec:unique} we fix $\beta$ which admits a KMS$_\beta$-state in order to study the question when this state is unique. By a general result on groupoid C$^*$-algebras, uniqueness holds if and only if the isotropy bundle  of $\CG(S)$ is essentially trivial with respect to the measure $\mu_{N,\beta}$. We show that it is possible to write this condition in a combinatorial form, see Theorem~\ref{thm:unique}. Although the formulas and the uniqueness criterion that we get become complicated, we show in Example~\ref{ex:generalized_scale} that for generalized scales our criterion reduces to the conditions introduced in~\cite{BLRS} and therefore covers most examples in the literature. As a consequence, the conditions in~\cite{BLRS} are not only sufficient for the uniqueness, but also necessary. We finish the section with a short comment on simplicity of the boundary quotient of~$C^*(S)$, see Proposition~\ref{prop:BQ simple}.

\smallskip

Finally, let us say a few words about what is \emph{not} done in this paper. The remaining problem is to be able to say something meaningful in the case when $\zeta_N(\beta)=+\infty$, a KMS$_\beta$-state exists, yet the uniqueness criterion is not satisfied. In this case the isotropy bundle is not essentially trivial with respect to $\mu_{N,\beta}$ and one should expect that the measure $\mu_{N,\beta}$ is nonatomic. There are few general results describing all possible fields of tracial states defining the KMS-states in such situations, see~\citelist{\cite{Nes}\cite{Ch}}. It therefore remains to be seen how useful the groupoid approach to the semigroup C$^*$-algebras is in this case.

\smallskip

\paragraph{\bf Acknowledgement} {This work would probably not exist today if it had not been for the stimulating atmosphere during the \emph{Cuntz--Pimsner Cross-Pollination} workshop at the Lorentz Center in Leiden in June 2018, which is why the authors would like to thank the organizers for providing an opportunity away from the regular work group encounters.}

\section{The groupoid picture}\label{sec:groupoid}

Let $S$ be a right LCM semigroup, by which we mean a left cancellative semigroup with identity element~$e$ such that for all $s,t\in S$ we have either $sS\cap tS=\emptyset$ or $sS\cap tS=rS$ for some $r\in S$. If $t\in sS$ for some $s,t\in S$, we denote by $s^{-1}t$ the unique element of $S$ such that $s(s^{-1}t)=t$.

The semigroup C$^*$-algebra $C^*(S)$ is generated by the isometries $v_s$, $s\in S$, satisfying the relations
$$
v_e=1,\ \ v_sv_t=v_{st},\ \ \text{and}\ \ v_s^{\phantom{*}}v_s^*v_t^{\phantom{*}}v_t^*=\begin{cases}v_r^{\phantom{*}}v_r^*,&\text{if}\ \ sS\cap tS=rS,\\0,&\text{if}\ \ sS\cap tS=\emptyset.\end{cases}
$$
The elements $v_s^{\phantom{*}}v_t^*$, for all $s,t\in S$, span a dense subspace of $C^*(S)$.

By a \emph{scale} on $S$ we mean a semigroup homomorphism $N\colon S\to([1,+\infty),\cdot)$. Every scale $N$ defines a one-parameter group of automorphisms of $C^*(S)$ by
$$
\sigma^N_t(v_s)=N(s)^{it}v_s.
$$
We are interested in understanding the $\sigma^N$-KMS-states on $C^*(S)$. To be precise, for $\beta\in\R$, by a $\sigma^N$-KMS$_\beta$-state we mean a state $\varphi$ on $C^*(S)$ such that
\[
\varphi(ab) = \varphi(b \sigma^N_{i\beta}(a))
\]
for all $a$ and $b$ in a dense subspace of $\sigma^N$-analytic elements of $C^*(S)$.

Our approach relies on the groupoid picture of $C^*(S)$. For its presentation, let us first consider the left inverse hull $I_\ell(S)$ of $S$, that is, the inverse semigroup $\{\lambda_s^{\phantom{1}}\lambda_t^{-1} \mid s,t \in S\}\cup\{0\}$ given by the partial bijections $\lambda_s\colon S \to sS$, $t\mapsto st$. By \cite{Nor}*{Proposition~3.3.2}, we have a canonical isomorphism $C^*(S)\cong C^*_0(I_\ell(S))$, where the subscript $0$ indicates that we consider only those representations of $I_\ell(S)$ by partial isometries that kill $0\in I_\ell(S)$. The C$^*$-algebra $C^*_0(I_\ell(S))$ has a well-known description as a groupoid C$^*$-algebra, see~\cite{CELY}*{Section~5.5} or \cite{Pat}*{Chapter~4}. We shall now recall the construction of the corresponding groupoid $\CG(S)$, as its structure is central to the subsequent parts of this work.

Let $E(S):=\{e_s:= \lambda_s^{\phantom{1}}\lambda_s^{-1} \mid s \in S\}\cup\{0\}$ be the semilattice of idempotents of $I_\ell(S)$ and
$$
\widehat{E(S)}:= \{\chi\colon E(S) \to \{0,1\} \mid \chi \text{ is a nonzero semigroup homomorphism},\ \chi(0)=0\}
$$
be the space of characters on $E(S)$. Define
$$
\Sigma := \{(\lambda_s^{\phantom{1}}\lambda_t^{-1},\chi) \in I_\ell(S)\times\widehat{E(S)} \mid \chi(e_t)=1\},
$$
as well as an equivalence relation $\sim$ on $\Sigma$ such that $(\lambda_{s_1}^{\phantom{1}}\lambda_{t_1}^{-1},\chi_1) \sim (\lambda_{s_2}^{\phantom{1}}\lambda_{t_2}^{-1},\chi_2)$ iff
\[ \chi_1=\chi_2 \text{ and } \exists\, r \in S: \lambda_{s_1}^{\phantom{1}}\lambda_{t_1}^{-1}e_r=\lambda_{s_2}^{\phantom{1}}\lambda_{t_2}^{-1}e_r \text{ and } \chi_1(e_r)=1.\]

This equivalence relation can be characterized as follows.

\begin{lemma}\label{lem:equivalence relation explained}
We have $(\lambda_{s_1}^{\phantom{1}}\lambda_{t_1}^{-1},\chi)\sim(\lambda_{s_2}^{\phantom{1}}\lambda_{t_2}^{-1},\chi)$ if and only if there exist $r_1,r_2 \in S$ with
\[s_1r_1=s_2r_2,\ \ t_1r_1=t_2r_2\ \ \text{and}\ \ \chi(e_{t_1r_1}) =1.\]
\end{lemma}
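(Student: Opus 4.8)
The plan is to translate everything into the language of the partial bijections and to track precisely how right-multiplication by an idempotent $e_r$ restricts them. First I would record the only computation I need: $\lambda_s\lambda_t^{-1}$ is the partial bijection with domain $tS$ sending $tu\mapsto su$, while $e_r$ is the identity map on $rS$. Hence $\lambda_s\lambda_t^{-1}e_r$ has domain $tS\cap rS$, which by the right LCM property is either empty or of the form $qS$; and whenever $q\in tS$, so that $q=t(t^{-1}q)$, this restricted map sends $qu\mapsto s(t^{-1}q)u$. Everything below is extracted from this description together with the fact that equal partial bijections have equal domains.

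For the implication from the displayed equations to $\sim$, I would set $r:=t_1r_1=t_2r_2$ and verify the defining condition directly. Since $r=t_1r_1$ and $r=t_2r_2$, we have $rS\subseteq t_1S\cap t_2S$, so both $\lambda_{s_1}\lambda_{t_1}^{-1}e_r$ and $\lambda_{s_2}\lambda_{t_2}^{-1}e_r$ have domain exactly $rS$, and on an element $ru$ they return $s_1r_1u$ and $s_2r_2u$ respectively. These agree because $s_1r_1=s_2r_2$, whence $\lambda_{s_1}\lambda_{t_1}^{-1}e_r=\lambda_{s_2}\lambda_{t_2}^{-1}e_r$; as $\chi(e_r)=1$ is assumed, this is precisely the defining condition for $\sim$.

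For the converse, I would start from $r\in S$ with $\lambda_{s_1}\lambda_{t_1}^{-1}e_r=\lambda_{s_2}\lambda_{t_2}^{-1}e_r$ and $\chi(e_r)=1$, and recover the witnesses $r_1,r_2$. Equality of the two restricted maps forces equality of their domains, so $t_1S\cap rS=t_2S\cap rS=:D$. Since both pairs lie in $\Sigma$ we have $\chi(e_{t_1})=\chi(e_{t_2})=1$, and as $\chi$ is a homomorphism with $\chi(e_r)=1$ we get $\chi(e_{t_1}e_r)=1\neq\chi(0)$, so $e_{t_1}e_r\neq0$ and $D\neq\emptyset$. Thus $D=qS$ with $q=t_1q_1=t_2q_2$ where $q_i:=t_i^{-1}q$. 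Evaluating the two equal maps at $q$ gives $s_1q_1=s_2q_2$, while $t_1q_1=q=t_2q_2$ by construction, and $e_q=e_{t_1}e_r$ yields $\chi(e_{t_1q_1})=\chi(e_q)=1$. Taking $r_1=q_1$ and $r_2=q_2$ then produces the displayed condition.

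The step requiring the most care is this converse: the essential observations are that equality of the restricted partial bijections already forces equality of their domains, and that the character condition together with $\chi(e_{t_1})=1$ rules out the empty domain, allowing one to pass to a common principal ideal $qS$. After that, left cancellativity and the explicit action formula make the identification of $r_1,r_2$ routine.
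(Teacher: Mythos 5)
Your proof is correct and follows essentially the same route as the paper: in both directions you pass between the witness $r=t_1r_1=t_2r_2$ and the common domain $t_1S\cap rS=t_2S\cap rS=qS$, using that equality of the restricted partial bijections forces equality of domains and that the character condition rules out the empty intersection. The only cosmetic difference is that you conclude $s_1q_1=s_2q_2$ by evaluating the equal maps at $q$, where the paper rewrites them as $\lambda_{s_ir_i}^{\phantom{1}}\lambda_{t_ir_i}^{-1}$; this is the same computation.
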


\begin{proof}
Assume $(\lambda_{s_1}^{\phantom{1}}\lambda_{t_1}^{-1},\chi) \sim (\lambda_{s_2}^{\phantom{1}}\lambda_{t_2}^{-1},\chi)$, that is, there is $r\in S$ with $\lambda_{s_1}^{\phantom{1}}\lambda_{t_1}^{-1}e_r = \lambda_{s_2}^{\phantom{1}}\lambda_{t_2}^{-1}e_r$ and $\chi(e_r)=1$.
In particular, this implies that the corresponding domain idempotents match: $e_{t_1}e_r=e_{t_2}e_r$.
Hence $t_1S\cap rS=r'S=t_2S\cap rS$ for some $r'\in S$, and then $\chi(e_{r'})=1$, as $\chi$ is nonzero on~$e_{t_i}$ and~$e_r$. We have $t_1r_1=r'=t_2r_2$ for some $r_1$ and $r_2$.
Then
\[ \lambda_{s_1r_1}^{\phantom{1}}\lambda_{t_1r_1}^{-1} =\lambda_{s_1}^{\phantom{1}}\lambda_{t_1}^{-1}e_r = \lambda_{s_2}^{\phantom{1}}\lambda_{t_2}^{-1}e_r = \lambda_{s_2r_2}^{\phantom{1}}\lambda_{t_2r_2}^{-1}, \]
which forces $s_1r_1=s_2r_2$. This proves the lemma in one direction.

Conversely, if $s_1r_1=s_2r_2$, $t_1r_1=t_2r_2$ and $\chi(e_{t_1r_1}) =1$, then letting $r=t_1r_1=t_2r_2$, we easily see that $\lambda_{s_1}^{\phantom{1}}\lambda_{t_1}^{-1}e_r=\lambda_{s_2}^{\phantom{1}}\lambda_{t_2}^{-1}e_r$ and  $\chi(e_r)=1$.
\end{proof}

As a set, our groupoid is
$$
\CG(S):= \Sigma/{\sim},
$$
and we denote by $[x,\chi]\in\CG(S)$ the class of $(x,\chi)\in\Sigma$.

We consider $\widehat{E(S)}$ as a subset of $\CG(S)$ by identifying $\chi\in\widehat{E(S)}$ with $[e_t,\chi]$, where $t\in S$ is any element (for example, the identity element) such that $\chi(e_t)=1$. This makes sense, since if $s$ and $t$ are two such elements, then $\chi(e_se_t)=1$, hence $sS\cap tS=rS$ for some $r$ such that $\chi(e_r)=1$, and therefore $[e_s,\chi]=[e_t,\chi]$.

For $(x,\chi) \in \Sigma$, define $x\cdot\chi$ by $$(x\cdot\chi)(e_r) := \chi(x^{-1}e_rx).$$ Two elements $[x,\chi]$ and $[y,\psi]$ in $\CG(S)$ are composable if $\chi=y\cdot \psi$, in which case we set $$[x,\chi]\cdot [y,\psi] := [xy,\psi].$$
Thus, the unit space of $\CG(S)$ is $\widehat{E(S)}$, and the source and range maps are given by
$$
s([x,\chi]) = \chi,\ \ r([x,\chi]) = x\cdot\chi.
$$
The inversion is given by $$[x,\chi]^{-1} = [x^{-1},x\cdot\chi].$$

The set $\widehat{E(S)}$ is a compact space in the topology of pointwise convergence.
We then define a topology on $\CG(S)$ by taking as a basis of topology
the sets $$D(\lambda_s^{\phantom{1}}\lambda_t^{-1},U):= \{[\lambda_s^{\phantom{1}}\lambda_t^{-1},\chi] \mid \chi \in U\},$$
where $s,t \in S$ and $U$ is an open subset of
$$
Z_t:=\{\chi \in \widehat{E(S)} \mid \chi(e_t)=1\}.
$$
One checks that this turns $\CG(S)$ into a locally compact, but not necessarily Hausdorff, \'{e}tale groupoid. To be precise, by a locally compact \'{e}tale groupoid we mean a groupoid $\CG$ endowed with a topology such that:
\begin{enumerate}
\item[-] the groupoid operations are continuous;
\item[-] every point of $\CG$ has a compact Hausdorff neighbourhood;
\item[-] the unit space $\CG^{(0)}$ and the fibers $\CG^\chi=r^{-1}(\chi)$ of the range map $r$ are Hausdorff;
\item[-] the map $r$ is a local homeomorphism;
\end{enumerate}
see~\cite{Pat}*{Definitions~2.2.1 and~2.2.3} or~\cite{MW}*{p.6}. The groupoid $\CG(S)$ is second countable if $S$ is countable.

Given a locally compact \'{e}tale groupoid $\CG$, the space $C_c(\CG)$ is defined as the linear span of functions $f$ such that $f$ is continuous, with compact support, on an open Hausdorff set and zero outside that set. Note that such functions $f$ are not necessarily continuous on $\CG$ (and for this reason the space $C_c(\CG)$ is denoted by $\mathscr{C}(\CG)$ in~\cite{MW}). Being equipped with the convolution product and the involution $f^*(g)=\overline{f(g^{-1})}$, the space $C_c(\CG)$ becomes a $*$-algebra, and its C$^*$-enveloping algebra is denoted by $C^*(\CG)$.

For any right LCM semigroup $S$, we have an isomorphism $C^*(S)\cong C^*(\CG(S))$ given by
\begin{equation} \label{eq:groupoid-iso}
v_s^{\phantom{*}}v_t^* \mapsto \un_{D(\lambda_s^{\phantom{1}}\lambda_t^{-1},Z_t)},
\end{equation}
where $\un_X$ denotes the characteristic function of a set $X$.

\smallskip

The following well-known lemma gives an alternative description of the subalgebra $C(\widehat{E(S)})\subset C^*(\CG(S))$ and will be used repeatedly in the sequel.

\begin{lemma}\label{lem:lattice-set}
We have a C$^*$-algebra isomorphism of $C(\widehat{E(S)})$ onto the norm closure of $$\operatorname{span}\{\un_{sS}\mid s\in S\}\subset\ell^\infty(S)$$ that maps $\un_{Z_s}$ into $\un_{sS}$ for all $s\in S$.
\end{lemma}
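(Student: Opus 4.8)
The plan is to realize the desired map as the pullback along a natural embedding of $S$ into its own character space. For $t\in S$ define $\chi_t\colon E(S)\to\{0,1\}$ by $\chi_t(e_s)=\un_{sS}(t)$ and $\chi_t(0)=0$. Using the right LCM property, so that $e_se_{s'}=e_r$ when $sS\cap s'S=rS$ and $e_se_{s'}=0$ when $sS\cap s'S=\emptyset$, one checks directly that $\chi_t$ is a nonzero semigroup homomorphism, i.e.\ $\chi_t\in\widehat{E(S)}$, and that $e_e$ is the top idempotent so that $\chi_t(e_e)=1$ for every $t$. This gives a map $\iota\colon S\to\widehat{E(S)}$, $t\mapsto\chi_t$, and I would consider the induced $*$-homomorphism $\iota^*\colon C(\widehat{E(S)})\to\ell^\infty(S)$, $f\mapsto f\circ\iota$. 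By construction $\iota^*(\un_{Z_s})=\un_{sS}$, since $\un_{Z_s}(\chi_t)=\chi_t(e_s)=\un_{sS}(t)$, so $\iota^*$ already has the asserted effect on the generators.

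Next I would identify the range of $\iota^*$. The functions $\un_{Z_s}$ are projections in $C(\widehat{E(S)})$ that are closed under multiplication, because $\un_{Z_s}\un_{Z_{s'}}$ equals $\un_{Z_r}$ when $sS\cap s'S=rS$ and equals $0$ when $sS\cap s'S=\emptyset$; together with $\un_{Z_e}=1$ this shows that $\operatorname{span}\{\un_{Z_s}\mid s\in S\}$ is a unital self-adjoint subalgebra of $C(\widehat{E(S)})$. Since two distinct characters differ on some $e_s$, these functions separate points, so by Stone--Weierstrass the span is dense in $C(\widehat{E(S)})$. As $\iota^*$ is norm-decreasing and maps this dense subalgebra onto $\operatorname{span}\{\un_{sS}\mid s\in S\}$, its image lies in, and in fact equals, the norm closure of the latter in $\ell^\infty(S)$.

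It then remains to prove that $\iota^*$ is injective, equivalently that $\iota(S)$ is dense in $\widehat{E(S)}$; this is the main point of the argument. Given $\chi\in\widehat{E(S)}$ and finitely many idempotents $e_{s_1},\dots,e_{s_n}$, put $F_1=\{i:\chi(e_{s_i})=1\}$ and let $rS=\bigcap_{i\in F_1}s_iS$, with $r=e$ if $F_1=\emptyset$; this is a principal ideal by the right LCM property and satisfies $\chi(e_r)=1$. I claim $t=r$ realizes the prescribed values. For $i\in F_1$ we have $r\in rS\subseteq s_iS$, so $\chi_r(e_{s_i})=1$; while for $j$ with $\chi(e_{s_j})=0$ the inclusion $r\in s_jS$ would force $rS\subseteq s_jS$, hence $e_re_{s_j}=e_r$ and $\chi(e_{s_j})=\chi(e_r)=1$, a contradiction, so $\chi_r(e_{s_j})=0$. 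Thus $\chi_r$ agrees with $\chi$ on $e_{s_1},\dots,e_{s_n}$, and density follows. An injective $*$-homomorphism between C$^*$-algebras is isometric, hence $\iota^*$ is an isomorphism of $C(\widehat{E(S)})$ onto its closed image, which is the norm closure of $\operatorname{span}\{\un_{sS}\mid s\in S\}$, carrying $\un_{Z_s}$ to $\un_{sS}$.

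The only genuinely non-formal step is the density of $\iota(S)$; everything else is bookkeeping with Stone--Weierstrass and the pullback. The subtlety to watch is ensuring that the chosen element $t$ simultaneously lies in all the required principal ideals and avoids the complementary ones, which is exactly where the right LCM hypothesis and the homomorphism property of $\chi$ enter.
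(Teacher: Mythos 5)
Your proof is correct, but there is nothing in the paper to compare it against: the authors state Lemma~\ref{lem:lattice-set} as a ``well-known simple lemma'' and give no proof at all, so your write-up supplies an argument the paper omits. What you give is the standard Gelfand-type argument, and all the essential points are in place: the evaluation characters $\chi_t(e_s)=\un_{sS}(t)$ are well defined (note $e_s$ determines $sS$ as its domain, so this does not depend on the choice of $s$ with $e_s=e_{s'}$), the semilattice relations $e_se_{s'}=e_r$ or $0$ make each $\chi_t$ multiplicative, and Stone--Weierstrass applies since $\un_{Z_e}=1$ and characters agreeing on all $e_s$ are equal. The one step you correctly flag as non-formal --- density of $\iota(S)$ --- is also handled properly: the intersection $\bigcap_{i\in F_1}s_iS$ is nonempty (hence principal) precisely because $\chi(\prod_{i\in F_1}e_{s_i})=1\ne 0$, which is implicit in your assertion that $\chi(e_r)=1$ and deserves to be said explicitly; and your contradiction argument for indices $j$ with $\chi(e_{s_j})=0$ works uniformly in the edge case $F_1=\emptyset$, $r=e$, because $\chi(e_e)=1$ for every nonzero character. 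Finally, the passage from injectivity to the conclusion is clean: an injective $*$-homomorphism is isometric, so the image is closed and equals the closure of $\operatorname{span}\{\un_{sS}\mid s\in S\}$, which it contains. In short, a complete and correct proof of a statement the paper leaves to the reader.
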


In particular, it follows that $Z_s\cap Z_t=Z_r$ if $sS\cap tS=rS$ for some $r$, and $Z_s\cap Z_t=\emptyset$ if $sS\cap tS=\emptyset$, which is also easy to see by definition.
Hence as a basis of topology on $\widehat{E(S)}$ we can take the cylinder sets
$$
Z_{s,F} := \{\chi \in \widehat{E(S)} \mid \chi(e_s)=1,\ \chi(e_t)=0 \text{ for all } t\in F\}=Z_s\setminus\bigcup_{t\in F}Z_t,
$$
where $s \in S$ and $F$ is a finite subset of $S$, which we will sometimes write as $F\ssubset S$. Furthermore, it is enough to consider $F\ssubset sS$.

\smallskip

Now, every scale $N\colon S \to [1,+\infty)$ gives rise to an $\R$-valued $1$-cocycle on $\CG(S)$ defined by
$$
c_N([\lambda_r^{\phantom{1}}\lambda_s^{-1},\chi]):=\log N(r)-\log N(s),
$$
which induces $\sigma^N$ on $C^*(S)\cong C^*(\CG(S))$ in the sense that
$$
\sigma^N_t(f)(g)=e^{itc_N(g)}f(g)\ \ \text{for}\ \ f\in C_c(\CG(S)),\ g\in\CG(S).
$$

For such dynamics we have the following description of KMS-states.

\begin{thm}\label{thm:KMS}
Suppose that  $c$ is a continuous $\R$-valued $1$-cocycle on a locally compact, not necessarily Hausdorff, second countable \'{e}tale groupoid $\CG$. Let $\sigma^c$ be the corresponding dynamics on~$C^*(\CG)$. Then, for every $\beta\in \R$, there exists a one-to-one correspondence between the $\sigma^c$-KMS$_\beta$-states on~$C^*(\CG)$ and the pairs $(\mu,\{\varphi_x\}_{x\in \CG^{(0)}})$ consisting of a probability measure $\mu$ on $\CG^{(0)}$ and a $\mu$-measurable field of tracial states~$\tau_x$ on $C^*(\CG^x_x)$ such that:
\begin{enumerate}
\item[(i)] $\mu$ is quasi-invariant with Radon--Nikodym cocycle $e^{-\beta c}$;
\item[(ii)] $\tau_x(u_g)=\tau_{r(h)}(u_{hgh^{-1}})$ for $\mu$-a.e.~$x$ and all $g\in \CG^x_x$ and $h\in \CG_x$.
\end{enumerate}
Namely, the state corresponding to $(\mu,\{\tau_x\}_x)$ is given~by
$$
\varphi(f)=\int_{\CG^{(0)}}\sum_{g\in \CG^x_x}f(g)\tau_x(u_g)d\mu(x)\ \ \hbox{for}\ \ f\in C_c(\CG).
$$
\end{thm}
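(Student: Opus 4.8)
The plan is to establish the asserted bijection in both directions, the bulk of the work lying in extracting the pair $(\mu,\{\tau_x\})$ from a given KMS-state. So suppose $\varphi$ is a $\sigma^c$-KMS$_\beta$-state on $C^*(\CG)$. First I would restrict $\varphi$ to the abelian subalgebra $C_0(\CG^{(0)})\subset C^*(\CG)$ and invoke the Riesz representation theorem to produce a probability measure $\mu$ on $\CG^{(0)}$, the normalization coming from $\varphi(1)=1$. All subsequent structure is built over this $\mu$.

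To obtain quasi-invariance (i), I would feed the KMS identity $\varphi(ab)=\varphi(b\,\sigma^c_{i\beta}(a))$ into elements supported on open bisections. For an open bisection $U$ the maps $r$ and $s$ restrict to homeomorphisms of $U$ onto $r(U),s(U)\subseteq\CG^{(0)}$, so we obtain a partial homeomorphism $\alpha_U:=r\circ(s|_U)^{-1}\colon s(U)\to r(U)$; moreover $\sigma^c_{i\beta}(\un_U)=e^{-\beta c}\un_U$. Testing the KMS identity with $a=\un_U$ and $b=h\,\un_U^*$ for $h\in C_0(\CG^{(0)})$ supported on $r(U)$, and evaluating both sides against the measure $\mu$, a direct computation yields
\[
\int_{r(U)}h\,d\mu=\int_{s(U)}(h\circ\alpha_U)\,e^{-\beta c_U}\,d\mu,
\]
where $c_U:=c\circ(s|_U)^{-1}$ is the cocycle transported along $U$. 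Since the bisections generate the topology of $\CG$, this is exactly the statement that $\mu$ is quasi-invariant with Radon--Nikodym cocycle $e^{-\beta c}$.

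The construction of the field $\{\tau_x\}$ is the heart of the argument and the step I expect to be the main obstacle, both because of the disintegration it requires and because of the non-Hausdorff nature of $\CG$. I would disintegrate $\varphi$ over $\mu$, using that each range fiber of $\CG$ is discrete so that for $f\in C_c(\CG)$ the set $\{g\in\CG^x_x:f(g)\neq0\}$ is finite, to produce a $\mu$-measurable assignment $x\mapsto\tau_x$ with $\varphi(f)=\int_{\CG^{(0)}}\sum_{g\in\CG^x_x}f(g)\,\tau_x(u_g)\,d\mu(x)$; rigorously this is a direct-integral decomposition of the GNS representation. Positivity and normalization of each $\tau_x$ descend from those of $\varphi$ together with the quasi-invariance just established. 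For the trace property I would apply the KMS identity to functions supported near the isotropy, using the standard fact that quasi-invariance forces $c$ to vanish on $\CG^x_x$ for $\mu$-a.e.\ $x$ (for a group over a point, quasi-invariance of the unique invariant mass already trivializes the cocycle, and the general case reduces to this); then $\sigma^c$ restricts trivially to each $C^*(\CG^x_x)$, KMS$_\beta$-states there are precisely tracial states, and the KMS identity collapses to $\tau_x(u_gu_h)=\tau_x(u_hu_g)$. Finally, condition (ii) records that the single global state $\varphi$ is compatible with the disintegration: transport along $h\in\CG_x$ conjugates $\CG^x_x$ onto $\CG^{r(h)}_{r(h)}$, and invariance of $\varphi$ forces $\tau_x(u_g)=\tau_{r(h)}(u_{hgh^{-1}})$ almost everywhere.

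For the converse I would start from a pair $(\mu,\{\tau_x\})$ satisfying (i) and (ii), define $\varphi$ by the displayed formula, and verify it is a KMS$_\beta$-state. Well-definedness and boundedness are immediate from $\|\tau_x\|=1$ and $\mu(\CG^{(0)})=1$. The KMS identity I would check by a direct computation on $f_1,f_2\in C_c(\CG)$: expanding $\varphi(f_1 f_2)$ through the convolution product, changing variables by means of the quasi-invariance relation (which supplies exactly the factor $e^{-\beta c}$ demanded by $\sigma^c_{i\beta}$), and then using the trace property of $\tau_x$ together with (ii) to reorganize the sum into $\varphi(f_2\,\sigma^c_{i\beta}(f_1))$. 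Positivity follows because, for $\mu$-a.e.\ $x$, the integrand $\sum_{g\in\CG^x_x}(f^* f)(g)\,\tau_x(u_g)$ is the value of the tracial state $\tau_x$ on a positive element of the completed isotropy algebra. This closes the correspondence.
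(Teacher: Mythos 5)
Your proposal is correct in outline and takes essentially the same route as the paper: the paper's entire proof of this theorem consists of citing \cite{Nes}*{Theorem~1.3} for the Hausdorff case and remarking that the argument there --- whose internals (the measure from $\varphi|_{C_0(\CG^{(0)})}$, quasi-invariance via the KMS identity tested on bisections, Renault's disintegration theorem to produce the measurable field of traces, and direct verification of the converse) are exactly what you reconstruct --- carries over to non-Hausdorff $\CG$ because the computations only involve functions supported on compact Hausdorff subsets and Renault's disintegration theorem remains valid in that setting~\cite{Ren2}. The non-Hausdorff subtlety that you flag but do not resolve is in fact the only content the paper's proof adds to the citation; beyond that, only a cosmetic slip: in your quasi-invariance computation $h$ should be supported on $s(U)$ rather than $r(U)$, so that $\un_U h \un_U^*=(h\circ\alpha_U^{-1})\un_{r(U)}$ with $\alpha_U=r\circ(s|_U)^{-1}$.
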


\bp
In the Hausdorff case this is~\cite{Nes}*{Theorem~1.3}. The result remains true in the non-Hausdorff case, with essentially the same proof, since all the computations in the proof need only functions that are nonzero on a compact Hausdorff subset, and since the main technical result used in the proof - Renault's disintegration theorem - is valid in the non-Hausdorff case as well, see~\cite{Ren2} or~\cite{MW}*{Theorem~7.8}.
\ep

Once we have a quasi-invariant probability measure $\mu$ on $\CG^{(0)}$ with Radon--Nikodym cocycle $e^{-\beta c}$, we have two natural choices for measurable fields of tracial states. One is to take the canonical trace on $C^*(\CG^x_x)$ for every $x$. We denote the corresponding KMS$_\beta$-state by $\varphi_\mu'$. The other possibility is to take the trivial character on $C^*(\CG^x_x)$ for every $x$. We denote the corresponding KMS$_\beta$-state by $\varphi_\mu''$. Thus, for $f\in C_c(\CG)$,
\begin{equation}\label{eq:2states 1}
\varphi'_\mu(f)=\int_{\CG^{(0)}}f(x)d\mu(x),
\end{equation}
while
\begin{equation}\label{eq:2states 2}
\varphi''_\mu(f)=\int_{\CG^{(0)}}\sum_{g\in \CG^x_x}f(g)d\mu(x).
\end{equation}
It is clear that these two states coincide if and only if the $\mu$-measure of points with nontrivial isotropy groups $\CG^x_x$ is zero. Hence we get the following result.

\begin{corollary}\label{cor:uniqueKMS}
In the setting of Theorem~\ref{thm:KMS}, a quasi-invariant probability measure $\mu$ on $\CG^{(0)}$ with Radon--Nikodym cocycle $e^{-\beta c}$ corresponds to a unique $\sigma^c$-KMS$_\beta$-state if and only if $\mu$-almost all points have trivial isotropy.

In particular, $C^*(\CG)$ has a unique $\sigma^c$-KMS$_\beta$-state if and only if there is a unique quasi-invariant probability measure $\mu$ on $\CG^{(0)}$ with Radon--Nikodym cocycle $e^{-\beta c}$ and $\mu$-almost all points have trivial isotropy.
\end{corollary}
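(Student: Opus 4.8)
The plan is to read off both statements directly from the parametrization in Theorem~\ref{thm:KMS}, together with the two explicit states $\varphi'_\mu$ and $\varphi''_\mu$ already introduced. The organizing observation is that, for a \emph{fixed} quasi-invariant $\mu$ with Radon--Nikodym cocycle $e^{-\beta c}$, the KMS$_\beta$-states whose associated measure is $\mu$ are, by Theorem~\ref{thm:KMS}, in bijection with the $\mu$-a.e.\ equivalence classes of measurable fields of tracial states $\{\tau_x\}$ satisfying condition~(ii). Thus ``$\mu$ corresponds to a unique state'' means precisely ``such a field is unique modulo $\mu$-null sets.''

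For the first assertion I would argue both directions. If $\mu$-almost every $x$ has trivial isotropy, then $\CG^x_x$ is the trivial group for $\mu$-a.e.\ $x$, so $C^*(\CG^x_x)=\C$ carries a single tracial state; hence the field $\{\tau_x\}$ is forced $\mu$-a.e., and $\mu$ corresponds to exactly one state. Conversely, suppose the set of points with nontrivial isotropy has positive $\mu$-measure. Then the canonical-trace field and the trivial-character field differ on a non-null set, and I would verify that both satisfy~(ii): for the canonical trace $\tau_x(u_g)=\delta_{g,x}$ the conjugation $g\mapsto hgh^{-1}$ preserves the unit, and for the trivial character $\tau_x(u_g)=1$ condition~(ii) is automatic. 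By \eqref{eq:2states 1} and \eqref{eq:2states 2} these fields yield $\varphi'_\mu\ne\varphi''_\mu$, so $\mu$ corresponds to at least two distinct states.

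For the ``in particular'' part I would first record that the measure is recoverable from its state: evaluating the defining formula of Theorem~\ref{thm:KMS} on $f\in C_c(\CG^{(0)})$ leaves only the unit term $g=x$, giving $\varphi(f)=\int_{\CG^{(0)}}f\,d\mu$, so $\mu$ is determined by the restriction of $\varphi$ to $C_0(\CG^{(0)})$. For the backward implication, if there is a unique such $\mu$ and $\mu$-a.e.\ isotropy is trivial, then every KMS$_\beta$-state has associated measure $\mu$, and by the first part $\mu$ corresponds to a single state, whence the KMS$_\beta$-state is unique. For the forward implication, let $\varphi$ be the unique KMS$_\beta$-state and $\mu$ its measure; any other quasi-invariant $\mu'$ with cocycle $e^{-\beta c}$ would produce a KMS$_\beta$-state (for instance $\varphi'_{\mu'}$), which must equal $\varphi$, forcing $\mu'=\mu$ by recoverability, and since $\mu$ then corresponds to the unique state $\varphi$, the first part yields $\mu$-a.e.\ trivial isotropy.

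The argument is essentially bookkeeping built on Theorem~\ref{thm:KMS}; the only points requiring genuine care are checking that the two candidate fields both satisfy the equivariance condition~(ii) and confirming that the measure is determined by the state. Both are straightforward, so I do not expect a serious obstacle here.
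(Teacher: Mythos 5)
Your proposal is correct and takes essentially the same approach as the paper: the corollary there is deduced from the discussion immediately preceding it, which uses exactly your two fields (canonical trace and trivial character) to produce the states $\varphi'_\mu$ and $\varphi''_\mu$ of \eqref{eq:2states 1} and \eqref{eq:2states 2}, notes they coincide precisely when $\mu$-almost all isotropy groups are trivial, and combines this with the bijective correspondence of Theorem~\ref{thm:KMS}. The details you spell out---checking condition~(ii) for both fields and recovering $\mu$ from the state by evaluating on $C_c(\CG^{(0)})$---are left implicit in the paper but are the right verifications.
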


\begin{remark}
When $\CG$ is Hausdorff, the state $\varphi'_\mu$ is the composition of the state on $C_0(\CG^{(0)})$ defined by $\mu$ with the canonical conditional expectation $C^*(\CG)\to C_0(\CG^{(0)})$, $C_c(\CG)\ni f\mapsto f|_{\CG^{(0)}}$. When $\CG$ is non-Hausdorff, this conditional expectation may not exist, since $f|_{\CG^{(0)}}$ is not necessarily continuous for $f\in C_c(\CG)$.
\end{remark}

For the groupoid $\CG(S)$ associated with a right LCM semigroup $S$ and the cocycle $c_N$ defined by a scale $N$ on $S$, condition (i) in Theorem~\ref{thm:KMS} means simply that
\begin{equation}\label{eq:scaling}
\mu(\lambda_s\cdot A)=N(s)^{-\beta}\mu(A)
\end{equation}
for all Borel subsets $A\subset \CG(S)^{(0)}=\widehat{E(S)}$ and $s\in S$.

\smallskip

We will start analyzing condition~\eqref{eq:scaling} in the next section. We want to finish this section by pointing out that although Theorem~\ref{thm:KMS} requires second countability, all our main results on the KMS-states on $C^*(S)$ have formulations independent of the groupoid picture and can be generalized to uncountable right LCM semigroups. We leave details of such a generalization to the interested reader and confine ourselves to outlining a general strategy.

A submonoid $T$ of a right LCM semigroup $S$ that is a right LCM semigroup satisfying
\begin{equation*}\label{eq:ideals in submonoid}
(sT\cap tT)S = sS \cap tS\ \ \text{for all}\ \ s,t \in T
\end{equation*}
will be called a \emph{right LCM submonoid} (of $S$).
We then have a canonical homomorphism $C^*(T)\to C^*(S)$. In general this homomorphism may not be injective. But we will show in Corollary~\ref{cor:honest subalgebra} that this canonical map is injective if the right LCM submonoid $T\subset S$ is also \emph{hereditary}, that is, whenever $sS$ contains an element of $T$ for some $s\in S$, we must have $s\in T$.

\begin{remark}\label{rem:hereditary}
If $T\subset S$ is a  hereditary LCM submonoid, then $S^*\subset T$, and whenever $st\in T$ for some $s,t\in S$, we must have $s,t\in T$. The first property is immediate, as $e\in T$ and $T$ is hereditary. Next, if $st\in T$, then $s\in T$. But then the equality $stS\cap sS=stS$ implies that $stT\cap sT=stuT$ for some $u\in S^*$, since $T$ is a right LCM submonoid of $S$. It follows that $st\in sT$, hence $t\in T$ by left cancellation.
\end{remark}

For any right LCM semigroup $S$ we can construct an increasing net $(S_i)_i$ of countable right LCM submonoids with union $S$ as follows. Take a countable submonoid $X\subset S$. For every pair of elements $s,t\in X$ such that $sS\cap tS\ne\emptyset$ pick elements $a_{s,t},b_{s,t}\in S$ such that $sa_{s,t}=tb_{s,t}$ and $sS\cap tS=sa_{s,t}S$. Then, for each $c\in sX\cap tX$, we have $c=sa_{s,t}d_c$ for some $d_c\in S$. Consider the new submonoid generated by $X$ and the elements $a_{s,t}$, $b_{s,t}$ and $d_c$ for all $s,t\in X$ and $c\in sX\cap tX$. Repeat the same procedure for the new submonoid, but keep the same elements $a_{s,t}$ and $b_{s,t}$ whenever they are already defined. By repeating this procedure we get an increasing sequence of countable submonoids of $S$. Denote by $S_X$ its union. Then $S_X$ is a countable right LCM submonoid of $S$. By varying $X$ we get the required net of submonoids.

Note that we might want to use a more elaborate procedure for constructing $S_X$. For example, in the next section we will consider an equivalence relation $\sim_N$ on $S$ arising from a scale $N$. In this case, we may throw in additional elements of $\ker N$ at every stage of our construction of $S_X$ to make sure that the equivalence relation on $S_X$ defined by $N|_{S_X}$ is the restriction of $\sim_N$.

Once a net $(S_i)_i$ is constructed, the C$^*$-algebra $C^*(S)$ becomes the inductive limit of $C^*(S_i)$, and this allows one to extend results from countable right LCM semigroups to arbitrary ones by passing to a limit. From now on we will therefore often assume that $S$ is countable.

\section{Scales and quasi-invariant measures}\label{sec:measures}

Let $S$ be a countable right LCM semigroup and $N\colon S\to[1,+\infty)$ be a scale. Our goal is to understand condition~\eqref{eq:scaling} in combinatorial terms.

\smallskip

We start by identifying simple necessary conditions for the existence of a quasi-invariant probability measure $\mu$ with Radon--Nikodym cocycle $e^{-\beta c_N}$. Denote by $\ker N$ the ``naive'' kernel $N^{-1}(1)$ of $N$.

\begin{lemma} \label{lem:neccessary}
For any nontrivial scale $N$ and $\beta\in\R$, necessary conditions for the existence of a probability measure $\mu$ satisfying~\eqref{eq:scaling} (and hence for the existence of a $\sigma^N$-KMS$_\beta$-state on~$C^*(S)$) are that $\beta=0$ or that $\beta>0$ and
\begin{equation}\label{eq:admis}
\text{for any}\ \ s\in S,\ t\in\ker N,\ \ \text{we have}\ \ sS\cap tS=rS\ \ \text{for some}\ r\in s(\ker N).
\end{equation}
\end{lemma}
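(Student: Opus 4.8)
The plan is to work entirely with the measures of the cylinder sets $Z_s$ and to translate everything into semigroup combinatorics via Lemma~\ref{lem:lattice-set}. The starting observation is that $\lambda_s$ acts on the unit space $\widehat{E(S)}=\CG(S)^{(0)}$ as a bijection onto $Z_s$ with full domain, since the source idempotent $\lambda_s^{-1}\lambda_s$ of $\lambda_s$ is the identity of $I_\ell(S)$ while the range idempotent $\lambda_s^{\phantom{1}}\lambda_s^{-1}=e_s$. Hence, taking $A=\widehat{E(S)}$ in the scaling relation~\eqref{eq:scaling} yields
$$
\mu(Z_s)=N(s)^{-\beta}\quad\text{for every }s\in S,
$$
and everything below is extracted from this single identity.

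First I would dispose of the sign of $\beta$. Since $N$ is nontrivial there is $s_0\in S$ with $N(s_0)>1$; as $Z_{s_0}\subset\widehat{E(S)}$ and $\mu$ is a probability measure, we get $N(s_0)^{-\beta}=\mu(Z_{s_0})\le 1$, which forces $\beta\ge0$. So either $\beta=0$, or $\beta>0$, in which case it remains to verify~\eqref{eq:admis}.

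Assume now $\beta>0$ and fix $s\in S$ and $t\in\ker N$. The key leverage is that $\mu(Z_t)=N(t)^{-\beta}=1$, i.e.\ $Z_t$ is conull. If we had $sS\cap tS=\emptyset$, then $Z_s\cap Z_t=\emptyset$ by Lemma~\ref{lem:lattice-set}, and conullity of $Z_t$ would give $\mu(Z_s)=\mu(Z_s\cap Z_t)=0$, contradicting $\mu(Z_s)=N(s)^{-\beta}>0$. Hence $sS\cap tS=rS$ for some $r\in S$, and again by Lemma~\ref{lem:lattice-set} we have $Z_r=Z_s\cap Z_t$. Using conullity of $Z_t$ once more, $\mu(Z_r)=\mu(Z_s\cap Z_t)=\mu(Z_s)$, that is $N(r)^{-\beta}=N(s)^{-\beta}$. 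Since $\beta>0$ and $x\mapsto x^{-\beta}$ is injective on $[1,+\infty)$, this gives $N(r)=N(s)$. Finally, $r\in sS$, so $r=s(s^{-1}r)$ with $N(s)N(s^{-1}r)=N(r)=N(s)$, whence $N(s^{-1}r)=1$, i.e.\ $s^{-1}r\in\ker N$ and $r\in s(\ker N)$, as required.

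I do not expect a genuine obstacle here: the argument is essentially bookkeeping of measures of cylinder sets. The one point that deserves care is the bridge between the measure-theoretic statement on $\widehat{E(S)}$ and the combinatorial condition~\eqref{eq:admis}, which is exactly what Lemma~\ref{lem:lattice-set} supplies through the identities $Z_s\cap Z_t=Z_r$ and $Z_s\cap Z_t=\emptyset$. The conceptual heart is simply the realization that $t\in\ker N$ makes $Z_t$ conull, after which both the nonemptiness of $sS\cap tS$ and the equality $N(r)=N(s)$ fall out of comparing measures.
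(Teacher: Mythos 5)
Your proof is correct and follows essentially the same route as the paper's: both extract $\mu(Z_s)=N(s)^{-\beta}$ from the scaling relation via $\lambda_s\cdot Z_e=Z_s$, deduce $\beta\ge0$ from nontriviality of $N$, and then use conullity of $Z_t$ for $t\in\ker N$ together with Lemma~\ref{lem:lattice-set} to conclude $sS\cap tS=rS$ with $N(r)^{-\beta}=N(s)^{-\beta}$, hence $r\in s(\ker N)$. The only difference is that you make explicit two steps the paper leaves implicit, namely ruling out $sS\cap tS=\emptyset$ via $\mu(Z_s)>0$ and the factorization $r=s(s^{-1}r)$ with $N(s^{-1}r)=1$.
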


\bp
For any $s\in S$, we have $\lambda_s\cdot Z_e=Z_s$, hence
$\mu(Z_s)=N(s)^{-\beta}$. As $N$ is assumed to be nontrivial and we must have $\mu(Z_s)\le1$ for all $s$, this implies that $\beta\ge0$.

Assume now that $\beta>0$ and take $s\in S$, $t\in\ker N$. Then $\mu(Z_t)=1$, hence
$$
\mu(Z_s\cap Z_t)=\mu(Z_s)=N(s)^{-\beta}.
$$
By the definition of the sets $Z_r$ or by Lemma~\ref{lem:lattice-set}, it follows that $sS\cap tS=rS$ for some $r$ such that $N(r)^{-\beta}=N(s)^{-\beta}$. Hence $r\in s(\ker N)$.
\ep

Recall from~\cites{Star,CrispLaca} that
$$
S_c:= \{a \in S\mid aS\cap sS\ne\emptyset\ \ \forall s \in S\}
$$
is called the \emph{core} subsemigroup of $S$. Condition~\eqref{eq:admis} implies that $\ker N$ is a right LCM submonoid of $S$ contained in $S_c$. This submonoid is clearly hereditary.

\begin{remark}\label{rem:admis}
A formally weaker, but nevertheless equivalent to~\eqref{eq:admis}, condition is that
\begin{equation}\label{eq:admis2}
s(\ker N)\cap tS\ne\emptyset\ \ \text{for all}\ \ s\in S\ \text{and}\ t\in\ker N.
\end{equation}
Indeed, if \eqref{eq:admis2} is satisfied, then $sS\cap tS=rS\ni sa$ for some $r\in S$ and $a\in\ker N$. Applying $N$ we get $N(s)\le N(r)\le N(sa)=N(s)$, hence $r\in s(\ker N)$, that is, \eqref{eq:admis} holds.
\end{remark}

The following lemma allows one to easily verify condition~\eqref{eq:admis} in a number of examples.

Let us first introduce some notation. Given a submonoid $T\subset S$, we have a partial order on the set $T/T^*$ defined by divisibility in $T$: $t_1T^*$ is larger than $t_2T^*$ iff $t_1T\subset t_2T$. If $T/T^*$ happens to be directed, or in other words, $t_1T\cap t_2T\ne\emptyset$ for any $t_1,t_2\in T$, then we can define an equivalence relation $\sim_T$ on $S$ by
\begin{equation}\label{eq:equiv}
s\sim_T t\ \ \text{iff}\ \ \exists\ a,b\in T:\ sa=tb.
\end{equation}
When $T=\ker N$ and $(\ker N)/(\ker N)^*$ is directed, we will write $\sim_N$ instead of $\sim_{\ker N}$ and denote by $[s]$ the equivalence class of $s\in S$.

\begin{lemma}\label{lem:finite-level}
Assume a scale $N$ on $S$ is such that $(\ker N)/(\ker N)^*$ is directed and the sets $N^{-1}(\lambda)/{\sim_N}$ are finite for all $\lambda\in N(S)$. Then $N$ satisfies condition~\eqref{eq:admis}.
\end{lemma}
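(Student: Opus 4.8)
The plan is to verify the equivalent reformulation~\eqref{eq:admis2} from Remark~\ref{rem:admis}, namely that $s(\ker N)\cap tS\ne\emptyset$ for every $s\in S$ and $t\in\ker N$. This formulation is the convenient one to aim at, since it only requires producing \emph{some} common element at the correct $N$-level rather than controlling the whole intersection $sS\cap tS$. Note first that the directedness of $(\ker N)/(\ker N)^*$ is exactly what is needed for $\sim_N$ to be a genuine equivalence relation (transitivity uses it), so the hypothesis that $N^{-1}(\lambda)/{\sim_N}$ is finite is meaningful.

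The key observation is that iterating $t$ on the left of $s$ keeps us at a single level. Indeed, since $N(t)=1$, we have $N(t^n s)=N(t)^nN(s)=N(s)$ for all $n\ge0$, so the whole sequence $s, ts, t^2s,\dots$ lies in $N^{-1}(N(s))$, and the same holds for any element $\sim_N$-equivalent to one of them. Hence all the classes $[t^ns]$ belong to the finite set $N^{-1}(N(s))/{\sim_N}$. By the pigeonhole principle the infinitely many classes $[t^ns]$ cannot be pairwise distinct, so $t^ms\sim_N t^ns$ for some $0\le m<n$. Unwinding the definition~\eqref{eq:equiv}, there are $a,b\in\ker N$ with $t^msa=t^nsb$. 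Writing the right-hand side as $t^m(t^{n-m}sb)$ and cancelling the common prefix $t^m$ by left cancellation, I obtain $sa=t^{n-m}sb$. Since $a\in\ker N$, the left-hand side lies in $s(\ker N)$; since $n-m\ge1$, the right-hand side lies in $tS$. Thus $sa\in s(\ker N)\cap tS$, which is~\eqref{eq:admis2}, and hence~\eqref{eq:admis} by Remark~\ref{rem:admis}.

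I do not expect a serious obstacle here: once one decides to iterate $t$ on the left and feed the resulting sequence into the finiteness hypothesis, the conclusion follows purely from left cancellation. The only two points needing care are checking that $t^ns$ genuinely stays at level $N(s)$ so that finiteness applies, and translating the relation $t^ms\sim_N t^ns$ correctly before cancelling $t^m$ — the multiplications witnessing $\sim_N$ occur on the right by elements of $\ker N$, which is precisely what places the resulting element $sa$ in $s(\ker N)$.
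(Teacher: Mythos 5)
Your proof is correct. The paper argues more abstractly: it observes that $S$ acts on $S/{\sim_N}$ by injective maps with $s[t]=[st]$ (injectivity coming from left cancellation), that $\ker N$ preserves each finite level set $N^{-1}(\lambda)/{\sim_N}$, and hence that $\ker N$ acts \emph{bijectively}, so that $[s]=t[r]=[tr]$ for some $r$, i.e.\ $s\sim_N tr$, which gives~\eqref{eq:admis2} directly. Your pigeonhole on the forward orbit $[s],[ts],[t^2s],\dots$ is essentially the orbit-wise proof of the same fact (an injective self-map of a finite set is surjective), but your implementation is genuinely more elementary: you never need to check that the action on classes is well defined or injective, since you work with actual semigroup elements, unwind $t^ms\sim_N t^ns$ into $t^msa=t^nsb$ with $a,b\in\ker N$, and cancel $t^m$ on the left at the element level. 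What the paper's formulation buys in exchange is reusability --- the bijectivity of the $\ker N$-action on $S/{\sim_N}$ is exactly what is invoked again in Lemma~\ref{lem:action} --- whereas your argument is self-contained but single-use. All your auxiliary checks are sound: $\sim_N$ preserves $N$-levels (if $sa=s'b$ with $a,b\in\ker N$ then $N(s)=N(s')$), directedness of $(\ker N)/(\ker N)^*$ is indeed what makes $\sim_N$ transitive, and $n-m\ge1$ correctly places $sa=t^{n-m}sb$ in $tS$.
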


\bp
As $S$ is left cancellative, it is easy to see that $S$ acts on $S/{\sim_N}$ by injective maps such that $s[t]=[st]$. Since $\ker N$ preserves the finite sets $N^{-1}(\lambda)/{\sim_N}$, it follows that $\ker N$ acts by bijective maps. In other words, for any $s\in S$ and $t\in\ker N$ we have $s\sim_N tr$ for some $r$. But this means that $s(\ker N)\cap tS\ne\emptyset$, which is equivalent to condition~\eqref{eq:admis} by Remark~\ref{rem:admis}.
\ep

\emph{From now on we assume that $N$ satisfies condition~\eqref{eq:admis}.}

\smallskip

Then $(\ker N)/(\ker N)^*$ is directed, so that $\sim_N$ is well-defined.

\begin{lemma}
Assume we have $sS\cap tS=rS$ for some $r,s,t\in S$. Then, for any $t'\sim_N t$, we have $sS\cap t'S=r'S$ for some $r'\sim_N r$.
\end{lemma}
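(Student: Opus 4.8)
The plan is to isolate a one-step version of the claim and then iterate it. Concretely, I would first prove the following helper statement: \emph{if $sS\cap tS=rS$ and $a\in\ker N$, then $sS\cap taS=r_aS$ for some $r_a\sim_N r$.} Granting this, the general case follows because $t'\sim_N t$ supplies $a,b\in\ker N$ with $ta=t'b$, so that I can travel from $t$ to $t'$ through the common element $ta=t'b$ and apply the helper on each side.

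For the helper, write $r=tp$ with $p:=t^{-1}r$, so that $rS=tpS$. Since $taS\subseteq tS$ we have $sS\cap taS=(sS\cap tS)\cap taS=rS\cap taS=tpS\cap taS$, and left cancellation gives the factorization
$$
tpS\cap taS=t\,(pS\cap aS).
$$
Now $a\in\ker N\subseteq S_c$, so $pS\cap aS\neq\emptyset$, and condition~\eqref{eq:admis} applied to the pair $(p,a)$ yields $pS\cap aS=r_0S$ with $r_0\in p(\ker N)$, say $r_0=pk$ for some $k\in\ker N$. Hence $sS\cap taS=tr_0S$ is principal with generator $tr_0=tpk=rk$, and since $k\in\ker N$ this means $tr_0\sim_N r$, proving the helper.

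For the passage from the helper to the statement, fix $a,b\in\ker N$ with $ta=t'b$. Applying the helper to $sS\cap tS=rS$ and $a$ gives $sS\cap taS=r_1S$ with $r_1\sim_N r$; in particular this intersection is nonempty, and since $taS=t'bS\subseteq t'S$ it follows that $sS\cap t'S$ is nonempty, hence equal to some $r''S$ by the right LCM property. Applying the helper again, this time to $sS\cap t'S=r''S$ and $b$, gives $sS\cap t'bS=r_2S$ with $r_2\sim_N r''$. But $t'bS=taS$, so $r_1S=r_2S$. Two generators of the same principal right ideal differ by a unit, and every unit lies in $\ker N$ (because $N\ge1$ forces $N(w)=N(w^{-1})=1$ for $w\in S^*$); therefore $r_1\sim_N r_2$, and chaining the equivalences gives $r''\sim_N r_2\sim_N r_1\sim_N r$, as required.

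The one genuinely substantive point is the helper, and within it the factorization identity together with the invocation of~\eqref{eq:admis}; everything else is bookkeeping of the relation $\sim_N$. The step I expect to be easiest to overlook is the \emph{reverse} direction, namely that removing the $\ker N$-factor $b$ from $t'b$ back to $t'$ again preserves the $\sim_N$-class of the generator. This is precisely why the helper must be applied a second time and why the observation $S^*\subseteq\ker N$ is needed.
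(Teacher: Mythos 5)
Your proof is correct and takes essentially the same route as the paper: your helper is precisely the paper's computation (write $r=tp$, apply condition~\eqref{eq:admis} to $pS\cap aS$, and factor out $t$ by left cancellation), and your passage through the common element $ta=t'b$ is exactly the paper's reduction ``it suffices to prove the lemma for the pairs $(t,ta)$ and $(t',t'b)$''. You merely spell out what the paper leaves implicit, namely the second application of the one-step case and the observation that two generators of the same principal right ideal differ by a unit lying in $\ker N$.
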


\bp
By assumption we have $ta=t'b$ for some $a,b\in\ker N$. It suffices to prove the lemma for the pairs $(t,ta)$ and $(t',t'b)$ instead of $(t,t')$. Therefore we may assume that $t'=ta$.

We have $r=tx$ for some $x\in S$. Then by condition~\eqref{eq:admis} we have $xS\cap aS=x'S$ for some $x'\sim_N x$. Hence $tx'\sim_N tx=r$ and
$$
tx'S=txS\cap taS=sS\cap tS\cap taS=sS\cap t'S,
$$
so that we can take $r'=tx'$.
\ep

This lemma implies that we have a well-defined binary relation $\le$ on $S/{\sim_N}$ given by
\begin{equation}\label{eq:order}
[s]\le[t]\ \ \text{iff}\ \ sS\cap tS=t'S\ \ \text{for some}\ \ t'\sim_Nt,
\end{equation}
or equivalently, $[s]\le[t]$ iff $sS\supset t'S$ for some $t'\sim_N t$.

\begin{lemma}\label{lem:quasi-lattice}
The pair $(S/{\sim_N},\le)$ is a quasi-lattice, that is, a partially ordered set such that any two elements $[s]$ and $[t]$ either have a least upper bound $[s]\vee[t]$ or do not have a common upper bound at all, in which case we write $[s]\vee[t]=\infty$. Furthermore,
$$
[s]\vee[t]=\begin{cases}[r], &\text{if}\ \ sS\cap tS=rS,\\ \infty,&\text{if}\ \ sS\cap tS=\emptyset.\end{cases}
$$
\end{lemma}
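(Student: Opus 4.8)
The plan is to check that $\le$ satisfies the three partial-order axioms and then to establish the quasi-lattice property together with the displayed formula, treating separately the two cases of the right LCM dichotomy $sS\cap tS=rS$ and $sS\cap tS=\emptyset$. Throughout I would use the equivalent description ``$[s]\le[t]$ iff $sS\supset t'S$ for some $t'\sim_N t$'', which is best suited to chaining inclusions, and I would freely invoke that $\le$ is already well defined on $\sim_N$-classes (the preceding lemma) and that every element of $N$-value $1$ — in particular every unit of $S$ — lies in $\ker N$.

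Reflexivity is immediate from $sS\cap sS=sS$. For transitivity I would start from $sS\supset t'S$ with $t'\sim_N t$ and $tS\supset u'S$ with $u'\sim_N u$; well-definedness applied to $[t']=[t]$ upgrades the second inclusion to $t'S\supset u''S$ for some $u''\sim_N u$, and then $sS\supset t'S\supset u''S$ yields $[s]\le[u]$. The only axiom requiring an actual computation is antisymmetry: from $[s]\le[t]$ and $[t]\le[s]$ I would write $t'=sc$ and $s'=td$ with $t'\sim_N t$, $s'\sim_N s$ and $c,d\in S$; applying $N$ and using $N(t')=N(t)$, $N(s')=N(s)$ together with $N(c),N(d)\ge 1$ forces $N(s)=N(t)$ and hence $c,d\in\ker N$, so that $t'=sc\sim_N s$ and $[s]=[t']=[t]$.

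For the join, the main case is $sS\cap tS=rS$, where I would claim $[s]\vee[t]=[r]$. That $[r]$ is a common upper bound is clear from $rS\subset sS$ and $rS\subset tS$. To prove it least, I would take an arbitrary common upper bound $[u]$, giving $sS\supset u'S$ and $tS\supset u''S$ with $u',u''\sim_N u$; since $u'\sim_N u''$ there are $a,b\in\ker N$ with $w:=u'a=u''b$, and then $w\in u'S\cap u''S\subset sS\cap tS=rS$ while $w\sim_N u'\sim_N u$. Hence $rS\supset wS$ with $w\sim_N u$, i.e. $[r]\le[u]$, as required; uniqueness of least upper bounds then also makes the value $[r]$ independent of the chosen representatives.

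In the remaining case $sS\cap tS=\emptyset$ I would show no common upper bound exists, so $[s]\vee[t]=\infty$: the very same construction applied to a hypothetical bound $[u]$ produces $w\in u'S\cap u''S\subset sS\cap tS=\emptyset$, a contradiction. I expect the least-upper-bound step of the previous paragraph to be the crux: one must convert the two separate dominations $sS\supset u'S$ and $tS\supset u''S$ into a single element of $sS\cap tS$, and the decisive idea is to exploit $u'\sim_N u''$ (the directedness of $\ker N$) to manufacture a common element $w=u'a=u''b$ that still lies in the class $[u]$ because $a\in\ker N$; everything else is elementary inclusion- and $N$-value bookkeeping.
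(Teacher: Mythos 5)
Your proof is correct and follows essentially the same route as the paper's: reflexivity, transitivity by using well-definedness of $\le$ to chain inclusions, antisymmetry by applying $N$ to force the connecting factors into $\ker N$, and the join formula with the empty-intersection case handled by contradiction. The only (harmless) variation is in the least-upper-bound step, where the paper replaces the common upper bound $x$ by $x'''$ with $x'S\cap x''S=x'''S$ via the preceding intersection lemma, whereas you manufacture a single element $w=u'a=u''b\in u'S\cap u''S\subset rS$ directly from the definition of $\sim_N$ --- a slightly more economical rendering of the same idea.
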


\bp
The reflexivity is obvious. To show antisymmetry, assume $[s]\le[t]\le[s]$. Then $sS\supset t'S\supset s'S$ for some $s'\sim_N s$ and $t'\sim_N t$. We have $t'=sa$ and $s'=t'b$ for some $a,b\in S$, hence $s'=sab$. Applying $N$ we conclude that $a,b\in\ker N$, so that $t\sim_N t'\sim_N s$.

To show transitivity, assume $[r]\le[s]\le[t]$. By replacing first $s$ by an equivalent element to get $rS\supset sS$, and then~$t$ to get $sS\supset tS$, we may assume that $rS\supset sS\supset tS$, in which case we clearly have $[r]\le[t]$. Therefore $\le$ is a partial order on $S/{\sim_N}$.

Next, we have to show that if $[s]\le[x]$ and $[t]\le[x]$ for some $s,t,x\in S$, then $sS\cap tS =rS$ for some $r$ and $[r]\le[x]$. We have $sS\supset x'S$ and $tS\supset x''S$ for some $x'$ and $x''$ equivalent to~$x$. But then $x'S\cap x''S=x'''S$ for some $x'''\sim_N x$. Therefore by replacing $x$ by $x'''$ we may assume that $sS\cap tS\supset xS$, in which case the existence of $r$ becomes obvious.
\ep

\begin{lemma}\label{lem:action}
We have an action of $S$ on $S/{\sim_N}$ by injective order-preserving maps such that $s[t]=[st]$. The elements of $\ker N$ act by bijective maps.
\end{lemma}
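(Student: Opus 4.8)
The plan is to verify the four assertions about a fixed $s\in S$ in turn—well-definedness of $[t]\mapsto[st]$, the action axioms, injectivity, and order-preservation—and then to handle the bijectivity of the maps coming from $\ker N$ separately, since that is the only place where the standing assumption~\eqref{eq:admis} does real work. Most of this repeats the remark ``it is easy to see that $S$ acts on $S/{\sim_N}$ by injective maps'' made in the proof of Lemma~\ref{lem:finite-level}, so I would keep those steps brief.

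First I would check that $[t]\mapsto[st]$ is well defined. If $t\sim_N t'$, then $ta=t'b$ for some $a,b\in\ker N$, and left-multiplying by $s$ gives $(st)a=(st')b$, so $st\sim_N st'$. The action axioms $e[t]=[t]$ and $s_1(s_2[t])=(s_1s_2)[t]$ are then immediate from $s[t]=[st]$ and associativity in $S$. For injectivity, suppose $[st_1]=[st_2]$, i.e.\ $st_1a=st_2b$ for some $a,b\in\ker N$; left cancellation in $S$ yields $t_1a=t_2b$, hence $[t_1]=[t_2]$.

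Next I would show each map is order-preserving. Using the second characterization in~\eqref{eq:order}, the relation $[t_1]\le[t_2]$ means $t_1S\supset t_2'S$ for some $t_2'\sim_N t_2$. Applying the map $x\mapsto sx$, which sends $uS$ to $(su)S$ and preserves inclusions, gives $(st_1)S\supset(st_2')S$. Since $st_2'\sim_N st_2$ by the well-definedness computation above, this says precisely $[st_1]\le[st_2]$, as required.

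The main point is the bijectivity of the maps induced by $t\in\ker N$. Injectivity is already covered, so I only need surjectivity, and this is the one step where the hypotheses enter nontrivially: I must produce, for an arbitrary $y\in S$, a preimage of $[y]$ under the map induced by $t$. Here I would invoke condition~\eqref{eq:admis2} (equivalent to~\eqref{eq:admis} by Remark~\ref{rem:admis}) with $s$ replaced by $y$, which yields $x\in S$ and $a\in\ker N$ with $ya=tx$. Then $tx=ya\sim_N y$, so $t[x]=[tx]=[y]$, establishing surjectivity and hence bijectivity. I expect no genuine obstacle beyond correctly matching the quantifiers in~\eqref{eq:admis2} to the surjectivity statement; the content is entirely contained in that assumption.
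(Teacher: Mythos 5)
Your proposal is correct and takes essentially the same approach as the paper: the paper treats the first statement as immediate (it was noted in the proof of Lemma~\ref{lem:finite-level}) and, for surjectivity of the action of an element of $\ker N$, uses condition~\eqref{eq:admis} to write $aS\cap sS=s'S$ with $s'=at\sim_N s$, which is exactly what your appeal to the equivalent condition~\eqref{eq:admis2} accomplishes via $ya=tx\sim_N y$. The extra details you supply (well-definedness, injectivity via left cancellation, order-preservation via the second characterization in~\eqref{eq:order}) are all correct fillings-in of what the paper leaves implicit.
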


\bp
The first statement is immediate (and was already observed in the proof of Lemma~\ref{lem:finite-level}). In order to prove the second one, take $a\in\ker N$ and $s\in S$. Then $aS\cap sS=s'S$ for some $s'\sim_N s$. Write $s'$ as $s'=at$. Then $a[t]=[s]$.
\ep

\begin{example}\label{ex:ax+b}
Consider the right LCM semigroup $S=\Z_+\rtimes\N$ studied in~\cite{LR}, so the elements of $S$ are pairs $(c,n)$ of integers such that $c\ge0$ and $n\ge1$, and the product is given by $(c,n)(d,m)=(c+nd,nm)$.

Define a scale $N$ by $N(c,n)=n$. As $\Z_+$ is directed, the equivalence relation $\sim_N$ is well-defined. It is easy to check that
$$
(c,n)\sim_N(d,m)\ \ \text{if and only if}\ \ n=m\ \text{and}\ \ c\equiv d\mod n.
$$
Therefore $N^{-1}(n)/{\sim_N}$ can be identified with $\Z/n\Z$, so $S/{\sim_N}$ is the disjoint union $\bigsqcup^\infty_{n=1}\Z/n\Z$.
It is then not difficult to show that if $[c]\in \Z/n\Z$ and $[d]\in\Z/m\Z$, then
$$
[c]\le[d]\ \ \text{if and only if}\ \ n|m\ \text{and}\ \ c\equiv d\mod n.
$$

The action of $S$ on $\bigsqcup^\infty_{n=1}\Z/n\Z$ is given as follows: an element $(c,n)$ maps $[d]\in\Z/m\Z$ into $[c+nd]\in\Z/nm\Z$.
\hfill$\diamondsuit$
\end{example}

Let us introduce the following notation. For a finite subset $K\subset S/{\sim_N}$ we put
$$
[q_K]:=\bigvee_{[s]\in K}[s]\in (S/{\sim_N})\cup\{\infty\},
$$
so $q_K$ denotes any representative of the class on the right, whenever this class is finite, and $q_K=\infty$ otherwise.
We then have the following result, cf.~\cite{ALN}*{Section~2}.

\begin{proposition}\label{prop:muN}
For any scale $N$ satisfying~\eqref{eq:admis} on a countable right LCM semigroup~$S$ and any $\beta\ge0$, a probability measure $\mu$ on $\widehat{E(S)}$ satisfying~\eqref{eq:scaling} is unique if it exists, and we have
\begin{equation}\label{eq:muN}
\mu(Z_{s,F})=N(s)^{-\beta}+\sum_{\emptyset\neq K\subset [F]} (-1)^{\vert K\vert} N(q_K)^{-\beta}
\end{equation}
for all $s \in S$ and $F\ssubset sS$, where $[F]$ denotes the image of $F$ in $S/{\sim_N}$ and we use the convention $N(\infty)^{-\beta}=0$.

Furthermore, such a measure exists (or, equivalently, a $\sigma^N$-KMS$_\beta$-state on~$C^*(S)$ exists) if and only if the expression on the right in~\eqref{eq:muN} is nonnegative for $s=e$ and all $[F]\ssubset \{[t]\in S/{\sim_N}: [e]<[t]\}$.
\end{proposition}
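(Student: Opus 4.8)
The plan is to determine $\mu$ completely on the clopen cylinder sets $Z_{s,F}$, which form a basis of the topology of the compact, Hausdorff, totally disconnected space $\widehat{E(S)}$. Since a regular Borel probability measure on such a space is determined by its values on clopen sets, and every clopen set is a finite union of basic cylinders, it suffices to pin down $\mu(Z_{s,F})$; this gives both the formula and uniqueness at once. The starting point is the observation already used in Lemma~\ref{lem:neccessary}: taking $A=Z_e=\widehat{E(S)}$ in~\eqref{eq:scaling} and using $\lambda_s\cdot Z_e=Z_s$ gives $\mu(Z_s)=N(s)^{-\beta}$.

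For the formula I would write $Z_{s,F}=Z_s\setminus\bigcup_{t\in F}Z_t$ (with $F\ssubset sS$, so each $Z_t\subset Z_s$) and apply inclusion--exclusion. By Lemma~\ref{lem:lattice-set} we have $\bigcap_{t\in G}Z_t=Z_{q_G}$, where $q_G$ represents $\bigcap_{t\in G}tS$ (the intersection being empty exactly when the classes have no common upper bound), so each term equals $N(q_G)^{-\beta}$ with the convention $N(\infty)^{-\beta}=0$. Two facts then let me collapse the sum over $G\subseteq F$ into the stated sum over $K\subseteq[F]$: first, $N$ is constant on $\sim_N$-classes and $[q_G]=\bigvee_{t\in G}[t]$ by Lemma~\ref{lem:quasi-lattice}, so $N(q_G)^{-\beta}$ depends only on the image $\{[t]:t\in G\}\subseteq[F]$; second, for fixed $\emptyset\neq K\subseteq[F]$ the sum of $(-1)^{|G|}$ over all $G$ mapping onto $K$ factors over the fibers as $\prod_{[u]\in K}\big((1-1)^{m_{[u]}}-1\big)=(-1)^{|K|}$, where $m_{[u]}\geq1$ is the fiber size over $[u]$. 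This yields~\eqref{eq:muN}, and uniqueness follows from the reduction in the first paragraph.

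For existence, necessity is immediate: if $\mu$ exists then~\eqref{eq:muN} holds and $\mu(Z_{e,F})\geq0$, which is exactly the stated inequality. For sufficiency I would first reduce to $s=e$: writing $t=s(s^{-1}t)$ for $t\in F$ gives $Z_{s,F}=\lambda_s\cdot Z_{e,s^{-1}F}$, so~\eqref{eq:scaling} forces $\mu(Z_{s,F})=N(s)^{-\beta}\mu(Z_{e,s^{-1}F})$, and nonnegativity for all $(s,F)$ is equivalent to nonnegativity for $s=e$. Among the sets $Z_{e,F}$, those with $F\cap\ker N\neq\emptyset$, i.e.\ $[e]\in[F]$, are automatically harmless: since $[e]$ is the bottom element of $S/{\sim_N}$, splitting $K\subseteq[F]$ according to whether it contains $[e]$ and using $[e]\vee[u]=[u]$ collapses the right-hand side of~\eqref{eq:muN} to $0$. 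The remaining sets $Z_{e,F}$, with all classes $>[e]$, are governed precisely by the hypothesis.

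It then remains to upgrade the numbers~\eqref{eq:muN} to an honest measure. I would check that the cylinder sets form a semiring (closed under intersection via $Z_s\cap Z_t=Z_r$, with differences expressible as finite disjoint unions of cylinders) and that~\eqref{eq:muN} defines a nonnegative, finitely additive content on it; a regular Borel probability measure then exists by Carathéodory, countable additivity on the generated algebra being automatic from compactness of $\widehat{E(S)}$ (every clopen partition is finite). Finally~\eqref{eq:scaling} holds because both sides are Borel measures in $A$ agreeing on the generating $\pi$-system of cylinders, using $\mu(Z_{s,F})=N(s)^{-\beta}\mu(Z_{e,s^{-1}F})$. I expect the main obstacle to be precisely this last packaging step --- verifying well-definedness and finite additivity of the content defined by~\eqref{eq:muN} on the Boolean algebra of clopen sets --- whereas the nonnegativity reduction, though the conceptual heart, is comparatively clean.
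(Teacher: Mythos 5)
Your derivation of \eqref{eq:muN} is correct and takes a genuinely different route from the paper's. You obtain the formula by direct inclusion--exclusion over subsets $G\subseteq F$, using $\bigcap_{t\in G}Z_t=Z_{q_G}$, $[q_G]=\bigvee_{t\in G}[t]$, and the fiber-collapsing identity $\sum_{\emptyset\neq H\subseteq X}(-1)^{|H|}=-1$ to pass from subsets of $F$ to subsets of $[F]$. The paper instead builds, for each finite intersection-closed family $\{s_1S,\dots,s_nS\}$, the finite-dimensional algebra $A=\operatorname{span}\{\un_{Z_{s_i}}\}$, a unital $*$-homomorphism $\pi\colon A\to B\subset\ell^\infty(S/{\sim_N})$ with $\pi(\un_{Z_{s_i}})=\un_{I_{[s_i]}}$, and the functional $\varphi(\un_{I_{[s]}})=N(s)^{-\beta}$; your fiber-collapsing computation is exactly what the homomorphism property of $\pi$ encodes, so both derivations are sound and yield the same identity. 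Your uniqueness argument and your observation that $[e]\in[F]$ forces the right-hand side of \eqref{eq:muN} to vanish (pair each $K$ with $K\cup\{[e]\}$) are also correct and align with what the paper needs.

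Two issues in the existence direction, one minor and one substantive. First, your reduction to $s=e$ ``via \eqref{eq:scaling}'' is circular as phrased: no measure exists yet at that point. What you actually need is the purely combinatorial identity that the right-hand side of \eqref{eq:muN} for $(s,F)$ equals $N(s)^{-\beta}$ times that for $(e,s^{-1}F)$; this follows from left cancellation, which gives $saS\cap sbS=s(aS\cap bS)$, hence $s[a]\vee s[b]=s([a]\vee[b])$ and $N(q_{sK})^{-\beta}=N(s)^{-\beta}N(q_K)^{-\beta}$ --- the same one-line reduction the paper makes from ``$N$ is a homomorphism.'' Second, and more seriously, the step you yourself flag as the main obstacle --- well-definedness and finite additivity of the content defined by \eqref{eq:muN} on the Boolean algebra of clopen sets --- is left unverified, and verifying it set-by-set on the semiring of cylinders is genuinely awkward. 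This is precisely the point where the paper's functional-analytic device earns its keep: rather than a set function, one defines a linear functional on the span of the indicators $\un_{Z_s}$ (well-defined because these functions, for $s$ ranging over representatives of $S/S^*$, are linearly independent); additivity is then automatic from linearity, positivity need only be checked on the atoms $Z_{s,F_s}$ of each finite subalgebra --- which is exactly your nonnegativity hypothesis after the two reductions above --- and the Riesz representation theorem on the compact space $\widehat{E(S)}$ replaces the Carath\'eodory extension entirely. Your plan is completable, but as written it has a hole at this step; I would replace the content/Carath\'eodory packaging by the linear-functional argument.
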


Whenever such a measure $\mu$ exists, we denote it by $\mu_{N,\beta}$. For $\beta=1$ we will write $\mu_N$ instead of~$\mu_{N,1}$.

\bp
Using Lemma~\ref{lem:lattice-set} it is easy to see that the functions $\un_{Z_s}$, $s\in S$, span the algebra of locally constant functions on $\widehat{E(S)}$. It follows that any probability measure on $\widehat{E(S)}$ is completely determined by its values on the sets $Z_s$. If such a measure $\mu$ satisfies~\eqref{eq:scaling}, then, as we already used in the proof of Lemma~\ref{lem:neccessary}, we have $\mu(Z_s)=N(s)^{-\beta}$. This proves the uniqueness of $\mu$.

\smallskip

In order to prove \eqref{eq:muN}, let us look at the algebra of locally constant functions on $\widehat{E(S)}$ in more detail. Every finite dimensional subspace of this algebra lies in the span of the characteristic functions of finitely many sets $Z_{s_1},\dots, Z_{s_n}$. We may assume that $e\in\{s_1,\dots,s_n\}$.  By taking intersections of the sets $s_iS$, which are either empty or have the form $sS$, and adding such elements~$s$ to the set $\{s_1,\dots,s_n\}$, we may also assume that the collection $\{s_1S,\dots,s_nS\}\cup\{\emptyset\}$ is closed under intersections. By Lemma~\ref{lem:lattice-set}, this implies that the collection $\{Z_{s_1},\dots,Z_{s_n}\}\cup\{\emptyset\}$ is also closed under intersections. Hence $A:=\operatorname{span}\{\un_{Z_{s_i}}:1\le i\le n\}$ is a finite dimensional unital C$^*$-subalgebra of $C(\widehat{E(S)})$.

On the other hand, consider the space $B:=\operatorname{span}\{\un_{I_{[s_i]}}:1\le i\le n\}$ for the sets $I_{[s]}:=\{[t]\in S/{\sim_N}: [s]\le[t]\}$.
As the functions $\un_{Z_s}$, with $s$ running over a set of representatives of $S/S^*$, are linearly independent, we have a well-defined linear map $\pi\colon A\to B$ such that $\pi(\un_{Z_{s_i}})=\un_{I_{[s_i]}}$.
By Lemma~\ref{lem:quasi-lattice}, the space $B$ is a unital C$^*$-subalgebra of $\ell^\infty(S/{\sim_N})$ and $\pi$ is a unital $*$-homomorphism. Since the functions $\un_{I_{[s]}}$, $[s]\in S/{\sim_N}$, are linearly independent, we have a well-defined linear functional $\varphi$ on $\operatorname{span}\{\un_{I_{[s]}}: [s]\in S/{\sim_N}\}$ such that $\varphi(\un_{I_{[s]}})=N(s)^{-\beta}$. Then, for any $f\in A$, we have
$$
\int f\,d\mu=\varphi(\pi(f)).
$$

In particular, if we take $s=s_i$ and $F\subset\{s_j: s_j\in s_iS\}$, then
$$
\un_{Z_{s,F}}=\prod_{r\in F}(\un_{Z_s}-\un_{Z_r}),
$$
where the product is interpreted as $\un_{Z_s}$ if $F=\emptyset$. Therefore
$$
\pi(\un_{Z_{s,F}})=\prod_{r\in [F]}(\un_{I_{[s]}}-\un_{I_{[r]}})=\un_{I_{[s]}}+\sum_{\emptyset\neq K\subset [F]} (-1)^{\vert K\vert} \un_{I_{[q_K]}},
$$
with the convention $I_\infty=\emptyset$. Applying $\varphi$ we get the required formula for $\mu(Z_{s,F})$.

\smallskip

Finally, assume that the right hand side in \eqref{eq:muN} is nonnegative for $s=e$ and all $[F]\ssubset \{[t]\in S/{\sim_N}: [e]<[t]\}$.
Since $N$ is a homomorphism, we can then also conclude that it is nonnegative for all $[s]\in S/{\sim_N}$ and $[F]\ssubset \{[t]\in S/{\sim_N}: [s]<[t]\}$.

Let $A$ and $B$ be the algebras introduced above for a given choice of $\{s_1,\ldots,s_n\}$. The atoms of~$B$ have the form $\prod_{r\in [F]}(\un_{I_{[s]}}-\un_{I_{[r]}})$, where $s=s_i$ for some~$i$ and $F=\{s_j: [s_i]<[s_j]\}$. Hence $\varphi|_B\ge0$, and therefore $\varphi\circ\pi$ is a state on~$A$. Since this is true for all $A$ as above, we conclude that we have a positive linear functional on the algebra of locally constant functions on $\widehat{E(S)}$ such that its value at $\un_{Z_s}$ is $N(s)^{-\beta}$ for all~$s$. By continuity this functional extends to a state on $C(\widehat{E(S)})$ which then defines a probability measure~$\mu$ such that $\mu(Z_s)=N(s)^{-\beta}$ for all $s$.

It remains to show that $\mu$ satisfies the scaling condition~\eqref{eq:scaling}. Thus, we have to show that, for every $s\in S$, the Borel measures $\mu(\lambda_s\cdot)$ and $N(s)^{-\beta}\mu$ are equal. Since the functions $\un_{Z_t}$, $t\in S$, span a dense subspace of $C(\widehat{E(S)})$, it suffices to check that $\mu(\lambda_s\cdot Z_t)=N(s)^{-\beta}\mu(Z_t)$ for all $t$. But this is obviously true, as $\lambda_s\cdot Z_t=Z_{st}$.
\ep

\begin{remark}
A similar characterization of $\mu_{N,\beta}$ can also be given in terms of the quasi-lattice~$S/S^*$, which does not require any assumptions on $N$ as this quasi-lattice is always well-defined. The advantage of the quasi-lattice $S/{\sim_N}$ is that its order structure is usually simpler than that of~$S/S^*$. The set $S/{\sim_N}$ is still infinite (once $N$ is nontrivial) and therefore we still need to check positivity of infinitely many expressions~\eqref{eq:muN}. It is an interesting problem to find situations when it suffices to check only finitely many inequalities. For example, it is shown in~\cite{ALN} that the finitely generated right-angled Artin monoids have this property.
\end{remark}

\begin{remark}\label{rem:beta1enough}
We have $\mu_{N,\beta}=\mu_{N^\beta}$, so in developing a general theory for a fixed $\beta$ we can always assume $\beta=1$.
\end{remark}

\section{KMS-states of finite type}\label{sec:KMS}

Let $S$ be a right LCM semigroup and $N\colon S\to[1,+\infty)$ be a nontrivial scale. Assuming that the partially ordered set $(\ker N)/(\ker N)^*$ is directed, so that the equivalence relation~\eqref{eq:equiv} on $S$ for $T=\ker N$ is well-defined, the corresponding \emph{partition function}, or \emph{$\zeta$-function}, is defined by
$$
\zeta_N(\beta) := \sum_{[s] \in S/{\sim_N}} N(s)^{-\beta}.
$$
A KMS$_\beta$-state will be called of \emph{finite type} if $\zeta_N(\beta)<+\infty$. Our goal is to obtain a complete description of such KMS-states.

\smallskip

A necessary condition for the existence of KMS$_\beta$-states for $\beta\ne0$ is that $N$ satisfies condition~\eqref{eq:admis}, see Lemma~\ref{lem:neccessary}. In the present case this condition is automatically satisfied.

\begin{lemma}\label{lem:automatic_admis}
If $\zeta_N(\beta)<+\infty$ for some $\beta$, then $N$ satisfies condition~\eqref{eq:admis}.
\end{lemma}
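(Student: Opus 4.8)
The plan is to reduce the statement to Lemma~\ref{lem:finite-level}, which already yields condition~\eqref{eq:admis} under the two hypotheses that $(\ker N)/(\ker N)^*$ is directed and that the level sets $N^{-1}(\lambda)/{\sim_N}$ are finite for all $\lambda\in N(S)$. The first of these is built into the very definition of $\zeta_N$: the equivalence relation $\sim_N$, and hence the summation index $S/{\sim_N}$, only makes sense once $(\ker N)/(\ker N)^*$ is directed, so writing $\zeta_N(\beta)<+\infty$ already presupposes directedness. Thus the only thing I need to extract from the finiteness of $\zeta_N(\beta)$ is the finiteness of each level set.

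The key step is to reorganize the sum defining $\zeta_N(\beta)$ according to the value of $N$. Since $a,b\in\ker N$ forces $N(sa)=N(s)=N(tb)$, the scale $N$ is constant on each $\sim_N$-class, so $S/{\sim_N}$ decomposes as the disjoint union $\bigsqcup_{\lambda\in N(S)} N^{-1}(\lambda)/{\sim_N}$ and
$$
\zeta_N(\beta)=\sum_{\lambda\in N(S)}\bigl|N^{-1}(\lambda)/{\sim_N}\bigr|\,\lambda^{-\beta}.
$$
Because each $\lambda\in N(S)\subset[1,+\infty)$ contributes a strictly positive coefficient $\lambda^{-\beta}>0$, the finiteness of the whole series of nonnegative terms forces each summand to be finite; since $\lambda^{-\beta}>0$, this means $\bigl|N^{-1}(\lambda)/{\sim_N}\bigr|<\infty$ for every $\lambda\in N(S)$.

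Having established that every level set $N^{-1}(\lambda)/{\sim_N}$ is finite, I would invoke Lemma~\ref{lem:finite-level} to conclude that $N$ satisfies~\eqref{eq:admis}. I do not expect a serious obstacle here: the content is entirely the bookkeeping observation that convergence of a series of nonnegative terms, carrying a common strictly positive factor $\lambda^{-\beta}$ on each fiber, forces the fibers to be finite, after which the earlier lemma does the real work. The only points to check carefully are that $N$ is indeed constant on $\sim_N$-classes (so that the regrouping is legitimate) and that no cancellation can hide an infinite fiber, both of which are immediate since all terms are nonnegative and each $\lambda\geq1$.
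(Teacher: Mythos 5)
Your proposal is correct and follows essentially the same route as the paper, whose proof is a two-line observation that finiteness of $\zeta_N(\beta)$ forces the level sets $N^{-1}(\lambda)/{\sim_N}$ to be finite (since each term $\lambda^{-\beta}$ is strictly positive), after which Lemma~\ref{lem:finite-level} gives condition~\eqref{eq:admis}. Your extra verifications --- that directedness of $(\ker N)/(\ker N)^*$ is presupposed by the definition of $\zeta_N$, and that $N$ is constant on $\sim_N$-classes so the regrouping is legitimate --- are correct and simply make explicit what the paper leaves implicit.
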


\bp
For $\zeta_N(\beta)$ to be finite, at the very least we need the sets $N^{-1}(\lambda)/{\sim_N}$, $\lambda\in N(S)$, to be finite. But this implies condition~\eqref{eq:admis} by Lemma~\ref{lem:finite-level}.
\ep

Before we move to the KMS-states, let us prove a couple of general results on hereditary LCM submonoids.

Let us introduce the following notation. Assume $T$ is a hereditary right LCM submonoid of $S$. Then $E(T)$ is a subsemigroup of $E(S)$. We can extend every character $\chi$ of $E(T)$ to $E(S)$ by letting
$$
\chi'(e_s):=0\ \ \text{for}\ \ s\in S\setminus T.
$$
It is easy to see that $\chi'$ is a character of $E(S)$.

Next, recall that for any inverse semigroup $I$ without $0$ the relation
$$
x\gamma y\ \ \text{iff}\ \ xp=yp\  \ \text{for some idempotent}\ \ p \in I
$$
is the congruence that defines the \emph{maximal group homomorphic image} $I/\gamma$ of $I$.

\begin{thm}\label{thm:bisimple inverse submonoids}
Let $T$ be a hereditary right LCM submonoid of a right LCM semigroup $S$. Then the map $[\lambda_s^{\phantom{1}}\lambda_t^{-1},\chi]\mapsto [\lambda_s^{\phantom{1}}\lambda_t^{-1}, \chi']$ gives an embedding $\iota$ of $\CG(T)$ into $\CG(S)$ whose image coincides with the reduction of $\CG(S)$ by $\iota(\CG(T)^{(0)})$.

Suppose in addition that the partially ordered set $T/T^*$ is directed. Then the following statements hold:
\begin{enumerate}
\item[(i)] for the maximal character $\un\in \widehat{E(T)}$ (given by $\un(e_t) = 1$ for all $t$) and $\chi_T:=\un'\in\widehat{E(S)}$, we have $\CG(S)^{\chi_T}_{\chi_T} = \{[\lambda_s^{\phantom{*}}\lambda_t^{-1},\chi_T] \mid s,t \in T\}$, 
and this isotropy group is canonically isomorphic to $I_\ell(T)/\gamma$, where $I_\ell(T)$ now denotes the left inverse hull of $T$ without $0$;
\item[(ii)] the $\CG(S)$-orbit of $\chi_T$ is  given by $\CO(\chi_T) = \{ \chi_{T,s} \mid s \in S\}$, where $\chi_{T,s}(e_r) = 1$ if and only if $sr' \in rS$ for some $r' \in T$;
\item[(iii)] the map $S/{\sim_T} \to \CO(\chi_T), [s] \mapsto \chi_{T,s}$, where $\sim_T$ is the equivalence relation on $S$ defined by~\eqref{eq:equiv}, is a bijection.
\end{enumerate}
\end{thm}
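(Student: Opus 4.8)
The plan is to establish the embedding $\iota$ first and to read (i)--(iii) off its properties. Throughout I will use the two consequences of hereditariness from Remark~\ref{rem:hereditary}, namely $S^*\subset T$ and the implication $st\in T\Rightarrow s,t\in T$, together with the observation that for $s,t\in T$ with $sS\cap tS=rS$ one automatically has $r\in T$ (indeed $sT\cap tT=r_0T$ satisfies $r_0S=rS$, so $r\in r_0S^*\subset T$). The computational core of the whole proof is the single identity
\[
\lambda_s\lambda_t^{-1}\cdot\chi'=\bigl(\lambda_s\lambda_t^{-1}\cdot\chi\bigr)'\quad\text{for } \lambda_s\lambda_t^{-1}\in I_\ell(T),\ \chi\in\widehat{E(T)},\ \chi(e_t)=1.
\]
I would prove it using $\lambda_a^{-1}e_r\lambda_a=e_{a^{-1}p}$ (when $aS\cap rS=pS$, and $0$ otherwise) and $\lambda_ae_r\lambda_a^{-1}=e_{ar}$ to reduce the left-hand side at $e_r$ to $\chi'(e_{t\,s^{-1}p})$, then splitting into the cases $r\in T$ and $r\notin T$ via hereditariness: in the first case $p,s^{-1}p,ts^{-1}p\in T$ and the value matches the analogous computation inside $T$, while in the second case $ts^{-1}p\in T$ would force $r\in T$, so both sides vanish.

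Granting the identity, well-definedness of $\iota$ follows from Lemma~\ref{lem:equivalence relation explained}: an equivalence in $\CG(T)$ is witnessed by $r_1,r_2\in T$ with $\chi(e_{t_1r_1})=1$, and the same witnesses serve in $\CG(S)$ because $t_1r_1\in T$ yields $\chi'(e_{t_1r_1})=1$. Injectivity is the converse, reading the $\CG(S)$-witnesses $r_1,r_2$ back into $T$: from $\chi_1'(e_{t_1r_1})=1$ we get $t_1r_1\in T$, hence $r_1,r_2\in T$ by hereditariness, so the equivalence already holds in $\CG(T)$. The identity itself says that $\iota$ intertwines the range maps, which (with the trivial intertwining of sources) gives that $\iota$ preserves composability, and products are preserved by definition, so $\iota$ is a groupoid homomorphism. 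For the image being the reduction by $\iota(\CG(T)^{(0)})$, the inclusion ``$\subseteq$'' is immediate from the identity; for ``$\supseteq$'', take $g=[\lambda_s\lambda_t^{-1},\psi]$ whose source $\psi=\chi'$ and range both lie in $\iota(\CG(T)^{(0)})$. Then $\chi'(e_t)=1$ gives $t\in T$, while the one-line computation $(\lambda_s\lambda_t^{-1})^{-1}e_s(\lambda_s\lambda_t^{-1})=e_t$ shows that the range of $g$ takes the value $\psi(e_t)=1$ at $e_s$, forcing $s\in T$; hence $g$ is in the image.

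For the numbered statements, assume $T/T^*$ directed. For~(i), directedness makes $\un$ the largest character and gives $x\cdot\un=\un$ for all $x\in I_\ell(T)$ (as $sT\cap rT\neq\emptyset$ keeps $x^{-1}e_rx\neq0$); thus $\CG(T)^{\un}_{\un}=\{[x,\un]\mid x\in I_\ell(T)\}$, and transporting through $\iota$ gives the stated description of $\CG(S)^{\chi_T}_{\chi_T}$. The identification with $I_\ell(T)/\gamma$ comes from the surjection $x\mapsto[x,\un]$, whose fibre relation $[x,\un]=[y,\un]\Leftrightarrow xe_r=ye_r$ for some $r\in T$ is precisely $\gamma$. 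For~(ii), a direct computation gives $\lambda_s\cdot\chi_T=\chi_{T,s}$ (matching $\chi_T(e_{s^{-1}p})=1$ with the condition $sr'\in rS$, $r'\in T$, via hereditariness) and $\lambda_t^{-1}\cdot\chi_T=\chi_T$ for $t\in T$, so the orbit equals $\{\lambda_s\cdot\chi_T\mid s\in S\}=\{\chi_{T,s}\mid s\in S\}$. For~(iii), surjectivity is~(ii) and well-definedness uses directedness of $T$; the crux is injectivity: from $\chi_{T,s}=\chi_{T,s'}$, evaluating at $e_s$ and $e_{s'}$ produces $s'r'\in sS$ and $sr''\in s'S$ with $r',r''\in T$, and writing $sS\cap s'S=pS$ and left-cancelling yields $p=s\bar p=s'\tilde p$ with $\bar p,\tilde p\in T$, i.e.\ $s\sim_T s'$.

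I expect the main obstacle to be the bookkeeping around the action identity of the first paragraph and its repeated use: every step that moves an element between $T$ and $S$ (the values of $\chi'$, and the elements $s^{-1}p$, $\bar p$, $\tilde p$) must be justified through hereditariness and the right LCM submonoid property, and the non-Hausdorff groupoid forces one to argue entirely on the level of characters and representatives rather than through a conditional expectation. The two genuinely non-routine points are the identity $(\lambda_s\lambda_t^{-1})^{-1}e_s(\lambda_s\lambda_t^{-1})=e_t$ that upgrades ``range in $\iota(\CG(T)^{(0)})$'' to $s\in T$, and the extraction of $s\sim_T s'$ from the two one-sided membership conditions via the least common right multiple $pS=sS\cap s'S$.
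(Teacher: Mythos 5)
Your proposal is correct and follows essentially the same route as the paper: the same key identity $\lambda_s^{\phantom{1}}\lambda_t^{-1}\cdot\chi'=(\lambda_s^{\phantom{1}}\lambda_t^{-1}\cdot\chi)'$ established via the conjugation computation and hereditariness, the same use of Lemma~\ref{lem:equivalence relation explained} for well-definedness and injectivity of $\iota$, the same characterization of the reduction (the paper gets $s\in T$ by noting the inverse also lies in the reduction, which is your $(\lambda_s^{\phantom{1}}\lambda_t^{-1})^{-1}e_s(\lambda_s^{\phantom{1}}\lambda_t^{-1})=e_t$ computation made explicit), and identical treatments of (i)--(iii), including the $\gamma$-congruence identification and the least-common-multiple argument for injectivity in (iii). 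The only point the paper includes that you omit is the one-sentence remark that the topology on $\iota(\CG(T))$ coincides with the relative topology from $\CG(S)$, which is immediate from the definition of the basic open sets and is needed for $\iota$ to be an embedding of topological groupoids.
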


\begin{proof}
It is easy to see that the map $\iota\colon\CG(T)\to\CG(S)$ is well-defined. In order to see that it is a homomorphism of groupoids it suffices to check that
$$
\lambda_s^{\phantom{1}}\lambda_t^{-1}\cdot\chi'=(\lambda_s^{\phantom{1}}\lambda_t^{-1}\cdot\chi)'
$$
for all $s,t\in T$ and $\chi\in\widehat{E(T)}$ with $\chi(e_t)=1$, where on the left hand side we of course consider $\lambda_s^{\phantom{1}}\lambda_t^{-1}$ as an element of~$I_\ell(S)$, while on the right hand side as an element of $I_\ell(T)$. For this it suffices to check that $\lambda_s^{\phantom{1}}\lambda_t^{-1}\cdot\chi'$ vanishes on $e_r$ for $r\in S\setminus T$. For this, in turn, it is enough to show that if $\lambda_t^{\phantom{1}}\lambda_s^{-1}e_r\lambda_s^{\phantom{1}}\lambda_t^{-1}=e_a$ for some $s,t,a\in T$, then $r\in T$. The equality $\lambda_t^{\phantom{1}}\lambda_s^{-1}e_r\lambda_s^{\phantom{1}}\lambda_t^{-1}=e_a$ means that $a=tb$ for some $b\in S$ such that $rS\cap sS=sbS$. Using that $T$ is a hereditary right LCM submonoid we first conclude that $b\in T$ (see Remark~\ref{rem:hereditary}) and then that $r\in T$.

Next, let us check that $\iota$ is injective. Assume $[\lambda_{s_1}^{\phantom{1}}\lambda_{t_1}^{-1},\chi']=[\lambda_{s_2}^{\phantom{1}}\lambda_{t_2}^{-1},\chi']$ in $\CG(S)$ for some $s_i,t_i\in T$ and $\chi\in\widehat{E(T)}$. By Lemma~\ref{lem:equivalence relation explained} this means that there exist $r_1,r_2\in S$ such that $s_1r_1=s_2r_2$, $t_1r_1=t_2r_2$ and $\chi'(e_{t_1r_1})=1$. The last equality means that $t_1r_1\in T$. Hence $r_1\in T$ and, similarly, $r_2\in T$. Therefore we get $[\lambda_{s_1}^{\phantom{1}}\lambda_{t_1}^{-1},\chi]=[\lambda_{s_2}^{\phantom{1}}\lambda_{t_2}^{-1},\chi]$ in $\CG(T)$.

Thus, we can consider $\CG(T)$ as a subgroupoid of $\CG(S)$, at least if we ignore the topologies on both groupoids. It is clear that $\CG(T)$ is contained in the reduction of $\CG(S)$ by $\iota(\CG(T)^{(0)})$. Assume an element $[\lambda_s^{\phantom{1}}\lambda_t^{-1},\chi']$ lies in this reduction. Then $\chi'(e_t)=1$, hence $t\in T$. Since the inverse of $[\lambda_s^{\phantom{1}}\lambda_t^{-1},\chi']$ lies in the reduction as well, we similarly get $s\in T$. Thus, $\CG(T)$ is indeed the reduction of $\CG(S)$ by $\iota(\CG(T)^{(0)})$. By the definition of the topology on $\CG(S)$ it then also becomes clear that the topology on $\CG(T)$ coincides with the relative topology.

\smallskip

Assume now that $T/T^*$ is directed. To prove (i), we may assume that $T=S$ because the isotropy group of $\chi_T$ is contained in the reduction of $\CG(S)$ by $\iota(\CG(T)^{(0)})$. But then the first part of~(i) becomes obvious, since $T/T^*$ is directed and as a result $x\cdot\un=\un$ for all $x\in I_\ell(T)$. It remains to check that $I_\ell(T)/\gamma=\CG(T)^\un_\un$.

First of all, as $\gamma$ is by construction the congruence that is satisfied by every homomorphism from an inverse semigroup to a group, we have a homomorphism $I_\ell(T)/\gamma\to\CG(T)^\un_\un$. This map is clearly surjective. It is also injective, because the condition $[x,\un]=[y,\un]$ in $\CG(T)$ for $x,y \in I_\ell(T)$ is equivalent to the existence of $t \in T$ with  $\un(e_t)=1$ and $xe_t=ye_t$, which is exactly the condition~$x\gamma y$.

\smallskip

For (ii),  the $\CG(S)$-orbit of $\chi_T$ consists of the points $\lambda_s^{\phantom{1}}\lambda_t^{-1}\cdot\chi_T$ for all $s,t\in S$ such that the element $[\lambda_s^{\phantom{1}}\lambda_t^{-1},\chi_T]$ of $\CG(S)$ is well-defined. This means that $\chi_T(e_t)=1$, that is, $t\in T$. Since the elements $\lambda_t$, $t\in T$, stabilize $\chi_T$, the orbit therefore consists of the elements $\lambda_s\cdot\chi_T$, $s\in S$.

Take $r\in S$, then the element $\lambda_s^{-1}e_r\lambda_s$ equals $e_{r'}$ if $rS\cap sS=sr'S$, and it equals $0$ if $rS\cap sS=\emptyset$. Hence $(\lambda_s\cdot\chi_T)(e_r)=1$ if and only if $rS\cap sS=sr'S$ for some $r'\in T$. The last condition is equivalent to the existence of $r'\in T$ such that $sr'\in rS$. Thus, $\lambda_s\cdot\chi_T=\chi_{T,s}$.

\smallskip

Turning to (iii), recall again that the elements $\lambda_t$, $t\in T$, stabilize $\chi_T$. It follows that the character $\chi_{T,s}=\lambda_s\cdot\chi_T$ depends only on the equivalence class of $s\in S$. Therefore the map $S/{\sim_T}\to\CO(\chi_T)$, $[s]\mapsto\chi_{T,s}$, is well-defined and surjective.

Assume now that $\chi_{T,s}=\chi_{T,t}$ for some $s,t \in S$. As $\chi_{T,s}(s)=\chi_{T,t}(t)=1$, there exist $c,d \in T$ with $sc \in tS$ and $td \in sS$. It follows that $sS\cap tS = saS$ and $sa=tb$ for some $a,b \in S$ with $c \in aS$ and $d\in bS$. As $T$ is hereditary, this forces $a,b \in T$. Hence $s\sim_T t$.
\end{proof}

\begin{remark}\label{rem:rLCM 0-bisimple}
In view of the categorical equivalences between bisimple inverse monoids and directed right LCM semigroups, see \cite{Cli}, and $0$-bisimple inverse monoids and right LCM semigroups, see \cite{Law0}, Theorem~\ref{thm:bisimple inverse submonoids} can be understood as a result on hereditary bisimple inverse submonoids of $0$-bisimple inverse monoids. Here $J\subset I$ is hereditary if, for every $x \in I$, the existence of an idempotent $p \in J$ with $xx^{-1}p= p$ forces $x \in J$.
\end{remark}

\begin{corollary}\label{cor:honest subalgebra}
Let $T$ be a hereditary right LCM submonoid of a right LCM semigroup $S$. Then the natural $*$-homomorphism $C^*(T) \to C^*(S)$ arising from the inclusion $T\subset S$ is faithful, so we can consider $C^*(T)$ as a C$^*$-subalgebra of $C^*(S)$. Furthermore, there is a conditional expectation $E\colon C^*(S)\to C^*(T)$ such that
$$
E(v_s^{\phantom{*}}v_t^*)=\begin{cases}v_s^{\phantom{*}}v_t^*,&\text{if}\ \ s,t\in T,\\ 0,&\text{otherwise}.\end{cases}
$$
\end{corollary}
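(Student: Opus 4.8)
The plan is to pass to the groupoid picture and realize the expectation as a compression by the projection corresponding to the closed set $Y:=\iota(\CG(T)^{(0)})$ produced by Theorem~\ref{thm:bisimple inverse submonoids}. First I would describe $Y$ combinatorially: since $T$ is hereditary, a character $\chi\in\widehat{E(S)}$ lies in $Y$ exactly when $\chi(e_r)=0$ for all $r\in S\setminus T$, so $Y=\bigcap_{r\in S\setminus T}\{\chi:\chi(e_r)=0\}$ is closed and its indicator is the decreasing limit $\un_Y=\inf_F p_F$ of the projections $p_F:=\prod_{r\in F}(1-v_rv_r^*)=\un_{Z_{e,F}}$, over finite $F\subset S\setminus T$. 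Working in $C^*(S)^{**}$, I set $p:=\un_Y=\lim_F p_F$ (ultraweak limit).

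Next I would build the map by compressing with $p$. Each $x\mapsto p_Fxp_F$ is completely positive and contractive, and as $p_F\to p$ strongly$^*$ these converge point-ultraweakly to the c.c.p.\ map $E_0(x)=pxp$. On the spanning elements $E_0(v_sv_t^*)=0$ whenever $s\notin T$ or $t\notin T$: indeed, using $\un_{Z_s}v_s=v_s$ and $\un_{Z_{e,F}}\un_{Z_s}=0$ once $s\in F$ we get $p_Fv_s=0$, and dually $v_t^*p_F=0$ once $t\in F$, so $p_Fv_sv_t^*p_F$ is eventually $0$. The structural point, which I would prove from hereditarity (Remark~\ref{rem:hereditary}), is that $\lambda_t\cdot Y=Y\cap Z_t$ for every $t\in T$, equivalently $v_t\un_Y=\un_Yv_t$. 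Hence $p$ commutes with $A:=\phi(C^*(T))=\overline{\operatorname{span}}\{v_sv_t^*:s,t\in T\}$, so $A$ lies in the multiplicative domain of $E_0$ and $\Theta:=E_0|_A$, $\Theta(a)=ap$, is a $*$-homomorphism; for $s,t\in T$ it sends $v_sv_t^*$ to $\un_{D(\lambda_s\lambda_t^{-1},Z_t\cap Y)}$, which by Theorem~\ref{thm:bisimple inverse submonoids} is precisely the image of the generator $v_s^T(v_t^T)^*$ of $C^*(\CG(T))=C^*(T)$ in the corner $pC^*(S)^{**}p$.

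Then I would assemble the statement. By continuity $E_0$ maps $C^*(S)$ into the C$^*$-algebra $\Theta(A)=Ap$. Granting that $\Theta$ is injective (addressed below), it is a $*$-isomorphism of $A$ onto $Ap$, and $E:=\phi^{-1}\circ\Theta^{-1}\circ E_0$ is a completely positive contraction $C^*(S)\to C^*(T)$ with $E|_A=\mathrm{id}$; concretely $E$ is the unique map with $E(x)p=pxp$. Evaluating on generators yields $E(v_sv_t^*)=v_sv_t^*$ for $s,t\in T$ and $0$ otherwise, and $E\circ\phi=\mathrm{id}_{C^*(T)}$. This identity forces $\phi$ to be isometric, so $C^*(T)$ sits faithfully in $C^*(S)$, and $\phi\circ E$ is the asserted conditional expectation with the stated formula.

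The main obstacle is exactly the injectivity of $\Theta$, that is, that the \emph{full} $C^*(T)=C^*(\CG(T))$ embeds into the corner $pC^*(S)^{**}p$ rather than into a proper quotient; equivalently, that $\phi$ preserves the universal norm. I expect to settle this by an induction/dilation argument using the orbit and isotropy data of Theorem~\ref{thm:bisimple inverse submonoids}(i)--(iii): from an arbitrary representation $\pi$ of $C^*(T)$ I would induce a representation of $C^*(S)$ on $\ell^2(S/{\sim_T})\otimes H_\pi$, via the identification $\CO(\chi_T)\cong S/{\sim_T}$, the action of $S$ on $S/{\sim_T}$ from Lemma~\ref{lem:action}, and the isotropy $I_\ell(T)/\gamma$ at $\chi_T$, and then check that compressing to the base point $[\chi_T]$ recovers $\pi$. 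This gives $\|\phi(a)\|\ge\|\pi(a)\|$ for all $\pi$, hence isometry and the injectivity of $\Theta$. The remaining verifications (complete positivity of the compressions, the eventual vanishing, and the identity $\lambda_t\cdot Y=Y\cap Z_t$) are routine.
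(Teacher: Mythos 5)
Your compression construction is fine as far as it goes: the identification $Y=\iota(\CG(T)^{(0)})=\bigcap_{F\ssubset S\setminus T}Z_{e,F}$, the eventual vanishing $p_Fv_s^{\phantom{*}}=0$ for $s\in F$, and the identity $\lambda_t\cdot Y=Y\cap Z_t$ for $t\in T$ (both inclusions are exactly hereditarity, via Remark~\ref{rem:hereditary}) are all correct, and this parallels the paper's reduction to the closed set $Y$. But the step you yourself flag as the main obstacle is where the proof breaks, and the repair you sketch does not work, for two reasons. First, it is out of scope: the corollary assumes only that $T$ is a hereditary right LCM submonoid, whereas the relation $\sim_T$, the orbit picture $\CO(\chi_T)\cong S/{\sim_T}$, and parts (i)--(iii) of Theorem~\ref{thm:bisimple inverse submonoids} all require $T/T^*$ to be directed; for instance $T=\F_2^+\times\{0\}\subset\F_2^+\times\Z_+$ is a hereditary right LCM submonoid with $T/T^*$ not directed, so there is no $S/{\sim_T}$ to induce over. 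Second, and more fundamentally: inducing from the point $\chi_T$ ``via the isotropy $I_\ell(T)/\gamma$'' can only manufacture representations of $C^*(S)$ whose compression to the base point factors through the group C$^*$-algebra $C^*(I_\ell(T)/\gamma)$, which is in general a proper quotient of $C^*(T)$. Already for $T=\Z_+$ one has $C^*(T)=\mathcal T$ while $I_\ell(T)/\gamma=\Z$, and the Fock representation does not factor through $C(\T)$; in $\widehat{E(T)}$ it lives over the finite characters, not over $\chi_T$. Concretely, for $s\in S$ and a representative $x$ of a class in $S/{\sim_T}$ one only has $sxa=x'b$ with $a,b\in T$, so the natural cocycle operator is $\pi(v_b^{\phantom{*}}v_a^*)$, a partial isometry depending on the choice of $(a,b)$ unless the operators $\pi(v_t)$ are unitaries --- that is, your induction formula is well defined exactly for those representations that factor through the maximal group image, and only for those does compression recover $\pi$. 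Hence the inequality $\|\phi(a)\|\ge\|\pi(a)\|$ is never established for the representations that actually matter, and injectivity of $\Theta$ (equivalently, of $\phi$) remains unproved.

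A correct repair is to induce not from the single point $\chi_T$ but over the whole closed set $Y$, via the $C^*(S)$--$C^*(T)$ correspondence obtained by completing $C_c$ of the arrows of $\CG(S)$ with source in $Y$; this is in essence the content of the result the paper invokes in place of any induction. Namely, by \cite{LLN}*{Remark~1.6} (extended to the non-Hausdorff case), the restriction map $C_c(\CG(S))\to C_c(\CG(S)_Y)$ extends to a completely positive contraction $E\colon C^*(\CG(S))\to C^*(\CG(S)_Y)$ between the \emph{full} C$^*$-algebras, and $C^*(\CG(S)_Y)=C^*(\CG(T))=C^*(T)$ by the first, unconditional part of Theorem~\ref{thm:bisimple inverse submonoids} (which needs no directedness). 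Then $E\circ\pi=\operatorname{id}$ is evident on $C_c(\CG(T))$, injectivity of the canonical map $\pi\colon C^*(T)\to C^*(S)$ follows at once, Tomiyama's theorem upgrades the c.c.p.\ projection $E$ to a conditional expectation, and the formula on generators is immediate since $[\lambda_s^{\phantom{1}}\lambda_t^{-1},\chi']\in\CG(S)_Y$ forces $s,t\in T$. In short, your outline correctly isolates where the work lies, but the isotropy-group induction cannot close the gap: the full-norm domination is precisely what the cited restriction result supplies.
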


\begin{proof}
Denote by $\pi$ the canonical $*$-homomorphism $C^*(T) \to C^*(S)$.

By Theorem~\ref{thm:bisimple inverse submonoids}, $\iota(\CG(T))$ is the reduction of $\CG(S)$ by $\iota(\CG(T)^{(0)})$. In particular, this reduction is an \'etale groupoid.
Hence, by \cite{LLN}*{Remark~1.6}, the restriction map $C_c(\CG(S))\to C_c(\CG(S)_{\iota(\CG(T)^{(0)})})$ extends to a completely positive contraction $E\colon C^*(\CG(S))\to C^*(\CG(S)_{\iota(\CG(T)^{(0)})})$. It should be stressed that this result is formulated in~\cite{LLN} for Hausdorff groupoids, but its proof can be extended to the non-Hausdorff case as well.

We identify $C^*(S)$ with $C^*(\CG(S))$ and $C^*(T)$ with $C^*(\CG(T))$, and therefore consider $E$ as a map $C^*(S)\to C^*(T)$.
By construction it is clear that $E\circ\pi=\operatorname{id}$. Hence $\pi$ is injective, so we can consider~$C^*(T)$ as a subalgebra of $C^*(S)$. But then $E$, being a completely positive contractive projection, becomes a conditional expectation.

Finally, as was shown in the proof of Theorem~\ref{thm:bisimple inverse submonoids}, we have $[\lambda_s^{\phantom{1}}\lambda_t^{-1},\chi'] \in \CG(S)_{\iota(\CG(T)^{(0)})}$ only if $s,t\in T$, hence $E(v_s^{\phantom{*}}v_t^*)\ne0$ only if $s,t\in T$.
\end{proof}

We now apply the above results to $T=\ker N$. To simplify the notation, we will denote the character $\chi_{\ker N}$ on $E(S)$ by $\chi_N$, so that
$$
\chi_N(e_s)=1\ \ \text{if and only if}\ \ s\in\ker N.
$$

\begin{thm}\label{thm:finiteKMS}
Let $S$ be a countable right LCM semigroup and $N$ be a nontrivial scale on $S$ such that $(\ker N)/(\ker N)^*$ is directed. Assume the abscissa of convergence $\beta_c$ of the series $\sum_{[s] \in S/{\sim_N}} N(s)^{-\beta}$ is finite. Then, for every $\beta>\beta_c$, there exists an affine homeomorphism between the $\sigma^N$-KMS$_\beta$-states on~$C^*(S)$ and the tracial states on $C^*(\ker N)$, or equivalently, on the group C$^*$-algebra $C^*(I_\ell(\ker N)/\gamma)$.

Explicitly, the KMS$_\beta$-state $\varphi_{\tau,\beta}$ corresponding to a tracial state $\tau$ on $C^*(\ker N)$ is given by
\begin{equation} \label{eq:KMS3}
\varphi_{\tau,\beta}(v_s^{\phantom{*}}v_t^*)=\frac{N(s)^{-\beta}}{\zeta_N(\beta)}\sum_{[x]\in S/{\sim_N}:[sx]=[tx]}N(x)^{-\beta}\tau(v_{q_x}^{\phantom{*}}v_{p_x}^*),
\end{equation}
where $p_x,q_x\in\ker N$ are such that $sxp_x=txq_x$.
\end{thm}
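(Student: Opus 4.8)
The plan is to combine the groupoid description of KMS$_\beta$-states in Theorem~\ref{thm:KMS} with the structural results of Theorem~\ref{thm:bisimple inverse submonoids} applied to the hereditary right LCM submonoid $T=\ker N$. Throughout, fix $\beta>\beta_c$, so that $\zeta_N(\beta)<+\infty$; by Lemma~\ref{lem:automatic_admis} condition~\eqref{eq:admis} then holds, so $S/{\sim_N}$ is a quasi-lattice and all the preceding machinery applies. The conceptual point to establish is that the unique quasi-invariant measure with Radon--Nikodym cocycle $e^{-\beta c_N}$ is purely atomic, supported on the single $\CG(S)$-orbit $\CO(\chi_N)$, after which Theorem~\ref{thm:KMS} reduces the classification to that of tracial states on the isotropy group $\CG(S)^{\chi_N}_{\chi_N}$.

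First I would pin down the measure. By Theorem~\ref{thm:bisimple inverse submonoids}(ii)--(iii), the orbit $\CO(\chi_N)$ consists exactly of the characters $\chi_{N,s}$, $[s]\in S/{\sim_N}$, with $\lambda_s\cdot\chi_{N,u}=\chi_{N,su}$, and one checks that $\chi_{N,v}\in Z_s$ iff $[s]\le[v]$. Using this I would define the atomic probability measure $\nu:=\zeta_N(\beta)^{-1}\sum_{[s]}N(s)^{-\beta}\delta_{\chi_{N,s}}$ (well defined since $N$ is constant on $\sim_N$-classes) and verify directly the scaling relation~\eqref{eq:scaling}: for Borel $A$, a point $\chi_{N,v}$ lies in $\lambda_s\cdot A$ precisely when $[s]\le[v]$ and $\lambda_s^{-1}\cdot\chi_{N,v}=\chi_{N,u}\in A$ with $[v]=[su]$, and then reindexing by $[u]$, using $N(su)=N(s)N(u)$ and injectivity of $[u]\mapsto[su]$ (Lemma~\ref{lem:action}), gives $\nu(\lambda_s\cdot A)=N(s)^{-\beta}\nu(A)$. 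Since by Proposition~\ref{prop:muN} a probability measure satisfying~\eqref{eq:scaling} is unique when it exists, this forces $\mu_{N,\beta}=\nu$; in particular a KMS$_\beta$-state exists and $\mu_{N,\beta}$ is concentrated on $\CO(\chi_N)$ with $\mu_{N,\beta}(\{\chi_{N,s}\})=N(s)^{-\beta}/\zeta_N(\beta)$.

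Next I would analyze the admissible fields of tracial states. Because $\mu_{N,\beta}$ is supported on a single countable orbit, measurability in Theorem~\ref{thm:KMS} is automatic, and condition~(ii) shows that a covariant field $\{\tau_x\}$ is determined by its value $\tau:=\tau_{\chi_N}$: for $y=\chi_{N,s}$ and any $h\in\CG(S)_y$ with $r(h)=\chi_N$ one sets $\tau_y(u_g)=\tau(u_{hgh^{-1}})$, which is well defined and tracial precisely because $\tau$ is a trace (two choices of $h$ differ by an element of $\CG(S)^{\chi_N}_{\chi_N}$). Conversely every tracial state on $C^*(\CG(S)^{\chi_N}_{\chi_N})$ extends to such a field, so Theorem~\ref{thm:KMS} yields a bijection between KMS$_\beta$-states and tracial states on $C^*(\CG(S)^{\chi_N}_{\chi_N})$, which by Theorem~\ref{thm:bisimple inverse submonoids}(i) is the group C$^*$-algebra $C^*(I_\ell(\ker N)/\gamma)$. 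I would then identify these with tracial states on $C^*(\ker N)$: any trace turns each isometry $v_s$, $s\in\ker N$, into a unitary (from $v_s^*v_s=1$ the trace of $1-v_sv_s^*$ vanishes), hence factors through the canonical map $C^*(\ker N)\to C^*(I_\ell(\ker N)/\gamma)$, inducing a bijection on trace spaces. Affineness is clear from the construction, and since the resulting map is a weak$^*$-continuous bijection between weak$^*$-compact convex sets, it is an affine homeomorphism.

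It remains to extract the explicit formula~\eqref{eq:KMS3}, which I regard as the main computational obstacle. Applying the trace formula of Theorem~\ref{thm:KMS} to $f=\un_{D(\lambda_s\lambda_t^{-1},Z_t)}$ (the image of $v_s^{\phantom{*}}v_t^*$ under~\eqref{eq:groupoid-iso}) and using atomicity, only orbit points $\chi_{N,z}$ with $[t]\le[z]$ contribute; writing $[z]=[tx]$, the element $[\lambda_s\lambda_t^{-1},\chi_{N,z}]$ is an isotropy element exactly when $\lambda_s\lambda_t^{-1}\cdot\chi_{N,tx}=\chi_{N,sx}$ equals $\chi_{N,tx}$, i.e.\ $[sx]=[tx]$, which also forces $N(s)=N(t)$. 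Reparametrizing by $[x]$ via the bijection $[x]\mapsto[tx]$ onto $\{[z]:[t]\le[z]\}$ turns the weight into $N(tx)^{-\beta}/\zeta_N(\beta)=N(s)^{-\beta}N(x)^{-\beta}/\zeta_N(\beta)$. For the trace factor I would conjugate the isotropy element back to $\chi_N$, obtaining $[\lambda_{tx}^{-1}\lambda_{sx},\chi_N]$; choosing $p_x,q_x\in\ker N$ with $sxp_x=txq_x$ gives $\lambda_{tx}^{-1}\lambda_{sx}\,e_{p_x}=\lambda_{q_x}\lambda_{p_x}^{-1}$, and since $\chi_N(e_{p_x})=1$ this yields $[\lambda_{tx}^{-1}\lambda_{sx},\chi_N]=[\lambda_{q_x}\lambda_{p_x}^{-1},\chi_N]$, an isotropy element corresponding to $v_{q_x}^{\phantom{*}}v_{p_x}^*$ under the identification of $C^*(\CG(S)^{\chi_N}_{\chi_N})$ with $C^*(\ker N)$. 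Assembling the pieces produces~\eqref{eq:KMS3}, the sum converging absolutely because $\zeta_N(\beta)<+\infty$ and $\|\tau\|=1$. The bookkeeping of the covariance conjugation and the idempotent-killing step are where care is needed; everything else follows directly from the orbit concentration of $\mu_{N,\beta}$ and Theorem~\ref{thm:KMS}.
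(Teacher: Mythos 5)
Your proposal is correct and takes essentially the same route as the paper's proof: you identify $\mu_{N,\beta}$ with the atomic measure $\zeta_N(\beta)^{-1}\sum_{[s]}N(s)^{-\beta}\delta_{\chi_{N,s}}$ via the uniqueness in Proposition~\ref{prop:muN}, reduce the measurable field of traces to a single tracial state on $C^*(\CG(S)^{\chi_N}_{\chi_N})\cong C^*(I_\ell(\ker N)/\gamma)$ using Theorems~\ref{thm:KMS} and~\ref{thm:bisimple inverse submonoids}, pass to $C^*(\ker N)$ by the same GNS argument making the $v_s$ unitary, and extract \eqref{eq:KMS3} by the identical conjugation computation $\lambda_{tx}^{-1}\lambda_{sx}e_{p_x}=\lambda_{q_x}^{\phantom{1}}\lambda_{p_x}^{-1}e_{p_x}$. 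The only differences are cosmetic (you parametrize contributing orbit points by $[z]=[tx]$ with $[t]\le[z]$ rather than setting $y=tx$ directly, and you spell out the scaling verification and the measurability/covariance bookkeeping the paper leaves implicit).
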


Here $I_\ell(\ker N)$ denotes again the inverse hull of $\ker N$ without $0$. Note also that since we assume that $N$ is nontrivial, we must have $\beta_c\ge0$.

\bp
Fix $\beta>\beta_c$. By Lemma~\ref{lem:automatic_admis}, the scale $N$ satisfies condition~\eqref{eq:admis}. Hence we can apply Theorem~\ref{thm:bisimple inverse submonoids} to $T:=\ker N$, which in particular yields a bijection $S/{\sim_N} \to \CO(\chi_N)$, $[s] \mapsto \lambda_s\cdot\chi_N=\chi_{N,s}$.

Define a probability measure $\mu$ on $\widehat{E(S)}$ by $\mu=\zeta_N(\beta)^{-1}\sum_{[s]\in S/{\sim_N}}N(s)^{-\beta}\delta_{\chi_{N,s}}$. Since $\lambda_r\cdot\chi_{N,s}=\lambda_{rs}\cdot\chi_N=\chi_{N,rs}$, it is clear that $\mu$ satisfies the scaling condition~\eqref{eq:scaling}. By Proposition~\ref{prop:muN} there is at most one such measure, so $\mu=\mu_{N,\beta}$. Since $\mu_{N,\beta}$ is concentrated on the $\CG(S)$-orbit of $\chi_N$, a measurable field of tracial states $(\tau_x)_x$ as in Theorem~\ref{thm:KMS} is completely determined by one trace~$\tau_x$ for any point $x$ on the orbit, cf.~\cite{Nes}*{Corollary~1.4}. Thus we conclude that there exists an affine homeomorphism between the $\sigma^N$-KMS$_\beta$-states on $C^*(S)=C^*(\CG(S))$ and the tracial states on $C^*(\CG(S)^{\chi_N}_{\chi_N})$. But by Theorem~\ref{thm:bisimple inverse submonoids} we have $C^*(\CG(S)^{\chi_N}_{\chi_N})\cong C^*(I_\ell(\ker N)/\gamma)$.

The canonical map $C^*(\ker N)\to C^*(I_\ell(\ker N)/\gamma)$ induces a homeomorphism of the corresponding trace state spaces. This can be deduced again from Theorem~\ref{thm:KMS} applied to $\CG(\ker N)$ and the trivial dynamics, but is also easy to see directly. Indeed, if $\tau$ is a tracial state on $C^*(\ker N)$, then in the corresponding GNS-representation the partial isometries $v_s$, $s\in\ker N$, become unitaries. Hence the GNS-representation of $C^*(\ker N)$ defines a representation of $I_\ell(\ker N)/\gamma$ and therefore~$\tau$ factors through $C^*(I_\ell(\ker N)/\gamma)$.

It remains to prove formula~\eqref{eq:KMS3}. Recalling the explicit form~\eqref{eq:groupoid-iso} of the isomorphism $C^*(S)\cong C^*(\CG(S))$,
we have
\begin{equation} \label{eq:KMS2}
\varphi_{\tau,\beta}(v_s^{\phantom{*}}v_t^*)=\frac{1}{\zeta_N(\beta)}\sum_{[y]}N(y)^{-\beta}\tau(\pi_y(u_{[\lambda_s^{\phantom{1}}\lambda_t^{-1},\chi_{N,y}]})),
\end{equation}
where the summation is over $[y]\in S/{\sim_N}$ such that $\lambda_s^{\phantom{1}}\lambda_t^{-1}\cdot\chi_{N,y}=\chi_{N,y}$, $\pi_y$ denotes the isomorphism $C^*(\CG(S)^{\chi_{N,y}}_{\chi_{N,y}})\to C^*(\CG(S)^{\chi_{N}}_{\chi_{N}})$ obtained by conjugation by the element $[\lambda_y^{-1},\chi_{N,y}]\in\CG(S)^{\chi_{N}}_{\chi_{N,y}}$ and we view the trace $\tau$ on $C^*(\ker N)$ as a trace on $C^*(\CG(S)^{\chi_{N}}_{\chi_{N}})$.

Assume $[y]\in S/{\sim_N}$ contributes to~\eqref{eq:KMS2}. In other words, $y\sim_N sx\sim_N tx$ for some $x\in S$. We may take $y=tx$. Let $p_x,q_x \in \ker N$ satisfy $sxp_x=txq_x$. We claim that then
$$
[\lambda_y^{-1},\chi_{N,y}]\,[\lambda_s^{\phantom{1}}\lambda_t^{-1},\chi_{N,y}]\,[\lambda_y^{-1},\chi_{N,y}]^{-1}=[\lambda_{q_x}^{\phantom{1}}\lambda_{p_x}^{-1},\chi_N].
$$
Indeed, we have
$$
\lambda_y^{-1}\lambda_s^{\phantom{1}}\lambda_t^{-1}\lambda_ye_{p_x}=\lambda_y^{-1}\lambda_s^{\phantom{1}}\lambda_x^{\phantom{1}}e_{p_x}
=\lambda_y^{-1}\lambda_{sxp_x}^{\phantom{1}}\lambda_{p_x}^{-1}e_{p_x}
=\lambda_y^{-1}\lambda_{tx}^{\phantom{1}}\lambda_{q_x}^{\phantom{1}}\lambda_{p_x}^{-1}e_{p_x}=\lambda_{q_x}^{\phantom{1}}\lambda_{p_x}^{-1}e_{p_x}.
$$
Since $\lambda_y\cdot\chi_N=\chi_{N,y}$ and $\chi_N(e_{p_x})=1$, this proves our claim.

Recalling again the isomorphism~\eqref{eq:groupoid-iso}, but now for the semigroup $\ker N$, we conclude that
$$
N(y)^{-\beta}\tau(\pi_y(u_{[\lambda_s^{\phantom{1}}\lambda_t^{-1},\chi_{N,y}]}))=N(s)^{-\beta}N(x)^{-\beta}\tau(v_{q_x}^{\phantom{*}}v_{p_x}^*).
$$
Plugging this into~\eqref{eq:KMS2} we get formula~\eqref{eq:KMS3} for $\varphi_{\tau,\beta}$.
\ep

\begin{example}
Consider the semigroup $S=\Z_+\rtimes\N$ and the scale $N$ from Example~\ref{ex:ax+b}. The corresponding $\zeta$-function is
$$
\zeta_N(\beta)=\sum^\infty_{n=1}n^{1-\beta}=\zeta(\beta-1),
$$
where $\zeta$ is the Riemann $\zeta$-function. Therefore $\beta_c=2$. The C$^*$-algebra $C^*(\ker N)=C^*(\Z_+)$ is the Toeplitz algebra $\mathcal T$. Its tracial states are given by the probability measures on the unit circle. Namely, the trace corresponding to such a measure $\nu$ is defined by $\tau(v_k^{\phantom{*}}v_l^*)=\int_\T z^{k-l}d\nu(z)$.

Let us compute the $\sigma^N$-KMS$_\beta$-state $\varphi_{\nu,\beta}$ corresponding to $\nu$ for a fixed $\beta>2$. As $\varphi_{\nu,\beta}(v_s^{\phantom{1}}v_s^*)=N(s)^{-\beta}$ and $\varphi_{\nu,\beta}(v_s^{\phantom{1}}v_t^*)=0$ if $N(s)\ne N(t)$, the only interesting case is when $s=(c,n)$ and $t=(d,n)$, with $c\ne d$. By taking the adjoint of $v_s^{\phantom{*}}v_t^*$ if necessary, we may assume $c>d$.

Recall from Example~\ref{ex:ax+b} that $S/{\sim_N}=\bigsqcup^\infty_{m=1}\Z/m\Z$, with $[x]\in \Z/m\Z$ being the class of the element $(x,m)\in S$ (if $x\ge0$). For $[x]\in \Z/m\Z$, we have $[c+nx]=[d+nx]$ in $\Z/nm\Z$ if and only if $nm|(c-d)$, and then
$$
(c,n)(x,m)=(d,n)(x,m)\Big(\frac{c-d}{nm},1\Big)\ \ \text{in}\ \ S.
$$
By~\eqref{eq:KMS3} it follows that, for $n\ge1$ and $c>d\ge0$, we have
$$
\varphi_{\nu,\beta}(v_{(c,n)}^{\phantom{*}}v_{(d,n)}^*)=\frac{n^{-\beta}}{\zeta(\beta-1)}\sum_{m:nm|(c-d)}m^{1-\beta}\int_\T z^{\frac{c-d}{nm}}d\nu(z).
$$
This is exactly the formula from~\cite{LR}*{Theorem~7.1(3)}.

The restriction of $\varphi_{\nu,\beta}$ to $C^*(\ker N)$ is a tracial state, hence it is also given by a probability measure $\tilde\nu$ on $\T$. This measure is characterized by the property
$$
\int_\T z^kd\tilde\nu(z)=\frac{1}{\zeta(\beta-1)}\sum_{m|k}m^{1-\beta}\int_\T z^{\frac{k}{m}}d\nu(z)\ \ \text{for all}\ \ k\ge0.
$$
It is not difficult to check that it can also be described as follows.

For $n\in\N$, consider the transfer operator $\LL_n\colon C(\T)\to C(\T)$,
$$
(\LL_nf)(z)=\sum_{w:w^n=z}f(w).
$$
We then get the dual transfer operator $\LL_n^*$ of norm $n$ acting on the space $M(\T)=C(\T)^*$ of complex Borel measures on~$\T$. Then
$$
\tilde\nu=\frac{1}{\zeta(\beta-1)}\sum^\infty_{n=1}n^{-\beta}\LL_n^*\nu=\frac{1}{\zeta(\beta-1)}\prod_{p\in\mathcal P}(1-p^{-\beta}\LL_p^*)^{-1}\nu,
$$
where $\mathcal P$ is the set of prime numbers. The state  $\varphi_{\nu,\beta}$ is completely determined by the measure $\tilde\nu$ as well (this is true in greater generality, see Lemma~\ref{lem:ker-reduction}), but whereas $\nu$ can be arbitrary, $\tilde\nu$ can not: the restriction we get is that the signed measure $\prod_{p\in\mathcal P}(1-p^{-\beta}\LL_p^*)\tilde\nu$ must be positive.
\hfill$\diamondsuit$
\end{example}

\begin{example}\label{ex:AT}
Given a finite set $\Pi$ and a symmetric matrix $M=(m_{s,t})_{s,t\in\Pi}$ such that $m_{s,t}\in\N\cup\{\infty\}$, $m_{s,s}=1$ and $m_{s,t}\ge2$ for $s\ne t$, consider the corresponding Artin--Tits monoid $A^+_M$ with generators $s\in\Pi$ and relations
$$
\underbrace{sts\dots}_{m_{s,t}}=\underbrace{tst\dots}_{m_{s,t}}\ \ \text{if}\ \ m_{s,t}<\infty.
$$
It is known that $A^+_M$ is a right LCM semigroup, see~\cite{BS}*{Proposition~4.1}.

Assume next that $G$ is a discrete group acting by permutations $s\mapsto g.s$ on $\Pi$ such that $m_{g.s,g.t}=m_{s,t}$ for all $s,t\in\Pi$. Then $G$ acts on $A^+_M$ by permuting the generators $s\in\Pi$. Consider the semidirect product $S:=A^+_M\rtimes G$. It is easy to see that $S$ is a right LCM semigroup.

Consider now a map $N\colon\Pi\to(1,+\infty)$ satisfying $N(g.s)=N(s)$ for all $s\in\Pi$ and $g\in G$, and $N(s)=N(t)$ for all $s,t\in\Pi$ such that $m_{s,t}$ is an odd number. It extends to a scale on $S$ that is trivial on $G$. Then $\ker N=G$ and $S/{\sim_N}$ can be identified with $A^+_M$. Since $\Pi$ is finite and $A^+_M$ is a quotient of the free monoid on the set $\Pi$, it follows that $\beta_c<+\infty$. Specifically, $\beta_c\le\beta_0$, where $\beta_0$ is such that
$$
\sum_{s\in\Pi}N(s)^{-\beta_0}=1.
$$
By Theorem~\ref{thm:finiteKMS} we conclude that for every $\beta>\beta_c$ the $\sigma^N$-KMS$_\beta$-states on $C^*(S)$ are in an affine bijective correspondence with the tracial states on $C^*(G)$. In particular, if $G$ is either finite or abelian, then the extremal KMS$_\beta$-states are parameterized by the equivalence classes of irreducible representations of~$G$.

\smallskip

This example can be modified by replacing $G$ by a right LCM semigroup $P$ with quotient $G$. The new monoid $A^+_M\rtimes P$ might not always be LCM, but when it is, we get that for $\beta>\beta_c$ the KMS$_\beta$-states on $C^*(S)$ are in an affine bijective correspondence with the tracial states on $C^*(P)$.

For example, consider the simplest case $A^+_M=\Z^2_+$, $P=\Z_+$ and $1\in P$ acts on $\Z^2_+$ by flipping the coordinates. The corresponding monoid $\Z^2_+\rtimes\Z_+$ is the right LCM semigroup $C_3$ considered in~\cite{Gar}. It has generators $x_1$, $x_2$ and $x_3$ and relations
$$
x_1x_2=x_2x_1,\ \ x_3x_1=x_2x_3,\ \ x_3x_2=x_1x_3.
$$
We then consider a scale $N$ such that $N(x_1)=N(x_2)>1$ and $N(x_3)=1$. Then $\beta_c=0$, and the conclusion is that for every $\beta>0$ the $\sigma^N$-KMS$_\beta$-states on $C^*(C_3)$ are in an affine bijective correspondence with the tracial states on $C^*(\Z_+)$, that is, with the probability measures on $\T$.
\hfill$\diamondsuit$
\end{example}

Recall that, given a one-parameter group of automorphisms $\sigma$ of a C$^*$-algebra $A$, a $\sigma$-KMS$_\infty$-state is the weak$^*$ limit of $\sigma$-KMS$_{\beta_i}$-states $\varphi_i$ with $\beta_i\to+\infty$.

\smallskip

Letting $\beta\to+\infty$ in~\eqref{eq:KMS3} we immediately get the following.

\begin{corollary}\label{cor:KMSinfty}
Under the assumptions of Theorem~\ref{thm:finiteKMS}, there exists an affine homeomorphism between the $\sigma^N$-KMS$_\infty$-states on $C^*(S)$ and the tracial states on $C^*(\ker N)$. Namely, the KMS$_\infty$-state $\varphi_{\tau,\infty}$ corresponding to a trace $\tau$ on $C^*(\ker N)$ is given by
$$
\varphi_{\tau,\infty}(v_s^{\phantom{*}}v_t^*)=\begin{cases}\tau(v_s^{\phantom{*}}v_t^*),&\text{if}\ \ s,t\in \ker N,\\ 0,&\text{otherwise}.\end{cases}
$$
\end{corollary}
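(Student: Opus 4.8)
The plan is to obtain $\varphi_{\tau,\infty}$ as the weak$^*$ limit of the states $\varphi_{\tau,\beta}$ from Theorem~\ref{thm:finiteKMS} as $\beta\to+\infty$, reading off the limit directly from formula~\eqref{eq:KMS3}. The first observation is that, since $(\ker N)/(\ker N)^*$ is directed, any two elements of $\ker N$ are $\sim_N$-equivalent, so $\ker N$ is a single class $[e]\in S/{\sim_N}$; moreover it is the unique class on which $N$ takes the value $1$, every other class $[s]$ having $N(s)>1$. Dominating the tail by $N(s)^{-\beta_0}$ for a fixed $\beta_0\in(\beta_c,\beta)$, we get $\zeta_N(\beta)\to1$ as $\beta\to+\infty$.

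Next I would evaluate the limit of $\varphi_{\tau,\beta}(v_sv_t^*)$ on the spanning elements. If $N(s)>1$ (and symmetrically if $N(t)>1$), then, bounding $|\tau(v_{q_x}v_{p_x}^*)|\le1$ and $\sum_{[x]}N(x)^{-\beta}\le\zeta_N(\beta)$ in~\eqref{eq:KMS3}, one has $|\varphi_{\tau,\beta}(v_sv_t^*)|\le N(s)^{-\beta}\to0$; the case $N(s)\ne N(t)$ is already zero term by term, since $[sx]=[tx]$ forces $N(s)=N(t)$. If instead $s,t\in\ker N$, then the summand indexed by the dominant class $[x]=[e]$ survives: choosing $p_e,q_e\in\ker N$ with $sp_e=tq_e$ and using that in the trace picture the generators $v_r$, $r\in\ker N$, are unitaries, traciality gives $\tau(v_{q_e}v_{p_e}^*)=\tau(v_sv_t^*)$, while every other class contributes a factor $N(x)^{-\beta}\to0$ and these vanish in aggregate, their sum being at most $\zeta_N(\beta)-1\to0$. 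Hence $\varphi_{\tau,\beta}(v_sv_t^*)$ converges to the expression in the statement. As the $\varphi_{\tau,\beta}$ are states converging pointwise on the dense $*$-subalgebra $\operatorname{span}\{v_sv_t^*\}$, they converge weak$^*$ to a state $\varphi_{\tau,\infty}$, which is therefore a KMS$_\infty$-state with the asserted values.

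It remains to check that $\tau\mapsto\varphi_{\tau,\infty}$ is an affine homeomorphism onto all KMS$_\infty$-states. Affineness is immediate from linearity of~\eqref{eq:KMS3} in $\tau$, and injectivity follows since restricting $\varphi_{\tau,\infty}$ to $C^*(\ker N)$ returns $\tau$. Continuity (weak$^*$ to weak$^*$) is clear, so once surjectivity is established the map is a continuous bijection from the compact trace space onto a Hausdorff space, hence a homeomorphism. For surjectivity, I would take an arbitrary KMS$_\infty$-state $\varphi$, write it as a weak$^*$ limit of KMS$_{\beta_i}$-states with $\beta_i\to+\infty$, which by Theorem~\ref{thm:finiteKMS} are of the form $\varphi_{\tau_i,\beta_i}$, and pass to a subnet along which $\tau_i$ converges weak$^*$ to a trace $\tau$ (using compactness of the trace state space). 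The point to watch, and the only real subtlety, is that for $s,t\in\ker N$ the non-leading terms in~\eqref{eq:KMS3} decay \emph{uniformly in $\tau_i$} --- they are bounded by $\zeta_N(\beta_i)-1$ --- so that $\varphi(v_sv_t^*)=\lim_i\tau_i(v_sv_t^*)=\tau(v_sv_t^*)$, while the bound $|\varphi_{\tau_i,\beta_i}(v_sv_t^*)|\le N(s)^{-\beta_i}$ forces $\varphi(v_sv_t^*)=0$ otherwise. Thus $\varphi=\varphi_{\tau,\infty}$, and this uniform control is exactly the step where the argument is more than a formal passage to the limit.
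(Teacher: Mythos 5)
Your proof is correct and follows exactly the route of the paper, whose entire proof is the remark that one ``immediately'' gets the corollary by letting $\beta\to+\infty$ in~\eqref{eq:KMS3}. The details you supply --- $\zeta_N(\beta)\to1$ by dominated convergence, the identity $\tau(v_{q_e}^{\phantom{*}}v_{p_e}^*)=\tau(v_s^{\phantom{*}}v_t^*)$ for $s,t\in\ker N$ via unitarity of the $v_r$ in the GNS representation of a trace, and, for surjectivity, the tail bound $\zeta_N(\beta_i)-1\to0$ uniform in $\tau_i$ combined with weak$^*$ compactness of the trace simplex --- are precisely the steps the paper leaves implicit, so this is the same argument, just written out in full.
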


Let us now for completeness determine the structure of ground states on $(C^*(S),\sigma_N)$, for which we will apply \cite{LLN}*{Theorem~1.4} (although this is not really needed once we accept Corollary~\ref{cor:honest subalgebra} above for $T=\ker N$).

Recall from \cite{LLN} that the \emph{boundary set} of the cocycle $c_N$ defining the dynamics $\sigma^N$ on  $C^*(S)=C^*(\CG(S))$ is given by
$$
Z:= \{ x \in \CG(S)^{(0)} \mid c_N \geq 0 \text{ on } \CG(S)_x \}.
$$
The corresponding reduction groupoid $\CG(S)_Z$ is referred to as the \emph{boundary groupoid}.

By $c^{-1}_N(0)$ we denote the kernel groupoid for $c_N$.

\begin{lemma}\label{lem:boundary and kernel groupoids}
Let $S$ be a right LCM semigroup and $N$ be a scale on $S$ satisfying~\eqref{eq:admis}. Then
\begin{enumerate}
\item[(i)] $c_N^{-1}(0)= \{ [\lambda_s^{\phantom{*}}\lambda_t^{-1},\chi] \in \CG(S) \mid N(s)=N(t)\}$, and this groupoid is \'{e}tale;
\item[(ii)] $\CG(S)_Z = \iota(\CG(\ker N))$, where $\iota\colon \CG(\ker N)\to\CG(S)$ is the embedding given by Theorem~\ref{thm:bisimple inverse submonoids}.
\end{enumerate}
\end{lemma}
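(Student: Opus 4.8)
The plan is to handle the two parts in turn, with (ii) resting on the explicit identification of the boundary set $Z$ together with Theorem~\ref{thm:bisimple inverse submonoids}.

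For (i), I would first record that $c_N$ is genuinely well defined on $\CG(S)$: if $[\lambda_{s_1}\lambda_{t_1}^{-1},\chi]=[\lambda_{s_2}\lambda_{t_2}^{-1},\chi]$, then Lemma~\ref{lem:equivalence relation explained} produces $r_1,r_2\in S$ with $s_1r_1=s_2r_2$ and $t_1r_1=t_2r_2$, and applying the homomorphism $N$ gives $N(s_1)/N(t_1)=N(s_2)/N(t_2)$, so the quantity $\log N(s)-\log N(t)$ is independent of the chosen representative. The set equality in (i) is then immediate, since $c_N([\lambda_s\lambda_t^{-1},\chi])=0$ precisely when $N(s)=N(t)$. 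For the \'etaleness, the key point to isolate is that $c_N$ is \emph{locally constant}, not merely continuous: on each basic open set $D(\lambda_s\lambda_t^{-1},U)$ it takes the constant value $\log N(s)-\log N(t)$. Hence $c_N^{-1}(0)$ is open in $\CG(S)$; being the kernel of an $\R$-valued cocycle it is a subgroupoid, and it contains the full unit space (units have $c_N=0$). An open subgroupoid of an \'etale groupoid is \'etale, since the range map restricts to a local homeomorphism onto the unchanged unit space and the Hausdorffness of the units and of the range fibers is inherited from $\CG(S)$. This settles (i), and the non-Hausdorff nature of $\CG(S)$ causes no trouble here.

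For (ii), I would reduce everything to proving the set equality $Z=\iota(\CG(\ker N)^{(0)})$ inside $\widehat{E(S)}$. Indeed, Theorem~\ref{thm:bisimple inverse submonoids} already identifies $\iota(\CG(\ker N))$ with the reduction of $\CG(S)$ by $\iota(\CG(\ker N)^{(0)})$, so once the base spaces agree we obtain $\CG(S)_Z=\iota(\CG(\ker N))$ for free. Recall that $\iota$ on the unit space is the extension $\chi\mapsto\chi'$ defined before Theorem~\ref{thm:bisimple inverse submonoids}, so that $\iota(\CG(\ker N)^{(0)})$ is exactly the set of $\chi\in\widehat{E(S)}$ with $\chi(e_s)=0$ for all $s\in S\setminus\ker N$.

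The heart of the matter is then the two inclusions, and the crux is a single well-chosen witnessing arrow. If $\chi$ vanishes on $e_s$ for every $s\notin\ker N$, then for any arrow $[\lambda_s\lambda_t^{-1},\chi]$ with source $\chi$ the defining condition $\chi(e_t)=1$ forces $t\in\ker N$, whence $N(t)=1$ and $c_N=\log N(s)\geq0$; thus $\chi\in Z$. Conversely, if $\chi\in Z$ yet $\chi(e_t)=1$ for some $t\notin\ker N$, then the single arrow $[\lambda_e\lambda_t^{-1},\chi]=[\lambda_t^{-1},\chi]$ has source $\chi$ and $c_N=-\log N(t)<0$, contradicting $\chi\in Z$; so $\chi$ must vanish on all such $e_t$. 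This yields $Z=\iota(\CG(\ker N)^{(0)})$ and completes (ii). The main obstacle I anticipate is purely bookkeeping: pinning down the exact description of $\iota(\CG(\ker N)^{(0)})$ and matching the convention for the source fibers $\CG(S)_x$ entering the definition of $Z$. Everything substantive is carried by the arrow $[\lambda_t^{-1},\chi]$ witnessing negativity of $c_N$, and by the local constancy of $c_N$ used in (i).
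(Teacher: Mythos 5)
Your proposal is correct and takes essentially the same route as the paper: for (i) you observe that $c_N$ is (locally) constant on the basic open sets so that $c_N^{-1}(0)$ is an open subgroupoid of the \'etale groupoid $\CG(S)$, and for (ii) you reduce via Theorem~\ref{thm:bisimple inverse submonoids} to the set equality $Z=\iota(\CG(\ker N)^{(0)})$, proved exactly as in the paper by noting that an arrow $[\lambda_s^{\phantom{1}}\lambda_t^{-1},\chi]$ with source $\chi$ requires $\chi(e_t)=1$, and using the witnessing arrow with $s=e$ to force $t\in\ker N$. The details you make explicit (well-definedness of $c_N$ via Lemma~\ref{lem:equivalence relation explained} and the source-fiber convention for $\CG(S)_x$) are points the paper leaves implicit, but the substance is identical.
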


\begin{proof}
The description of $c_N^{-1}(0)$ in (i) is clear. This groupoid is \'{e}tale because $\CG(S)$ is and $c_N^{-1}(0)$ is an open subset of $\CG(S)$.

To prove (ii), by Theorem~\ref{thm:bisimple inverse submonoids} we just have to check that $Z=\iota(\CG(\ker N)^{(0)})$. Suppose $\psi \in \widehat{E(S)}$ satisfies $c_N([\lambda_s^{\phantom{*}}\lambda_t^{-1},\psi]) = \log N(s)-\log N(t)\geq 0$ for all $s,t\in S$ with $\psi(e_t)=1$. This forces $t \in \ker N$, so that $\psi$ vanishes on the elements $e_s$ for $s\in S\setminus\ker N$, that is, $\psi=\chi'$ for some $\chi \in \widehat{E(\ker N)}$. Conversely, it is clear that for every $\chi\in\widehat{E(\ker N)}$ the character~$\chi'$ belongs to~$Z$.
\end{proof}

In view of this lemma we can apply \cite{LLN}*{Theorem~1.4} (extended to the non-Hausdorff case) and get the following result.

\begin{proposition}\label{prop:ground states}
Let $S$ be a countable right LCM semigroup and $N$ be a scale on $S$ satisfying~\eqref{eq:admis}. Then there exists an affine homeomorphism between the states on $C^*(\ker N)$ and the ground states on $C^*(S)$ with respect to $\sigma^N$. Namely, the ground state corresponding to a state $\psi$ on $C^*(\ker N)$ is $\psi\circ E$, where $E\colon C^*(S)\to C^*(\ker N)$ is the conditional expectation defined in Corollary~\ref{cor:honest subalgebra}.
\end{proposition}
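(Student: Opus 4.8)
The plan is to deduce the statement directly from \cite{LLN}*{Theorem~1.4}, whose applicability has already been prepared by Lemma~\ref{lem:boundary and kernel groupoids}. That theorem identifies the ground states on $C^*(\CG(S))$ with respect to $\sigma^N$ with the states on the C$^*$-algebra of the boundary groupoid (equivalently, of $\CG(S)_Z\cap c_N^{-1}(0)$, which coincide here since $\CG(S)_Z\subseteq c_N^{-1}(0)$ by Lemma~\ref{lem:boundary and kernel groupoids}), the bijection being given by composing a state $\psi$ on $C^*(\CG(S)_Z)$ with the completely positive map $C^*(\CG(S))\to C^*(\CG(S)_Z)$ induced by restriction of functions from $\CG(S)$ to $\CG(S)_Z$. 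First I would record that, since \cite{LLN} is stated for Hausdorff groupoids while $\CG(S)$ need not be Hausdorff, one must invoke the non-Hausdorff version of that theorem; as with Theorem~\ref{thm:KMS} and Corollary~\ref{cor:honest subalgebra}, the proof goes through because only functions supported on compact Hausdorff sets enter the computations.

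Next I would translate the abstract conclusion into semigroup terms using Lemma~\ref{lem:boundary and kernel groupoids}(ii): the boundary groupoid is $\CG(S)_Z=\iota(\CG(\ker N))$, and the isomorphism $C^*(\CG(\ker N))\cong C^*(\ker N)$ from the groupoid picture applied to the right LCM monoid $\ker N$ then yields $C^*(\CG(S)_Z)\cong C^*(\ker N)$. Thus the states on $C^*(\CG(S)_Z)$ are exactly the states on $C^*(\ker N)$, which is the left-hand side of the desired correspondence.

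The remaining point is to identify the restriction map furnished by \cite{LLN}*{Theorem~1.4} with the conditional expectation $E$ of Corollary~\ref{cor:honest subalgebra}. Both are the continuous extension of the same operation on $C_c(\CG(S))$, namely restriction of functions to the reduction of $\CG(S)$ by $\iota(\CG(\ker N)^{(0)})$: indeed, $Z=\iota(\CG(\ker N)^{(0)})$ by the proof of Lemma~\ref{lem:boundary and kernel groupoids}(ii), so the boundary reduction $\CG(S)_Z$ and the reduction used to define $E$ literally coincide. Reading $E$ off on the spanning elements via Corollary~\ref{cor:honest subalgebra} then gives $E(v_sv_t^*)=v_sv_t^*$ for $s,t\in\ker N$ and $0$ otherwise, so that the ground state attached to $\psi$ is $\psi\circ E$ with the asserted values.

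The main obstacle, and the one worth spelling out, is conceptual rather than computational: explaining why \emph{every} state on $C^*(\ker N)$ produces a ground state, in contrast with the KMS$_\infty$ situation of Corollary~\ref{cor:KMSinfty}, where only tracial states survive. The key is again Lemma~\ref{lem:boundary and kernel groupoids}: since $\CG(S)_Z=\iota(\CG(\ker N))\subseteq c_N^{-1}(0)$, the cocycle $c_N$ vanishes identically on the boundary groupoid, so the dynamics restricted to the boundary is trivial and the ground state condition imposes no invariance or trace constraint on $\psi$. Verifying that this vanishing is precisely what makes the hypotheses of \cite{LLN}*{Theorem~1.4} collapse to ``an arbitrary state on $C^*(\CG(S)_Z)$'' is the one step that genuinely uses the structure established here, and it is where I would focus the write-up.
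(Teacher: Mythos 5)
Your proposal is correct and takes essentially the same route as the paper: the paper obtains Proposition~\ref{prop:ground states} precisely by applying \cite{LLN}*{Theorem~1.4}, extended to the non-Hausdorff case, in view of Lemma~\ref{lem:boundary and kernel groupoids}, with the boundary groupoid identified as $\iota(\CG(\ker N))$ and the restriction map identified with the conditional expectation $E$ of Corollary~\ref{cor:honest subalgebra}. The paper gives no further details beyond this citation, so your write-up (including the observation that $c_N$ vanishes automatically on $\CG(S)_Z$, which is why arbitrary states rather than only traces occur) is if anything more explicit than the original.
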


\begin{corollary}
Under the assumptions of Theorem~\ref{thm:finiteKMS}, every ground state on $C^*(S)$ with respect to $\sigma^N$ is a KMS$_\infty$-state if and only if $\ker N = S^*$ and $S^*$ is abelian.
\end{corollary}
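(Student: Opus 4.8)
The plan is to play the two parametrizations we already have off against each other. By Proposition~\ref{prop:ground states} the ground states are exactly the functionals $\psi\circ E$ for $\psi$ a state on $C^*(\ker N)$, while by Corollary~\ref{cor:KMSinfty} the KMS$_\infty$-states are exactly the $\varphi_{\tau,\infty}$ for $\tau$ a \emph{tracial} state on $C^*(\ker N)$. The first thing I would observe is that, comparing the defining formula for $\varphi_{\tau,\infty}$ with the formula for the conditional expectation $E$ in Corollary~\ref{cor:honest subalgebra}, one has $\varphi_{\tau,\infty}=\tau\circ E$. Since a ground state $\psi\circ E$ remembers $\psi$ as its restriction to $C^*(\ker N)$, this identity shows that $\psi\circ E$ is a KMS$_\infty$-state precisely when $\psi$ is a trace. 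Hence \emph{every} ground state is a KMS$_\infty$-state if and only if every state on $C^*(\ker N)$ is tracial.

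The next step is the standard fact that a C$^*$-algebra has only tracial states if and only if it is commutative: one direction is trivial, and for the other, if $[a,b]\neq0$ then, since states separate the points of a C$^*$-algebra, some state fails to vanish on $[a,b]$ and is therefore not a trace. This reduces the corollary to the purely algebraic assertion that $C^*(\ker N)$ is commutative if and only if $\ker N=S^*$ and $S^*$ is abelian.

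The easy implication is that if $\ker N=S^*$ and $S^*$ is abelian, then $C^*(\ker N)=C^*(S^*)$ is the full group C$^*$-algebra of an abelian group and hence commutative. The heart of the argument is the converse. Assuming $C^*(\ker N)$ commutative, each isometry $v_s$ with $s\in\ker N$ satisfies $v_s^{\phantom{*}}v_s^*=v_s^*v_s^{\phantom{*}}=1$, so it is a unitary. Viewing $C^*(\ker N)$ unitally inside $C^*(S)$ via Corollary~\ref{cor:honest subalgebra} and using that $v_s^{\phantom{*}}v_s^*$ corresponds to $\un_{sS}$ under Lemma~\ref{lem:lattice-set}, the equality $v_s^{\phantom{*}}v_s^*=1$ forces $sS=S$, which in a left cancellative monoid means $s\in S^*$. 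Thus $\ker N\subseteq S^*$, and since units automatically lie in $\ker N$ (as $N(u)N(u^{-1})=1$ with $N\ge1$), we get $\ker N=S^*$. Then $C^*(\ker N)=C^*(S^*)$ is commutative, and a group C$^*$-algebra is commutative only when the group is abelian, so $S^*$ is abelian.

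The step I expect to require the most care is this last one: correctly identifying $v_s^{\phantom{*}}v_s^*=1$ with $s\in S^*$ (in particular checking that $sS=S$ really forces invertibility via $ss'=e\Rightarrow s'(ss')=s's\Rightarrow s's=e$ by left cancellation) and making sure that, once $\ker N=S^*$ is a group, the semigroup C$^*$-algebra $C^*(\ker N)$ genuinely coincides with the full group C$^*$-algebra $C^*(S^*)$, so that commutativity transfers faithfully to the group. Everything else is bookkeeping built on $\varphi_{\tau,\infty}=\tau\circ E$, which is the observation that collapses the problem.
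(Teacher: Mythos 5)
Your proof is correct and takes essentially the same route as the paper: the paper likewise observes that the ground state attached to a state $\psi$ on $C^*(\ker N)$ is a KMS$_\infty$-state if and only if $\psi$ is tracial, and then reduces everything to the ``simple claim'' that every state on $C^*(\ker N)$ is tracial iff $C^*(\ker N)$ is abelian iff $\ker N$ is an abelian group (which, since $S^*\subseteq\ker N$ always, is the same as $\ker N=S^*$ with $S^*$ abelian) --- you merely spell out the details the paper leaves implicit. One small slip in your parenthetical cancellation chain: $s'(ss')=s'e=s'$, not $s's$; the correct left-cancellation argument is $s(s's)=(ss')s=s=se$, whence $s's=e$.
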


\bp
The descriptions of KMS$_\infty$-states in Corollary~\ref{cor:KMSinfty} and ground states in Proposition~\ref{prop:ground states} imply that the ground state corresponding to a state $\psi$ on $C^*(\ker N)$ is a KMS$_\infty$-state if and only if $\psi$ is tracial.  Therefore the corollary follows from the simple claim that every state on $C^*(\ker N)$ is tracial, or equivalently, $C^*(\ker N)$ is abelian, if and only if $\ker N$ is an abelian group.
\ep

\section{Uniqueness of KMS-states} \label{sec:unique}

We continue to consider a right LCM semigroup $S$ and a scale $N\colon S\to[1,+\infty)$. In this section our goal is to understand when for a fixed $\beta>0$ there exists exactly one $\sigma^N$-KMS$_\beta$-state on $C^*(S)$. Due to Remark~\ref{rem:beta1enough}, we may assume $\beta=1$ without loss of generality.

\smallskip

Assume that $S$ is countable and that $\widehat{E(S)}$ admits a probability measure $\mu_N=\mu_{N,1}$ with $\mu_N(Z_s)=N(s)^{-1}$ (according to Proposition~\ref{prop:muN}, such a measure is then unique). Equivalently, we can assume that there is at least one $\sigma^N$-KMS$_1$-state.
In view of Lemma~\ref{lem:neccessary}, it follows that the scale $N$ satisfies condition~\eqref{eq:admis}.

For $a,b\in S$, consider the sets
$$
\Omega^{a,b}_\triv:=\{\chi\in \widehat{E(S)}\mid [\lambda_a^{\phantom{1}}\lambda_b^{-1},\chi]=\chi\}\ \ \text{and}\ \
\Omega^{a,b}_\fix:=\{\chi\in \widehat{E(S)}\mid \lambda_a^{\phantom{1}}\lambda_b^{-1}\cdot\chi=\chi\}.
$$
In other words, $\Omega^{a,b}_\fix$ consists of the points fixed by $\lambda_a^{\phantom{1}}\lambda_b^{-1}$ and $\Omega^{a,b}_\triv\subset\Omega^{a,b}_\fix$ is the subset of trivially fixed points, in the sense that $\lambda_a^{\phantom{1}}\lambda_b^{-1}$ defines the unit element of the isotropy group.
Then the two KMS$_1$-states $\varphi'$ and $\varphi''$ from~\eqref{eq:2states 1} and \eqref{eq:2states 2} can be described in terms of the generators of~$C^*(S)$~by
\begin{equation}\label{eq:phis}
\varphi'(v_a^{\phantom{*}}v_b^*)=\mu_N(\Omega^{a,b}_\triv)\ \ \text{and}\ \ \varphi''(v_a^{\phantom{*}}v_b^*)=\mu_N(\Omega^{a,b}_\fix).
\end{equation}

By Corollary~\ref{cor:uniqueKMS}, a $\sigma^N$-KMS$_1$-state on $C^*(S)$ is unique if and only if
$$
\mu_N(\Omega^{a,b}_\fix\setminus\Omega^{a,b}_\triv)=0\ \ \text{for all}\ \ a,b\in S,\ \ a\ne b,
$$
that is, $\varphi'=\varphi''$.

\smallskip

Let us try to compute the measures of the sets $\Omega^{a,b}_\triv$ and $\Omega^{a,b}_\fix$.

We start with the easier case of~$\Omega^{a,b}_\triv$.
For $a,b\in S$, define
$$
B^{a,b}:=\{[s]\in S/{\sim_N}\mid s'=at=bt\ \text{for some}\ s'\sim_Ns\ \text{and}\ t\in S\}.
$$

\begin{lemma} \label{lem:Aab}
For any $a,b\in S$, we have
$$
\Omega^{a,b}_\triv=\bigcup_{[s]\in B^{a,b}}Z_s
$$
modulo a set of measure zero. Hence
\begin{equation}\label{eq:Aab}
\mu_N(\Omega^{a,b}_\triv)=\lim_{[F]}\sum_{\emptyset\neq K\subset [F]} (-1)^{\vert K\vert+1} N(q_K)^{-1},
\end{equation}
where the limit is taken over the finite subsets $[F]$ of $B^{a,b}$ ordered by inclusion.
\end{lemma}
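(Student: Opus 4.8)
The plan is to first establish a pointwise description of $\Omega^{a,b}_\triv$ and then convert it into a measure computation by inclusion--exclusion.

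First I would unravel the condition $[\lambda_a^{\phantom{1}}\lambda_b^{-1},\chi]=\chi$ using Lemma~\ref{lem:equivalence relation explained}. Writing the unit $\chi$ as $[\lambda_b^{\phantom{1}}\lambda_b^{-1},\chi]$ (which requires $\chi(e_b)=1$, as is implicit in membership of $\Omega^{a,b}_\triv$) and applying the lemma to the pair $(\lambda_a^{\phantom{1}}\lambda_b^{-1},\chi)$ and $(\lambda_b^{\phantom{1}}\lambda_b^{-1},\chi)$, the condition becomes: there exist $r_1,r_2\in S$ with $ar_1=br_2$, $br_1=br_2$ and $\chi(e_{br_1})=1$. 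Left cancellation forces $r_1=r_2$, so this simplifies to the existence of $r\in S$ with $ar=br$ and $\chi\in Z_{ar}$. Hence, exactly,
$$
\Omega^{a,b}_\triv=\bigcup_{r\in S:\,ar=br}Z_{ar}.
$$

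Next I would match this union with the one indexed by $B^{a,b}$. By definition the elements $ar=br$ are precisely the representatives of the special form $s'=at=bt$ occurring in the definition of $B^{a,b}$, so every $Z_{ar}$ above equals some $Z_{s'}$ with $[s']\in B^{a,b}$, and conversely each $[s]\in B^{a,b}$ contains such an $s'$. The only discrepancy between $\bigcup_{r}Z_{ar}$ and $\bigcup_{[s]\in B^{a,b}}Z_s$ is the choice of representative within a class, and this is exactly where the qualifier ``modulo a set of measure zero'' enters. The key observation is that $s\sim_N s'$ implies $Z_s=Z_{s'}$ up to a $\mu_N$-null set: choosing $c,d\in\ker N$ with $sc=s'd$, we have $Z_{sc}\subset Z_s$ and, since $N(c)=1$,
$$
\mu_N(Z_s\setminus Z_{sc})=\mu_N(Z_s)-\mu_N(Z_{sc})=N(s)^{-1}-N(sc)^{-1}=0,
$$
and likewise for $s'$, whence $Z_s=Z_{sc}=Z_{s'd}=Z_{s'}$ modulo null sets. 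Combining this with the previous display yields $\Omega^{a,b}_\triv=\bigcup_{[s]\in B^{a,b}}Z_s$ up to a null set.

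Finally, to obtain \eqref{eq:Aab} I would compute the measure of this countable union by inclusion--exclusion and continuity from below. For a finite $[F]\subset B^{a,b}$, Lemma~\ref{lem:lattice-set} together with the quasi-lattice structure of Lemma~\ref{lem:quasi-lattice} gives $\bigcap_{[s]\in K}Z_s=Z_{q_K}$ for each $\emptyset\ne K\subset[F]$, so $\mu_N\big(\bigcap_{[s]\in K}Z_s\big)=N(q_K)^{-1}$, with the convention $N(\infty)^{-1}=0$ in the case where the join is $\infty$ and the intersection is empty. Inclusion--exclusion then gives
$$
\mu_N\Big(\bigcup_{[s]\in[F]}Z_s\Big)=\sum_{\emptyset\ne K\subset[F]}(-1)^{\vert K\vert+1}N(q_K)^{-1},
$$
and letting $[F]$ increase to $B^{a,b}$ (possible since $S$, hence $B^{a,b}$, is countable) the sets $\bigcup_{[s]\in[F]}Z_s$ increase to $\bigcup_{[s]\in B^{a,b}}Z_s$, so continuity of $\mu_N$ yields \eqref{eq:Aab}. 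The main obstacle is the bookkeeping of the second paragraph: one must recognize that the pointwise-exact union $\bigcup_{r}Z_{ar}$ and the class-indexed union $\bigcup_{[s]\in B^{a,b}}Z_s$ agree only after discarding a null set, and isolate the representative-independence statement $Z_s=Z_{s'}\ (\mathrm{mod}\ \mu_N)$ for $s\sim_N s'$ that makes the two descriptions coincide; the first and third paragraphs are then routine applications of Lemma~\ref{lem:equivalence relation explained} and of inclusion--exclusion.
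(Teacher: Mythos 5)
Your proof is correct and follows essentially the same route as the paper: the exact description $\Omega^{a,b}_\triv=\bigcup_{r:\,ar=br}Z_{ar}$ via Lemma~\ref{lem:equivalence relation explained}, the representative-independence $Z_s=Z_{s'}$ modulo $\mu_N$-null sets for $s\sim_N s'$ (which the paper records in the remark right after the lemma statement), and then a limit over finite subsets $[F]$ of $B^{a,b}$. The only cosmetic difference is the last step: the paper passes to the complement $\bigcap_{[F]}Z_{e,F}$ and invokes the already-established formula~\eqref{eq:muN}, whereas you rederive the same inclusion--exclusion identity directly from $\bigcap_{[s]\in K}Z_s=Z_{q_K}$ and continuity from below; the two computations are identical in content.
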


Note that the formulation of the lemma is meaningful, since if $s\sim_N s'$ then $Z_s=Z_{s'}$ modulo a set of measure zero.

\bp
By Lemma~\ref{lem:equivalence relation explained} we have $[\lambda_a^{\phantom{1}}\lambda_b^{-1},\chi]=\chi$ if and only if there is $t\in S$ such that $at=bt$ and $\chi\in Z_{bt}$. Therefore
$$
\Omega^{a,b}_\triv=\bigcup_{s\in \tilde B^{a,b}} Z_s,
$$
where $\tilde B^{a,b}$ consists of the elements $s$ such that $s=at=bt$ for some $t$. As the image of $\tilde B^{a,b}$ in $S/{\sim_N}$ is $B^{a,b}$, this proves the first statement of the lemma.

The complement of $\bigcup_{[s]\in B^{a,b}}Z_s$ is the intersection $\bigcap_{[F]\ssubset B^{a,b}}Z_{e,F}$.
Hence
$$
\mu_N(\Omega^{a,b}_\triv)=1-\lim_{[F]\ssubset B^{a,b}}\mu_N(Z_{e,F}),
$$
from which we get the required expression for $\mu_N(\Omega^{a,b}_\triv)$ by applying~\eqref{eq:muN}.
\ep

\begin{remark}
From the proof we see that the set $\Omega^{a,b}_\triv$ is open. In general it is a proper subset of the interior of $\Omega^{a,b}_\fix$. For example, if $S$ is a group, then $Z_e=\widehat{E(S)}$ consists of one point, $\Omega^{a,b}_\fix=Z_e$ for all $a,b$, but $\Omega^{a,b}_\triv=\emptyset$ for $a\ne b$.

\end{remark}

The formula for $\mu_N(\Omega^{a,b}_\fix)$ is more complicated. Recall that the partial order on $S/{\sim_N}$ is defined by~\eqref{eq:order}.  Let us introduce the following notation. For every finite subset $F \ssubset S$ and $[s]\in [F]\ssubset S/{\sim_N}$, denote by $[F_s]\subset [F]$ the subset of elements strictly larger than $[s]$.

Next, given $a,b\in S$ such that $aS\cap bS\ne\emptyset$ and a finite set $[F]\subset\{[s]\in S/{\sim_N}:[a]\vee[b]\le[s]\}$, denote by $T^{a,b}_{[F]}$ the set of all elements $[s]\in [F]$ such that there is $[t]\in S/{\sim_N}$ satisfying
\begin{equation}\label{eq:t conditions}
[s]\le a[t],\ \ [s]\le b[t],\ \ \text{but}\ \ [r]\not\le a[t],\ [r]\not\le b[t]\ \ \text{for all}\ \ [r]\in [F_s].
\end{equation}

Equivalently, this set can be described as follows. Recall from Lemma~\ref{lem:action} that $S$ acts on $S/{\sim_N}$ by injective maps. The assumption that all elements $[s]\in [F]$ satisfy $[s]\ge [a]\vee [b]$ implies that we have well-defined elements $a^{-1}[s]$ and $b^{-1}[s]$. It follows that, for every $[s] \in T^{a,b}_{[F]}$, the smallest $[t]$ with \eqref{eq:t conditions} is given by $a^{-1}[s]\vee b^{-1}[s]$. Therefore  $T^{a,b}_{[F]}$ consists of the elements $[s]\in [F]$ such that
$$
a^{-1}[s]\vee b^{-1}[s]<\infty\ \ \text{and}\ \ a^{-1}[r]\not\le a^{-1}[s]\vee b^{-1}[s],\  b^{-1}[r]\not\le a^{-1}[s]\vee b^{-1}[s] \ \ \text{for all}\ \ [r]\in [F_s].
$$

\begin{lemma}\label{lem:Fab}
Assume $a,b\in S$ are such that $aS\cap bS\ne\emptyset$. Then, modulo a set of measure zero, we have
$$
\Omega^{a,b}_\fix=\bigcap_{[F]}\bigcup_{[s]\in T^{a,b}_{[F]}}Z_{s,F_s},
$$
where the intersection is over all finite  $\vee$-closed subsets $[F]$ of $\{[t]\in S/{\sim_N}:[a]\vee[b]\le[t]\}$ containing $[a]\vee [b]$. Hence
\begin{equation}\label{eq:Fab}
\mu_N(\Omega^{a,b}_\fix)=\lim_{[F]}\sum_{[s]\in T^{a,b}_{[F]}}\left(N(s)^{-1}+\sum_{\emptyset\neq K\subset [F_s]} (-1)^{\vert K\vert} N(q_K)^{-1}\right),
\end{equation}
where the limit is over the finite sets $[F]$ as above ordered by inclusion.
\end{lemma}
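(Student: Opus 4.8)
The plan is to first reduce the dynamical fixed-point condition defining $\Omega^{a,b}_\fix$ to a symmetric pointwise condition, then match that condition level-by-level against the combinatorics of the quasi-lattice $S/{\sim_N}$, and finally read off the measure from \eqref{eq:muN}. The conceptual heart is the reduction. Since the action of $I_\ell(S)$ on $\widehat{E(S)}$ satisfies $(xy)\cdot\chi=x\cdot(y\cdot\chi)$ and $\lambda_a^{-1}\lambda_a=1$ (as $\lambda_a$ is defined on all of $S$), a point $\chi$ is fixed by $\lambda_a^{\phantom{1}}\lambda_b^{-1}=\lambda_a(\lambda_b^{-1})$ exactly when $\chi\in Z_a\cap Z_b$ (this is $Z_r$ with $aS\cap bS=rS$ by Lemma~\ref{lem:lattice-set}, so the hypothesis $aS\cap bS\neq\emptyset$ is exactly what makes this nonempty) and $\lambda_a^{-1}\cdot\chi=\lambda_b^{-1}\cdot\chi$. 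Spelling this out via $(\lambda_a^{-1}\cdot\chi)(e_w)=\chi(e_{aw})$, I would record
\[
\Omega^{a,b}_\fix=\{\chi\in Z_a\cap Z_b\mid \chi(e_{aw})=\chi(e_{bw})\ \text{for all}\ w\in S\},
\]
turning a fixed-point condition into a compatibility condition testable cylinder by cylinder.

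Next I would set up the combinatorial side. For a finite $\vee$-closed $[F]$ containing $[a]\vee[b]$, Lemmas~\ref{lem:quasi-lattice} and~\ref{lem:lattice-set} show, modulo $\mu_N$-null sets (since $Z_s=Z_{s'}$ off a null set whenever $s\sim_N s'$), that the atoms $Z_{s,F_s}$, $[s]\in[F]$, partition $Z_a\cap Z_b$, the one containing $\chi$ being indexed by $[s]=\max\{[u]\in[F]\mid\chi(e_u)=1\}$. For the inclusion $\Omega^{a,b}_\fix\subset\bigcup_{[s]\in T^{a,b}_{[F]}}Z_{s,F_s}$ I would take $\chi$ in the atom at $[s]$ and exhibit the witness $[t]:=a^{-1}[s]\vee b^{-1}[s]$ of \eqref{eq:t conditions}: applying the compatibility identity to $[u]=a^{-1}[s]$ and $[v]=b^{-1}[s]$ gives $\chi(e_{a[t]})=\chi(e_{b[t]})=1$, so $[t]<\infty$, and if some $[r]\in[F_s]$ had $a^{-1}[r]\le[t]$ or $b^{-1}[r]\le[t]$ then $[r]\le a[t]$ or $[r]\le b[t]$, forcing $\chi(e_r)=1$ by downward closedness and contradicting $[r]\in[F_s]$. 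Hence $[s]\in T^{a,b}_{[F]}$, using the $\vee$-description of that set given before the lemma.

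The main obstacle is the reverse inclusion $\bigcap_{[F]}\bigcup_{[s]\in T^{a,b}_{[F]}}Z_{s,F_s}\subset\Omega^{a,b}_\fix$ modulo null sets, because membership in $T^{a,b}_{[F]}$ only asserts that a witness exists in $S/{\sim_N}$ at each finite resolution, not that $\chi$ realizes it. I would argue by contradiction: suppose $\chi$ lies in every $\Omega'_{[F]}:=\bigcup_{[s]\in T^{a,b}_{[F]}}Z_{s,F_s}$ yet $\chi(e_{aw_0})\neq\chi(e_{bw_0})$ for some $w_0$, say $\chi(e_{aw_0})=1$, $\chi(e_{bw_0})=0$. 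Choosing $[F]$ $\vee$-closed and containing $[a]\vee[b]$, $[aw_0]$ and $[bw_0]$, and writing $[s]$ for the atom of $\chi$ and $[t]=a^{-1}[s]\vee b^{-1}[s]$ for its witness, the chain $[aw_0]\le[s]\le a[t]$ yields $[w_0]\le[t]$, hence $[bw_0]\le b[t]$ and $[s]\le b[t]$. Thus $[bw_0]\vee[s]$ is finite, lies in $[F_s]$ (it is $>[s]$, else $\chi(e_{bw_0})=1$), and satisfies $b^{-1}([bw_0]\vee[s])=[w_0]\vee b^{-1}[s]\le[t]$, contradicting $[s]\in T^{a,b}_{[F]}$. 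The symmetric case is identical with $a\leftrightarrow b$. Therefore $\chi(e_{aw})=\chi(e_{bw})$ for all $w$, so $\chi\in\Omega^{a,b}_\fix$ by the first step; the only discrepancies are the $\sim_N$-null sets, which is exactly what the ``modulo measure zero'' clause absorbs.

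Finally I would extract the measure. The atoms $Z_{s,F_s}$ are pairwise disjoint, so $\mu_N(\Omega'_{[F]})=\sum_{[s]\in T^{a,b}_{[F]}}\mu_N(Z_{s,F_s})$, and each term is given by \eqref{eq:muN} with $\beta=1$, producing precisely the summand in \eqref{eq:Fab}. A short monotonicity check — if $[F]\subset[F']$ then $\Omega'_{[F']}\subset\Omega'_{[F]}$ modulo null sets, proved by the same witness bookkeeping (a witness for the finer atom dominates one for the coarser atom, and any violation over $[F_s]$ pulls back to a violation over $[F'_{s'}]$) — shows that $\{\Omega'_{[F]}\}$ is a decreasing net with intersection $\Omega^{a,b}_\fix$ modulo null sets. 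Continuity of $\mu_N$ from above then gives $\mu_N(\Omega^{a,b}_\fix)=\lim_{[F]}\mu_N(\Omega'_{[F]})$, which is exactly \eqref{eq:Fab}.
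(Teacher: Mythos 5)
Your first and last steps are sound, and they take a genuinely different route from the paper. The pointwise characterization $\Omega^{a,b}_\fix=\{\chi\in Z_a\cap Z_b\mid \chi(e_{aw})=\chi(e_{bw})\ \forall w\in S\}$ is correct, your forward inclusion (producing the minimal witness $[t]=a^{-1}[s]\vee b^{-1}[s]$ and excluding all $[r]\in[F_s]$, modulo the countably many null discrepancies $Z_u\triangle Z_{u'}$ for $u\sim_N u'$) is valid, and the measure extraction via disjointness of atoms, \eqref{eq:muN} and continuity from above along a cofinal sequence works, given your (correct, if tersely justified) monotonicity claim. By contrast, the paper first proves the \emph{exact} identity $\Omega^{a,b}_\fix=\bigcap_F\bigcup_{s\in\tilde T^{a,b}_F}Z_{s,\tilde F_s}$ at the level of honest representatives $F\subset cS$, using that these atoms form a basis of neighbourhoods in $Z_c$ so that continuity forces $\lambda_a^{-1}\cdot\chi=\lambda_b^{-1}\cdot\chi$, and only afterwards compares $\tilde T^{a,b}_F$ with $T^{a,b}_{[F]}$, pushing all $\sim_N$-ambiguities into an error set $C_F$ that is null on $\Omega^{a,b}_\fix$.

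There is, however, a genuine gap in your reverse inclusion: you choose ``$[F]$ $\vee$-closed and containing $[a]\vee[b]$, $[aw_0]$ and $[bw_0]$'', but the intersection in the lemma runs only over subsets of $\{[u]\in S/{\sim_N}\mid [a]\vee[b]\le[u]\}$, and neither $[aw_0]$ nor $[bw_0]$ need dominate $[a]\vee[b]$ (take $S=\Z_+^2$ with $N(m,n)=2^m3^n$, $a=(1,0)$, $b=(0,1)$, $w_0=e$: then $[aw_0]=[a]\not\ge[a]\vee[b]$). For such an $[F]$ the set $T^{a,b}_{[F]}$ is not even defined, since $a^{-1}[bw_0]$ makes no sense, and more importantly the hypothesis $\chi\in\bigcup_{[s]\in T^{a,b}_{[F]}}Z_{s,F_s}$ is simply not available. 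The gap is repairable, but by a two-stage argument rather than your one-shot choice: since $\chi\in Z_{aw_0}\cap Z_c$ modulo a null set, the class $[d]:=[aw_0]\vee[a]\vee[b]$ is finite and admissible; including it in $[F]$ forces $[d]\le[s]$ for the atom $[s]$ of $\chi$, and your chain then yields $[w_0]\le[t]$ and hence that $[x]:=[bw_0]\vee[s]\le b[t]$ is finite, strictly larger than $[s]$ (as $\chi\notin Z_{bw_0}$) and admissible (as $[x]\ge[s]\ge[a]\vee[b]$). But $[x]\notin[F]$ in general, so you must pass to the $\vee$-closure $[F']$ of $[F]\cup\{[x]\}$, observe that the atom of $\chi$ in $[F']$ is still $[s]$ (every new class dominates $[x]\ge[bw_0]$, and $\chi\notin Z_{bw_0}$), and rerun the chain for an arbitrary witness $[t']$ of $[s]$ in $[F']$ to get $[x]\in[F'_s]$ with $[x]\le b[t']$, the desired contradiction. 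The point is that the finiteness of $[bw_0]\vee[s]$, which is what makes an admissible substitute for $[bw_0]$ available, is itself an output of the first stage, which is why it cannot be built into $[F]$ in advance as your write-up does.
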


Here by a $\vee$-closed set we mean a subset $[F]\subset S/{\sim_N}$ such that $[s]\vee[t]\in[F]$ whenever $[s],[t]\in [F]$ and $[s]\vee[t]<\infty$.

\bp
Let $c$ be such that $aS\cap bS=cS$. First of all observe that the set of points fixed by $\lambda_a^{\phantom{1}}\lambda_b^{-1}$ is contained in the compact open set $\lambda_a\cdot Z_e\cap\lambda_b\cdot Z_e=Z_c$.

Consider now a finite subset $F$ of $cS$ containing $c$ such that the sets $sS$, $s\in F$, are pairwise distinct and the collection of these sets is closed under intersections. For every $s\in F$ denote by $\tilde F_s\subset F$ the set of elements $t\in F$ such that $tS\subsetneq sS$. Then the atoms of the algebra of subsets of~$Z_c$ generated by $Z_s$, $s\in F$, are exactly the sets $Z_{s,\tilde F_s}$. Denote by $\tilde T^{a,b}_F$ the set of $s\in F$ such that $\lambda_a^{-1}\cdot Z_{s,\tilde F_s}\cap \lambda_b^{-1}\cdot Z_{s,\tilde F_s}\ne\emptyset$. It follows that $\Omega^{a,b}_\fix\subset\cup_{s\in \tilde T^{a,b}_F}Z_{s,\tilde F_s}$, hence $\Omega^{a,b}_\fix\subset\cap_F\cup_{s\in \tilde T^{a,b}_F}Z_{s,\tilde F_s}$,
where the intersection is over all finite sets $F$ as above.

Assume now that $\chi$ is an element of $\cap_F\cup_{s\in \tilde T^{a,b}_F}Z_{s,\tilde F_s}$. Then, since the sets $Z_{s,\tilde F_s}$ form a basis of topology on $Z_c$, we conclude that $\lambda_a^{-1}\cdot U\cap \lambda_b^{-1}\cdot U\ne\emptyset$ for every neighbourhood $U$ of $\chi$. Hence $\lambda_a^{-1}\cdot \chi=\lambda_b^{-1}\cdot \chi$, so that $\chi$ is fixed by $\lambda_a^{\phantom{1}}\lambda_b^{-1}$. Therefore
$$
\Omega^{a,b}_\fix=\bigcap_F\bigcup_{s\in \tilde T^{a,b}_F}Z_{s,\tilde F_s}.
$$

In order to prove the lemma it now suffices to show that for every finite set $F\subset cS$ as above we have, modulo a set of measure zero,
\begin{equation}\label{eq:twoT}
\bigcup_{s\in \tilde T^{a,b}_F}Z_{s,\tilde F_s}=C_F\cup\bigcup_{[s]\in T^{a,b}_{[F]}}Z_{s,F_s}
\end{equation}
for some Borel set $C_F$ such that $\mu_N(\Omega^{a,b}_\fix\cap C_F)=0$. Indeed, this would prove the first statement of the lemma, and then an application of~\eqref{eq:muN} would give~\eqref{eq:Fab}.

To establish~\eqref{eq:twoT} it is enough to show the following two properties:
\begin{enumerate}
\item[(i)] for every $[s]\in T^{a,b}_{[F]}$, there is $s'\in \tilde T^{a,b}_F$ such that $[s']=[s]$ and $[\tilde F_{s'}]=[F_s]$;
\item[(ii)] for every $s\in \tilde T^{a,b}_{F}$, we have either $[s]\in T^{a,b}_{[F]}$ and $[\tilde F_s]=[F_s]$, or $\mu_N(\Omega^{a,b}_\fix\cap Z_{s,\tilde F_s})=0$.
\end{enumerate}

We start with (i). Take $[s]\in T^{a,b}_{[F]}$. Since by assumption the collection of the sets $s'S$, with $s'\in F$ and $s'\sim_Ns$, is closed under intersections, it has a smallest element $s'S$. Then, for every $r\in\tilde F_{s'}$, we have $r\not\sim_Ns'$, hence $[r]\in [F_s]$. On the other hand, if $[r]\in[F_s]$, then $s'S\cap rS=r'S$ for some $r'\in F$, $r'\sim_N r$. Hence $r'\in\tilde F_{s'}$ and $[r']=[r]$. Therefore $[\tilde F_{s'}]=[F_s]$.

By assumption, we have $a^{-1}[s]\vee b^{-1}[s]<\infty$ and $a^{-1}[r]\not\le a^{-1}[s]\vee b^{-1}[s]$, $ b^{-1}[r]\not\le a^{-1}[s]\vee b^{-1}[s]$ for all $[r]\in [F_s]$. It follows that there exists $t\in S$ such that $a^{-1}s'S\cap b^{-1}s'S=tS$, and then $t\not\in a^{-1}rS$ and $t\not\in b^{-1}rS$ for all $r\in \tilde F_{s'}$. This shows that
$$
\lambda_a^{-1}\cdot Z_{s,\tilde F_s}\cap \lambda_b^{-1}\cdot Z_{s,\tilde F_s}=Z_{t,a^{-1}\tilde F_s\cup b^{-1}\tilde F_s}\ne\emptyset,
$$
that is, $s'\in \tilde T^{a,b}_F$.

Turning to (ii), take $s\in \tilde T^{a,b}_{F}$. If there is $s'\in\tilde F_s$ equivalent to $s$, then $Z_{s,\tilde F_s}\subset Z_s\setminus Z_{s'}$, hence $Z_{s,\tilde F_s}$ has measure zero. Assume now that there is no such $s'$, or in other words, $sS$ is the smallest set among the sets $s'S$ with $s'\in F$ and $s'\sim_Ns$. As we have already seen in the proof of (i) this implies that $[\tilde F_s]=[F_s]$.

The assumption $\lambda_a^{-1}\cdot Z_{s,\tilde F_s}\cap \lambda_b^{-1}\cdot Z_{s,\tilde F_s}\ne\emptyset$ is equivalent to the existence of $t\in S$ such that $a^{-1}s\cap b^{-1}s=tS$, $t\not\in a^{-1}rS$ and $t\not\in b^{-1}rS$ for all $r\in \tilde F_{s}$. If $a^{-1}[r]\not\le [t]$ and $b^{-1}[r]\not\le [t]$ for all $r\in \tilde F_s$, then we get $[s]\in T^{a,b}_{[F]}$. Assume next that one of these conditions is violated, say, $a^{-1}[r]\le [t]$ for some $r\in \tilde F_s$. Then $\mu_N(Z_{at}\setminus Z_r)=0$, and since
$$
\Omega^{a,b}_\fix\cap Z_{s,\tilde F_s}\subset \lambda_a\cdot(\lambda_a^{-1}\cdot Z_{s,\tilde F_s}\cap \lambda_b^{-1}\cdot Z_{s,\tilde F_s})=
\lambda_a\cdot Z_{t,a^{-1}\tilde F_s\cup b^{-1}\tilde F_s}\subset Z_{at,\tilde F_s}\subset Z_{at}\setminus Z_r,
$$
we see that $\mu_N(\Omega^{a,b}_\fix\cap Z_{s,\tilde F_s})=0$. The case $b^{-1}[r]\le [t]$ for some $r\in \tilde F_s$ is similar.
\ep

Combining these results we get the following criterion.

\begin{thm}\label{thm:unique}
Assume $S$ is a countable right LCM semigroup and $N$ is a scale on $S$ such that there is a probability measure $\mu_N$ on $\widehat{E(S)}$ satisfying $\mu_N(Z_s)=N(s)^{-1}$, $s\in S$, or equivalently, such that there is a $\sigma^N$-KMS$_1$-state on $C^*(S)$.
Then \eqref{eq:2states 1} and \eqref{eq:2states 2} define $\sigma^N$-KMS$_1$-states $\varphi'$ and~$\varphi''$, whose values at $v_a^{\phantom{*}}v_b^*$ are given by the limits~\eqref{eq:Aab} and~\eqref{eq:Fab}, respectively, if  $aS\cap bS\ne\emptyset$ and $N(a)=N(b)$, and $\varphi'(v_a^{\phantom{*}}v_b^*)=\varphi''(v_a^{\phantom{*}}v_b^*)=0$ otherwise. Furthermore, for any $\sigma^N$-KMS$_1$-state~$\varphi$ we then have
$$
|\varphi(v_a^{\phantom{*}}v_b^*)-\varphi'(v_a^{\phantom{*}}v_b^*)|\le \varphi''(v_a^{\phantom{*}}v_b^*)-\varphi'(v_a^{\phantom{*}}v_b^*)\ \ \text{for all}\ \ a,b\in S.
$$

In particular, there is exactly one $\sigma^N$-KMS$_1$-state on $C^*(S)$ if and only if the numbers~\eqref{eq:Aab} and~\eqref{eq:Fab} are equal for all $a,b\in S$ such that $a\ne b$, $aS\cap bS\ne\emptyset$ and $N(a)=N(b)$.
\end{thm}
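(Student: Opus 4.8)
The plan is to assemble the theorem from the two measure computations in Lemmas~\ref{lem:Aab} and~\ref{lem:Fab} together with the general KMS description in Theorem~\ref{thm:KMS}. First I would note that since a $\sigma^N$-KMS$_1$-state is assumed to exist, Proposition~\ref{prop:muN} guarantees that $\mu_N$ exists and is the \emph{unique} quasi-invariant probability measure on $\widehat{E(S)}$ with Radon--Nikodym cocycle $e^{-c_N}$. Consequently every KMS$_1$-state arises, via Theorem~\ref{thm:KMS}, from a pair $(\mu_N,\{\tau_\chi\})$ for some measurable field of tracial states, and in particular the canonical-trace and trivial-character fields yield the two KMS$_1$-states $\varphi'=\varphi'_{\mu_N}$ and $\varphi''=\varphi''_{\mu_N}$ of~\eqref{eq:2states 1}--\eqref{eq:2states 2}. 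The identities~\eqref{eq:phis} then give $\varphi'(v_a^{\phantom{*}}v_b^*)=\mu_N(\Omega^{a,b}_\triv)$ and $\varphi''(v_a^{\phantom{*}}v_b^*)=\mu_N(\Omega^{a,b}_\fix)$, and I would read off the explicit formulas~\eqref{eq:Aab} and~\eqref{eq:Fab} directly from Lemmas~\ref{lem:Aab} and~\ref{lem:Fab} in the case $aS\cap bS\neq\emptyset$.

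For the degenerate cases I would argue as follows. If $aS\cap bS=\emptyset$ then $\Omega^{a,b}_\fix\subset Z_a\cap Z_b=\emptyset$, so both quantities vanish. If $N(a)\neq N(b)$, I would use the scaling relation~\eqref{eq:scaling}: for any Borel $B\subset Z_b$ one computes $\mu_N(\lambda_a^{\phantom{1}}\lambda_b^{-1}\cdot B)=\tfrac{N(b)}{N(a)}\mu_N(B)$. Applying this to $B=\Omega^{a,b}_\fix$, which $\lambda_a^{\phantom{1}}\lambda_b^{-1}$ fixes pointwise, forces $\mu_N(\Omega^{a,b}_\fix)=\tfrac{N(b)}{N(a)}\mu_N(\Omega^{a,b}_\fix)$ and hence $\mu_N(\Omega^{a,b}_\fix)=0$; since $\Omega^{a,b}_\triv\subset\Omega^{a,b}_\fix$ the same holds for $\varphi'$. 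This settles the first assertion.

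Next I would establish the inequality. Fix a KMS$_1$-state $\varphi$, coming from $(\mu_N,\{\tau_\chi\})$. Writing $v_a^{\phantom{*}}v_b^*$ as the characteristic function of the bisection $D(\lambda_a^{\phantom{1}}\lambda_b^{-1},Z_b)$ and using the integral formula in Theorem~\ref{thm:KMS}, exactly one isotropy element $[\lambda_a^{\phantom{1}}\lambda_b^{-1},\chi]$ contributes over each $\chi\in\Omega^{a,b}_\fix$, so
\[
\varphi(v_a^{\phantom{*}}v_b^*)=\int_{\Omega^{a,b}_\fix}\tau_\chi\big(u_{[\lambda_a^{\phantom{1}}\lambda_b^{-1},\chi]}\big)\,d\mu_N(\chi).
\]
On $\Omega^{a,b}_\triv$ the element $[\lambda_a^{\phantom{1}}\lambda_b^{-1},\chi]$ is the unit, whence $\tau_\chi(u)=1$ and this part of the integral equals $\mu_N(\Omega^{a,b}_\triv)=\varphi'(v_a^{\phantom{*}}v_b^*)$. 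On the complement the integrand is the value of a tracial state on a unitary, so it is bounded by $1$ in absolute value. Subtracting $\varphi'$ and taking absolute values therefore gives
\[
|\varphi(v_a^{\phantom{*}}v_b^*)-\varphi'(v_a^{\phantom{*}}v_b^*)|\le\mu_N(\Omega^{a,b}_\fix\setminus\Omega^{a,b}_\triv)=\varphi''(v_a^{\phantom{*}}v_b^*)-\varphi'(v_a^{\phantom{*}}v_b^*).
\]

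Finally, for the uniqueness criterion I would combine the inequality with density. If the numbers~\eqref{eq:Aab} and~\eqref{eq:Fab} agree for all admissible $a,b$, then $\varphi'$ and $\varphi''$ agree at every $v_a^{\phantom{*}}v_b^*$ (the remaining cases being covered by the vanishing above and by $\Omega^{a,a}_\triv=\Omega^{a,a}_\fix=Z_a$), so the inequality collapses to $\varphi(v_a^{\phantom{*}}v_b^*)=\varphi'(v_a^{\phantom{*}}v_b^*)$ for every KMS$_1$-state $\varphi$; since the $v_a^{\phantom{*}}v_b^*$ span a dense subspace, $\varphi=\varphi'$ is unique. Conversely, uniqueness forces $\varphi'=\varphi''$ (both being KMS$_1$-states), which gives equality of~\eqref{eq:Aab} and~\eqref{eq:Fab}. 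I expect the genuinely hard work to sit in the preceding Lemmas~\ref{lem:Aab} and~\ref{lem:Fab}; within the theorem itself the only delicate point is the clean reduction of $\varphi(v_a^{\phantom{*}}v_b^*)$ to a single integral over the isotropy-fixed set $\Omega^{a,b}_\fix$ and the bookkeeping that the generating function takes the value $1$ on precisely one isotropy element per fibre.
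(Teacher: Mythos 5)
Your proposal is correct and follows essentially the same route as the paper's proof: uniqueness of $\mu_N$ from Proposition~\ref{prop:muN} plus Theorem~\ref{thm:KMS} to represent every KMS$_1$-state as $\int_{\Omega^{a,b}_\fix}\tau_\chi(u_{[\lambda_a^{\phantom{1}}\lambda_b^{-1},\chi]})\,d\mu_N$, Lemmas~\ref{lem:Aab} and~\ref{lem:Fab} for the explicit values, the scaling relation~\eqref{eq:scaling} for the case $N(a)\ne N(b)$, and the $|\tau_\chi(u_g)|\le 1$ bound plus density of $\operatorname{span}\{v_a^{\phantom{*}}v_b^*\}$ for the inequality and the uniqueness criterion. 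Your extra observations (the single contributing isotropy element per fibre, and $\Omega^{a,b}_\fix\subset Z_a\cap Z_b$) are exactly the implicit bookkeeping in the paper's argument.
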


\bp
As we already observed, the states $\varphi'$ and $\varphi''$ are given by~\eqref{eq:phis}.
Observe also that if $aS\cap bS=\emptyset$, then $\Omega^{a,b}_\fix=\emptyset$, and if $N(a)\ne N(b)$, then $\mu_N(\Omega^{a,b}_\fix)=0$, as~$\lambda_a^{\phantom{1}}\lambda_b^{-1}$ rescales the measure~$\mu_N$ by $N(a)N(b)^{-1}$. Together with Lemmas~\ref{lem:Aab} and~\ref{lem:Fab} this shows that $\varphi'$ and $\varphi''$ have the asserted values at $v_a^{\phantom{*}}v_b^*$.

By Theorem~\ref{thm:KMS} and the explicit form~\eqref{eq:groupoid-iso} of the isomorphism $C^*(S)\cong C^*(\CG(S))$, for any other KMS$_1$-state $\varphi$ we have
$$
\varphi(v_a^{\phantom{*}}v_b^*)=\int_{\Omega^{a,b}_\fix}\tau_\chi(u_{[\lambda_a^{\phantom{1}}\lambda_b^{-1},\chi]})d\mu_N(\chi)
$$
for a measurable field of tracial states $\tau_\chi$ on $C^*(\CG(S)^\chi_\chi)$. It follows that, for all $a,b\in S$,
\begin{align*}
|\varphi(v_a^{\phantom{*}}v_b^*)-\varphi'(v_a^{\phantom{*}}v_b^*)|&=\left|\int_{\Omega^{a,b}_\fix\setminus\Omega^{a,b}_\triv}
\tau_\chi(u_{[\lambda_a^{\phantom{1}}\lambda_b^{-1},\chi]})d\mu_N(\chi)\right|\\
&\le\mu_N(\Omega^{a,b}_\fix\setminus\Omega^{a,b}_\triv)=\varphi''(v_a^{\phantom{*}}v_b^*)-\varphi'(v_a^{\phantom{*}}v_b^*).
\end{align*}

Finally, by the above inequality or by Corollary~\ref{cor:uniqueKMS}, the $\sigma^N$-KMS$_1$-state is unique if and only if $\varphi'(v_a^{\phantom{*}}v_b^*)=\varphi''(v_a^{\phantom{*}}v_b^*)$ for all $a,b\in S$. This condition is of interest only for $a$ and $b$ such that $a\ne b$, $aS\cap bS\ne\emptyset$ and $N(a)=N(b)$.
\ep

Under rather mild additional conditions the uniqueness criterion can be simplified considerably.

\smallskip

First of all we have the following generalization of~\cite{BLRSi}*{Proposition~2.2}, showing that often it suffices to consider a much smaller set of pairs $(a,b)$.

\begin{lemma}\label{lem:ker-reduction}
Under the assumptions of Theorem~\ref{thm:unique}, assume in addition that $1$ is an isolated point of $N(S)\subset[1,+\infty)$. Then any $\sigma^N$-KMS$_1$-state on $C^*(S)$ is completely determined by its restriction to~$C^*(\ker N)$.

In particular, if the numbers \eqref{eq:Aab} and \eqref{eq:Fab} are different for some $a,b\in S$, with $a\ne b$, $aS\cap Sb\ne\emptyset$ and $N(a)=N(b)$, then the same is true already for some $a,b\in\ker N$.
\end{lemma}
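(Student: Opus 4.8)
The plan is to prove the first, main assertion; the ``in particular'' part will then follow formally. Since the elements $v_s^{\phantom{*}}v_t^*$ span a dense subspace of $C^*(S)$, it suffices to show that the value $\varphi(v_s^{\phantom{*}}v_t^*)$ of an arbitrary $\sigma^N$-KMS$_1$-state $\varphi$ is already determined by the restriction $\varphi|_{C^*(\ker N)}$. First I would record two reductions valid for \emph{every} KMS$_1$-state. The standard eigenvalue argument for the KMS condition, applied to $\sigma^N_i(v_t^*)=N(t)v_t^*$, gives $\varphi(v_s^{\phantom{*}}v_t^*)=0$ whenever $N(s)\neq N(t)$; and if $sS\cap tS=\emptyset$ then $\Omega^{s,t}_\fix=\emptyset$, so again $\varphi(v_s^{\phantom{*}}v_t^*)=0$ by Theorem~\ref{thm:KMS}. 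Thus I may assume $N(s)=N(t)$ and $sS\cap tS=cS$, with $c=sa=tb$ for suitable $a,b\in S$, and note that then $N(a)=N(b)$.

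The heart of the argument is a single \emph{reduction step}. Since the fixed-point set $\Omega^{s,t}_\fix$ is contained in $Z_s\cap Z_t=Z_c$ (as already used in the proof of Lemma~\ref{lem:Fab}), inserting the projection $v_c^{\phantom{*}}v_c^*=\un_{Z_c}$ changes nothing on the support of the integral defining $\varphi(v_s^{\phantom{*}}v_t^*)$, so $\varphi(v_s^{\phantom{*}}v_t^*)=\varphi(v_s^{\phantom{*}}v_t^*v_c^{\phantom{*}}v_c^*)$. Applying the KMS condition with $\sigma^N_i(v_c^*)=N(c)v_c^*$ and using the relations $v_c^*v_s^{\phantom{*}}=v_a^*$ and $v_t^*v_c^{\phantom{*}}=v_b^{\phantom{*}}$ (coming from $c=sa=tb$) yields $\varphi(v_s^{\phantom{*}}v_t^*)=N(c)^{-1}\varphi(v_a^*v_b^{\phantom{*}})$. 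Rewriting $v_a^*v_b^{\phantom{*}}=v_\alpha^{\phantom{*}}v_\gamma^*$ via the defining relations of $C^*(S)$ (with $a\alpha=b\gamma$ and $aS\cap bS=a\alpha S$, whence $N(\alpha)=N(\gamma)$) puts the outcome back into the original form, giving the identity $\varphi(v_s^{\phantom{*}}v_t^*)=N(c)^{-1}\varphi(v_\alpha^{\phantom{*}}v_\gamma^*)$.

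Now the hypothesis that $1$ is isolated in $N(S)$ enters. One checks that $[s]=[t]$ in $S/{\sim_N}$ if and only if $a\in\ker N$. In that case $a,b\in\ker N$, so $v_a^*v_b^{\phantom{*}}\in C^*(\ker N)$ and $\varphi(v_s^{\phantom{*}}v_t^*)=N(c)^{-1}\varphi(v_a^*v_b^{\phantom{*}})$ is an explicit multiple of a value of $\varphi|_{C^*(\ker N)}$; here the reduction \emph{terminates}. Otherwise $a\notin\ker N$, hence $N(a)\ge\delta$, where $\delta>1$ is chosen with $N(S)\cap(1,\delta)=\emptyset$, and the contraction factor obeys $N(c)^{-1}=N(s)^{-1}N(a)^{-1}\le\delta^{-1}<1$. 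Iterating the reduction step on $\varphi(v_\alpha^{\phantom{*}}v_\gamma^*)$, I obtain after $k$ non-terminating steps an identity $\varphi(v_s^{\phantom{*}}v_t^*)=C_k\,\varphi(v_{s_k}^{\phantom{*}}v_{t_k}^*)$ with a universal constant $0<C_k\le\delta^{-k}$. Consequently, either the process terminates, landing in $C^*(\ker N)$ or hitting a vanishing term when some intersection is empty, or it never terminates, in which case $|\varphi(v_s^{\phantom{*}}v_t^*)|\le\delta^{-k}\to0$ forces $\varphi(v_s^{\phantom{*}}v_t^*)=0$. In all cases the value is determined by $\varphi|_{C^*(\ker N)}$, proving the first assertion.

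For the ``in particular'' statement I would simply apply what was just proved to the two KMS$_1$-states $\varphi'$ and $\varphi''$ of Theorem~\ref{thm:unique}. If \eqref{eq:Aab} and \eqref{eq:Fab} differ for some admissible pair $a,b$, then $\varphi'\neq\varphi''$; since each is determined by its restriction to $C^*(\ker N)$, these restrictions already differ, that is, $\varphi'(v_a^{\phantom{*}}v_b^*)\neq\varphi''(v_a^{\phantom{*}}v_b^*)$ for some $a,b\in\ker N$, and for such a pair automatically $a\neq b$, $N(a)=N(b)=1$, and $aS\cap bS\neq\emptyset$ because $\ker N\subset S_c$. The main obstacle is the bookkeeping in the reduction step: verifying the support inclusion $\Omega^{s,t}_\fix\subset Z_c$ that licenses inserting $v_c^{\phantom{*}}v_c^*$, and, above all, extracting from the isolation of $1$ the \emph{uniform} contraction factor $\delta^{-1}<1$ that guarantees the iteration either terminates inside $C^*(\ker N)$ or decays to $0$.
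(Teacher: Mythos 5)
Your proposal is correct and follows essentially the same route as the paper: the same iterated KMS reduction $\varphi(v_s^{\phantom{*}}v_t^*)=N(c)^{-1}\varphi(v_\alpha^{\phantom{*}}v_\gamma^*)$ along least common multiples, terminating in $C^*(\ker N)$ or forcing the value to $0$ via the uniform factor $\delta^{-1}<1$ supplied by the isolation of $1$ in $N(S)$. The only (immaterial) difference is that you justify inserting $v_c^{\phantom{*}}v_c^*$ by the support of the groupoid integral, whereas the paper gets the same identity purely algebraically from $v_t^*v_s^{\phantom{*}}=v_t^*v_c^{\phantom{*}}v_c^*v_s^{\phantom{*}}$ together with the KMS condition, avoiding any appeal to Theorem~\ref{thm:KMS}.
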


\bp
The proof is essentially identical to that of~\cite{BLRSi}*{Proposition~2.2}.
Assume $\varphi$ is a $\sigma^N$-KMS$_1$-state on $C^*(S)$. Take $a,b\in S$. We want to show how to compute $\varphi(v_a^{\phantom{*}}v_b^*)$ only knowing~$\varphi|_{C^*(\ker N)}$.

If $N(a)\ne N(b)$, then $\varphi(v_a^{\phantom{*}}v_b^*)=0$ by $\sigma^N$-invariance of $\varphi$. If $N(a)=N(b)=1$, then there is nothing to show. Therefore we may assume that $N(a)=N(b)>1$.

If $aS\cap bS=\emptyset$, then $v_b^*v_a^{\phantom{*}}=0$, hence
$$
\varphi(v_a^{\phantom{*}}v_b^*)=N(a)^{-1}\varphi(v_b^*v_a^{\phantom{*}})=0.
$$

Assume now that $aS\cap bS\ne\emptyset$. Let $c_1\in S$ be such that $aS\cap bS=c_1S$, and put $b_1:=a^{-1}c_1$, $a_1:=b^{-1}c_1$. Then
$$
\varphi(v_a^{\phantom{*}}v_b^*)=N(a)^{-1}\varphi(v_b^*v_a^{\phantom{*}})=N(a)^{-1}\varphi(v_b^*v_{c_1}^{\phantom{*}}v_{c_1}^*v_a^{\phantom{*}})
=N(a)^{-1}\varphi(v_{a_1}^{\phantom{*}}v_{b_1}^*).
$$

If $a_1,b_1\in\ker N$, then we are done. Otherwise we apply the same argument to the pair $(a_1,b_1)$ instead of~$(a,b)$, and so~on. This process either stops at some point, which means that we succeeded in computing $\varphi(v_a^{\phantom{*}}v_b^*)$ from $\varphi|_{C^*(\ker N)}$, or it continues indefinitely. In the latter case, after $n$ steps, we get elements $a_1,\dots,a_n,b_1,\dots,b_n\in S$ such that $N(a_k)=N(b_k)>1$ and
$$
\varphi(v_a^{\phantom{*}}v_b^*)=N(a)^{-1}N(a_1)^{-1}\dots N(a_{n-1})^{-1}\varphi(v_{a_n}^{\phantom{*}}v_{b_n}^*).
$$
But by our assumption on $N$ we have $N(s)\ge 1+\delta$ for some $\delta>0$ and all $s\in S\setminus\ker N$. Hence
$$
|\varphi(v_a^{\phantom{*}}v_b^*)|\le (1+\delta)^{-n}.
$$
Since this inequality holds for all $n$, we conclude that $\varphi(v_a^{\phantom{*}}v_b^*)=0$.
\ep

Second, the sets $T^{a,b}_{[F]}$ can sometimes be given a better description.

\begin{lemma}\label{lem:Fab-fix}
Assume $a,b\in\ker N$ and $[F]\subset S/{\sim_N}$ is a finite $\vee$-closed set containing $[e]$ and invariant under the actions of $a$ and $b$. Then
$$
T^{a,b}_{[F]}=\{[s]\in [F]: a^{-1}[s]=b^{-1}[s]\}.
$$
\end{lemma}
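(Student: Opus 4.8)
The plan is to reduce everything to one structural fact: because $a,b\in\ker N$, the maps $[t]\mapsto a[t]$ and $[t]\mapsto b[t]$ act on $S/{\sim_N}$ not merely by injective order-preserving maps (Lemma~\ref{lem:action}) but by \emph{bijections} (also Lemma~\ref{lem:action}) which are in fact order isomorphisms commuting with $\vee$. I would prove this first. Left cancellation gives $axS\cap ayS=a(xS\cap yS)$ for all $x,y\in S$ (from $axu=ayv$ one gets $xu=yv$), so by Lemma~\ref{lem:quasi-lattice} the map $[t]\mapsto a[t]$ sends $[x]\vee[y]$ to $a[x]\vee a[y]$ and preserves the relation $[x]\vee[y]=\infty$. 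Since in the quasi-lattice $S/{\sim_N}$ one has $[x]\le[y]\iff[x]\vee[y]=[y]$, an order-preserving bijection that commutes with $\vee$ automatically reflects the order, so its inverse is order-preserving too. Hence $a^{-1}$ and $b^{-1}$ are well-defined order isomorphisms, and $[r]\le[s]\iff a^{-1}[r]\le a^{-1}[s]$, and likewise for $b$. This bookkeeping is the enabling step; the genuinely substantive part is the inclusion $T^{a,b}_{[F]}\subseteq\{[s]: a^{-1}[s]=b^{-1}[s]\}$, and the crux there is the choice of test elements below.

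For the easy inclusion $\{[s]\in[F]:a^{-1}[s]=b^{-1}[s]\}\subseteq T^{a,b}_{[F]}$, suppose $a^{-1}[s]=b^{-1}[s]=:[p]$. Then $a^{-1}[s]\vee b^{-1}[s]=[p]<\infty$, so the first requirement in the definition of $T^{a,b}_{[F]}$ holds. For the second, take $[r]\in[F_s]$, so $[r]>[s]$. Since $a^{-1}$ is an order isomorphism, $a^{-1}[r]\le[p]=a^{-1}[s]$ would force $[r]\le[s]$, contradicting $[r]>[s]$; hence $a^{-1}[r]\not\le[p]$, and symmetrically $b^{-1}[r]\not\le[p]$. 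Thus $[s]\in T^{a,b}_{[F]}$.

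For the reverse inclusion I would exploit invariance of $[F]$. Take $[s]\in T^{a,b}_{[F]}$ and set $[p]:=a^{-1}[s]\vee b^{-1}[s]$, finite by definition. Since $[F]$ is invariant under $a$ and $b$ and is finite, it is also invariant under $a^{-1}$ and $b^{-1}$ (a map of a finite set into itself that is injective is onto), so $a^{-1}[s],b^{-1}[s]\in[F]$; by $\vee$-closedness $[p]\in[F]$, and then $a[p],b[p]\in[F]$ as well. Applying the order-preserving map $a$ to $a^{-1}[s]\le[p]$ gives $[s]\le a[p]$. If $a[p]\ne[s]$, then $a[p]\in[F_s]$, and taking $[r]=a[p]$ in the defining condition of $T^{a,b}_{[F]}$ yields $a^{-1}[r]=[p]\le[p]$, a contradiction; hence $a[p]=[s]$, i.e.\ $[p]=a^{-1}[s]$. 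The identical argument with $b$ gives $[p]=b^{-1}[s]$, whence $a^{-1}[s]=b^{-1}[s]$, closing the proof. The main obstacle is precisely spotting that feeding $[r]=a[p]$ (and $[r]=b[p]$) into the strict-upper-bound clause is exactly what collapses $a^{-1}[s]\vee b^{-1}[s]$ onto the common value.
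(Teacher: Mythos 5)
Your proof is correct and takes essentially the same route as the paper: your test element $[r]=a[p]=a\bigl(a^{-1}[s]\vee b^{-1}[s]\bigr)=[s]\vee ab^{-1}[s]$ is exactly the element the paper feeds into the strict-upper-bound clause, and the easy inclusion is handled identically via order reflection. The only difference is presentational -- you spell out the preliminary facts (the action of $\ker N$ commutes with $\vee$, hence is an order isomorphism, and finiteness of $[F]$ gives invariance under $a^{-1},b^{-1}$) that the paper uses tacitly, e.g.\ when writing $a[t]=[s]\vee ab^{-1}[s]$ without comment.
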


\bp
Assume $[s]\in T^{a,b}_{[F]}$. Then $[t]:=a^{-1}[s]\vee b^{-1}[s]$ is a well-defined element of $[F]$. By definition we have $[r]\not\le a[t]=[s]\vee ab^{-1}[s]$ for all $[r]\in [F_s]$. But this implies that $ab^{-1}[s]\le [s]$, as otherwise we could take $[r]=[s]\vee ab^{-1}[s]$ and get a contradiction. Thus $b^{-1}[s]\le a^{-1}[s]$. By symmetry we also have the opposite inequality, hence $a^{-1}[s]=b^{-1}[s]$.

Conversely, assume $a^{-1}[s]=b^{-1}[s]$. Then it is obvious that $a^{-1}[r]\not\le a^{-1}[s]\vee b^{-1}[s]=a^{-1}[s]$ and  $b^{-1}[r]\not\le a^{-1}[s]\vee b^{-1}[s]=b^{-1}[s]$ for all $[r]\in[F_s]$, hence $[s]\in T^{a,b}_{[F]}$.
\ep

If the action of $\ker N$ on $S/{\sim_N}$ has finite orbits, then, when we compute~\eqref{eq:Fab}, we can work only with $(\ker N)$-invariant sets $[F]$ and use the above description of $T^{a,b}_{[F]}$. Note that a simple condition which implies finiteness of the orbits is that the sets $N^{-1}(\lambda)/{\sim_N}$, $\lambda\in N(S)$, are finite.

\begin{example}\label{ex:generalized_scale}
Consider the setup from~\cite{BLRS}. Thus, we assume that $N$ is a nontrivial scale that takes values in $\N$ and satisfies the following conditions:
\begin{enumerate}
\item[(1)] $\ker N$ coincides with the core subsemigroup $S_c$ (hence $\sim_N$ is well-defined);
\item[(2)] $|N^{-1}(n)/{\sim_N}|=n$ for all $n\in N(S)$;
\item[(3)] if $N(s)=N(t)$, then either $s\sim_Nt$ or $sS\cap tS=\emptyset$;
\item[(4)] for all $s\in S$ and $n\in N(S)$, there is $t\in S$ such that $N(t)=n$ and $sS\cap tS\ne\emptyset$.
\end{enumerate}
It is shown in \cite{Stam} that for every $S$ there is at most one such scale $N$.

The scale $N$ satisfies condition~\eqref{eq:admis} by Lemma~\ref{lem:finite-level} (see also~\cite{Stam}*{Lemma 2.1} for a different argument). The corresponding $\zeta$-function is
$$
\zeta_N(\beta)=\sum_{n\in N(S)}n^{1-\beta}.
$$
It follows that its abscissa of convergence $\beta_c$ lies in the interval $[1,2]$. Consider three cases.

\smallskip

(i) $\beta>\beta_c$. In this case we are in the setting of Theorem~\ref{thm:finiteKMS}, so the measure $\mu_{N,\beta}$ is well-defined and the $\sigma^N$-KMS$_\beta$-states on $C^*(S)$ are parameterized by the tracial states on $C^*(\ker N)$, which recovers~\cite{BLRS}*{Theorem~3.3(2)}. But as a consistency check we can still try to apply Theorem~\ref{thm:unique} (to the scale $N^\beta$).

By Lemma~\ref{lem:ker-reduction} it suffices to compare $\Omega^{a,b}_\fix$ and $\Omega^{a,b}_\triv$ for $a,b\in\ker N$, $a\ne b$. By the proof of Theorem~\ref{thm:finiteKMS} the measure space $(\widehat{E(S)},\mu_{N,\beta})$ can be identified with $(S/{\sim_N},\zeta_N(\beta)^{-1}\sum_{[s]}N(s)^{-\beta}\delta_{[s]})$. By Lemma~\ref{lem:Aab}, under this identification, the set $\Omega^{a,b}_\triv$ is simply the set
$$
B^{a,b}=\{[s]\in S/{\sim_N}\mid s'=at=bt\ \text{for some}\ s'\sim_Ns\ \text{and}\ t\in S\},
$$
so that
$$
\mu_{N,\beta}(\Omega^{a,b}_\triv)=\zeta_N(\beta)^{-1}\sum_{[s]\in B^{a,b}}N(s)^{-\beta}.
$$
By letting $B^{a,b}_n:=B^{a,b}\cap(N^{-1}(n)/{\sim_N})$, we can write this as
$$
\mu_{N,\beta}(\Omega^{a,b}_\triv)=\zeta_N(\beta)^{-1}\sum_{n\in N(S)}n^{-\beta}|B^{a,b}_n|.
$$

Next, recall that by~\cite{BLRS}*{Proposition~3.6}, if $sS\cap tS=rS$ for some $r,s,t\in S$, then $N(s)N(S)\cap N(t)N(S)=N(r)N(S)$. For every $n\in N(S)$, consider the union $[F_n]$ of the sets $N^{-1}(m)/{\sim_N}$ over all $m\in N(S)$ such that $n\in mN(S)$. The sets $[F_n]$ are finite, $\vee$-closed, and contain $[e]$.
Note also that $[F_m]\subset[F_n]$ if $m$ divides $n$ in $N(S)$.
Hence, by Lemma~\ref{lem:Fab}, modulo a set of measure zero, we have
$$
\Omega^{a,b}_\fix=\bigcap_{n\in N(S)}\bigcup_{[s]\in T^{a,b}_{[F_n]}}Z_{s,(F_n)_s}.
$$
Since the sets $[F_n]$ are invariant under the actions of $a$ and $b$, using Lemma~\ref{lem:Fab-fix} and identifying our measure space with $S/{\sim_N}$ as before, we see that the set on the right hand side above is
$$
T^{a,b}:=\{[s]\in S/{\sim_N}: a^{-1}[s]=b^{-1}[s]\}.
$$
Hence, letting $T^{a,b}_n:=T^{a,b}\cap(N^{-1}(n)/{\sim_N})$, we get
$$
\mu_{N,\beta}(\Omega^{a,b}_\fix)=\zeta_N(\beta)^{-1}\sum_{n\in N(S)}n^{-\beta}|T^{a,b}_n|.
$$

Therefore Theorem~\ref{thm:unique} tells us that there is exactly one $\sigma^N$-KMS$_\beta$-state if and only if $B^{a,b}=T^{a,b}$ for all $a,b\in\ker N$, $a\ne b$. The last condition is easily seen to be equivalent to triviality of $I_\ell(\ker N)/\gamma$, which is consistent with Theorem~\ref{thm:finiteKMS}.

\smallskip

(ii) $1<\beta\le\beta_c$. By \cite{Stam}*{Proposition~3.1}, the semigroup $N(S)$ is the free abelian monoid generated by its irreducible elements. For every finite set $I\subset\Irr(N(S))$, consider $S_I:=N^{-1}(\langle I\rangle)$, where $\langle I\rangle$ is the monoid multiplicatively generated by $I$. The property $N(s)N(S)\cap N(t)N(S)=N(r)N(S)$ for $sS\cap tS=rS$ implies that $S_I$ is a hereditary LCM submonoid of $S$ and $N_I:=N|_{S_I}$ satisfies properties (1)--(4). As the abscissa of convergence of $\zeta_{N_I}$ is $1$, the considerations in (i) apply to $S_I$.

In particular, the measures $\mu_{N_I,\beta}$ are well-defined. Hence $\mu_{N,\beta}$ is well-defined, e.g., by Proposition~\ref{prop:muN}, according to which we need to verify a system of inequalities, but each of them can be considered as an inequality for $S_I$ for some $I$.

Next, Lemmas~\ref{lem:Aab} and~\ref{lem:Fab} show that to compute the measures of $\Omega^{a,b}_\triv$ and $\Omega^{a,b}_\fix$ we need to perform certain computations for finite subsets $[F]\subset S/{\sim_N}$ and pass to the limit. But as each $[F]$ lies in some $S_I/{\sim_N}$, we can write this as a double limit by first letting $[F]$ increase in $S_I/{\sim_N}$ and then letting $I\nearrow\Irr(N(S))$. For the first limit we can use the computations in (i). Thus, for all $a,b\in\ker N$,  we get
\begin{align*}
\mu_{N,\beta}(\Omega^{a,b}_\triv)&=\lim_I\zeta_{N_I}(\beta)^{-1}\sum_{n\in \langle I\rangle }n^{-\beta}|B^{a,b}_n|,\\
\mu_{N,\beta}(\Omega^{a,b}_\fix)&=\lim_I\zeta_{N_I}(\beta)^{-1}\sum_{n\in \langle I\rangle}n^{-\beta}|T^{a,b}_n|.
\end{align*}

The conclusion is that there is a $\sigma^N$-KMS$_\beta$-state on $C^*(S)$, and such a state is unique if and only if the limits above are equal for all $a,b\in\ker N$, $a\ne b$. This recovers \cite{BLRS}*{Theorem 3.3(5)} and shows that the sufficient condition for the uniqueness in~\cite{BLRS} is also necessary. Indeed, the sets~$A^{a,b}_n$ and~$F^{a,b}_n$ from~\cite{BLRS} are related to our sets $B^{a,b}_n$ and $T^{a,b}_n$ by $B^{a,b}_n=aA^{a,b}_n=bA^{a,b}_n$, $T^{a,b}_n=aF^{a,b}_n=bF^{a,b}_n$.

\smallskip

(iii) $\beta=1$. In this case the measure $\mu_N$ exists and is the limit of $\mu_{N,\beta}$ as $\beta\downarrow1$. The computations for this measure get easier thanks to the property that for every $n\in N(S)$ the sets $Z_s$, $[s]\in N^{-1}(n)/{\sim_N}$, are disjoint and their union is a set of full measure. This implies that if $[F_n]$ are the sets introduced in (i), then for every $[s]\in [F_n]$ we have
$$
Z_s=\bigsqcup_{[t]\in N^{-1}(n)/{\sim_N}:[s]\le[t]}Z_t,
$$
modulo a set of measure zero. Since the set $B^{a,b}$ is directed, in the sense that if $[s]\in B^{a,b}$ and $[s]\le[t]$ then also $[t]\in B^{a,b}$, we conclude that, modulo a set of measure zero,
$$
\bigcup_{[s]\in B^{a,b}\cap [F_n]} Z_s=\bigsqcup_{[s]\in B^{a,b}_n}Z_s.
$$
Hence, by Lemma~\ref{lem:Aab}, for all $a,b\in\ker N$, we have
$$
\mu_{N}(\Omega^{a,b}_\triv)=\lim_{n\in N(S)}\mu_N\bigg(\bigcup_{[s]\in B^{a,b}\cap [F_n]} Z_s\bigg)=\lim_{n\in N(S)}\frac{|B^{a,b}_n|}{n},
$$
where the limits are taken over the monoid $N(S)$ ordered by divisibility.

Similar considerations apply to $\Omega^{a,b}_\fix$: we have $\mu_N(Z_{s,(F_n)_s})=0$ for $[s]\in F_n$, $N(s)\ne n$, so that, modulo a set of measure zero,
$$
\bigcup_{[s]\in T^{a,b}_{[F_n]}}Z_{s,(F_n)_s}=\bigsqcup_{[s]\in T^{a,b}_n}Z_s.
$$
Hence, by Lemma~\ref{lem:Fab},
$$
\mu_{N}(\Omega^{a,b}_\fix)=\lim_{n\in N(S)}\frac{|T^{a,b}_n|}{n}.
$$

Therefore we conclude that there is a $\sigma^N$-KMS$_1$-state on $C^*(S)$, and such a state is unique if and only if the limits above are equal for all $a,b\in\ker N$, $a\ne b$. This recovers \cite{BLRS}*{Theorem 3.3(4)} and shows that the sufficient condition for the uniqueness in~\cite{BLRS} is also necessary.
\hfill$\diamondsuit$
\end{example}

\begin{example}
Consider the right LCM semigroup $S=G\rtimes\Z^2_+$ from~\cite{Stam}*{Example~5.3}, where $G=\bigoplus_{\Z_+}\Z/2\Z$ and $(x,y)\in\Z^2_+$ acts on $G$ by the injective endomorphism $\sigma^x(\operatorname{id}+\sigma)^y$, where $\sigma\colon G\to G$ is the shift: $\sigma(g_0,g_1,\dots)=(0,g_0,g_1,\dots)$. Define a scale $N$ on $S$ by
$N(g,x,y)=2^{x+y}$. This scale is not a generalized scale in the sense of the previous example, moreover, the semigroup~$S$ does not admit a generalized scale~\cite{Stam}. Nevertheless, as we will see shortly, the analysis of the pair $(S,N)$ is very similar to the previous example. More precisely, the picture is almost identical to that for the right LCM semigroup $\Z\rtimes P_{p,q}$ with a generalized scale, where $P_{p,q}\subset\N$ is the submonoid multiplicatively generated by two different primes $p$ and $q$.

We have $\ker N=G$. Denote by $G_{x,y}$ the image of $G$ under $\sigma^x(\operatorname{id}+\sigma)^y$. Then $S/{\sim_N}$ can be identified with the disjoint union $\bigsqcup_{(x,y)\in\Z^2_+}G/G_{x,y}$, so that the class of $(g,x,y)\in S$ in $S/{\sim_N}$ is the class $[g]\in G/G_{x,y}$, cf.~Example~\ref{ex:ax+b}. The partial order on $S/{\sim_N}$ is described as follows: given $[g]\in G/G_{x,y}$ and $[h]\in G/G_{u,v}$, we have $[g]\le[h]$ if and only if
$$
x\le u,\ \ y\le v,\ \ g\equiv h\mod G_{x,y}.
$$

As $[G:G_{0,1}]=[G:G_{1,0}]=2$ and the homomorphisms $\sigma$ and $\operatorname{id}+\sigma$ are injective, we have
$$
[G:G_{x,y}]=[G:G_{x,0}]\,[G_{x,0}:G_{x,y}]=[G:G_{x,0}]\,[G:G_{0,y}]=2^{x+y}.
$$
Hence the $\zeta$-function of $N$ equals
$$
\zeta_N(\beta)=\sum^\infty_{x,y=0}|G/G_{x,y}|\,2^{-\beta(x+y)}=\sum^\infty_{x,y=0}2^{-(\beta-1)(x+y)}=(1-2^{-(\beta-1)})^{-2}.
$$
Therefore $\beta_c=1$. Thus, by Theorem~\ref{thm:finiteKMS}, for $\beta>1$ we have an affine bijective correspondence between the $\sigma^N$-KMS$_\beta$-states on $C^*(S)$ and the states on the commutative C$^*$-algebra $C^*(G)$, or in other words, the probability measures on $\{0,1\}^{\Z_+}$.

Next, we claim that for $\beta<1$ there are no KMS$_\beta$-states. Indeed, assume that the measure $\mu_{N,\beta}$ exists for some $\beta$. By our description of the order structure on $S/{\sim_N}$, if $s,t\in S$ are such that their equivalence classes are equal to two different elements of $G/G_{1,0}$, then $Z_s\cap Z_t=\emptyset$. Hence
$$
1\ge\mu_{N,\beta}(Z_s)+\mu_{N,\beta}(Z_t)=2\cdot 2^{-\beta},
$$
which forces $\beta\ge1$. Note for a later use that for $\beta=1$ the measure $\mu_N$ exists, since the measures~$\mu_{N,\beta}$ exist for all $\beta>1$, and the above argument shows that
\begin{equation}\label{eq:zero}
\mu_N(Z_{e,\{s,t\}})=0.
\end{equation}

It remains to understand the case $\beta=1$. Let us try to check whether the uniqueness criterion from Theorem~\ref{thm:unique} is satisfied. By Lemma~\ref{lem:ker-reduction} it suffices to compare $\Omega^{a,b}_\fix$ and $\Omega^{a,b}_\triv$ for $a,b\in G$, $a\ne b$. Since $S$ is right cancellative, we have $\Omega^{a,b}_\triv=\emptyset$, so we just need to check whether $\mu_N(\Omega^{a,b}_\fix)=0$.

For this we have to understand the operation $\vee$ on $S/{\sim_N}$. We start by observing that
\begin{equation}\label{eq:cap}
G_{x,y}=G_{x,0}\cap G_{0,y}.
\end{equation}
Indeed, writing $\operatorname{id}$ as $(-\sigma+(\operatorname{id}+\sigma))^{x+y}$, we first of all can conclude that
\begin{equation}\label{eq:cup}
G=G_{x,0}+G_{0,y}.
\end{equation}
But then
$$
[G_{x,0}: G_{x,0}\cap G_{0,y}]=[G:G_{0,y}]=[G_{x,0}:G_{x,y}],
$$
and as $G_{x,y}\subset G_{x,0}\cap G_{0,y}$, we get $G_{x,y}=G_{x,0}\cap G_{0,y}$. From \eqref{eq:cap} and \eqref{eq:cup} we deduce by applying powers of $\sigma$ and $\operatorname{id}+\sigma$ that, more generally,
$$
G_{x,y}\cap G_{u,v}=G_{x\vee u,y\vee v}\ \ \text{and}\ \ G_{x,y}+G_{u,v}=G_{x\wedge u,y\wedge v},
$$
where $\vee$ and $\wedge$ are the operations $\max$ and $\min$ on $\Z_+$.

Now, take $[g]\in G/G_{x,y}$ and $[h]\in G/G_{u,v}$. If these elements are dominated by $[f]\in G/G_{p,q}$, then $x\vee u\le p$, $y\vee v\le q$ and $g-f\in G_{x,y}$, $h-f\in G_{u,v}$. It follows that $g-h\in G_{x,y}+G_{u,v}=G_{x\wedge u,y\wedge v}$. Conversely, if $g-h\in G_{x\wedge u,y\wedge v}$, then we have $g-g'=h-h'$ for some $g'\in G_{x,y}$ and $h'\in G_{u,v}$. Since $G_{x,y}\cap G_{u,v}=G_{x\vee u,y\vee v}$, the class of the element $f:=g-g'=h-h'$ in $G/G_{x\vee u,y\vee v}$ is independent of the choice of $g'$ and $h'$. It follows that $[g]\vee[h]$ equals this class. To summarize, we have $[g]\vee[h]=\infty$ if $g-h\not\in G_{x\wedge u,y\wedge v}$, and $[g]\vee[h]\in G/G_{x\vee u,y\vee v}$ otherwise.

Returning to the computation of $\mu_N(\Omega^{a,b}_\fix)$, consider the sets
$$
[F_n]:=\bigcup^n_{x,y=0}G/G_{x,y}\subset S/{\sim_N}.
$$
These sets are finite, $\vee$-closed and invariant under the action of $G$. They form an increasing sequence with union~$S/{\sim_N}$.
Hence, by Lemma~\ref{lem:Fab}, we have
$$
\mu_N(\Omega^{a,b}_\fix)=\lim_{n\to\infty}\mu_N\bigg(\bigcup_{[s]\in T^{a,b}_{[F_n]}}Z_{s,(F_n)_s}\bigg).
$$

We claim that only elements of $(G/G_{n,n})\cap T^{a,b}_{[F_n]}$ can give a nontrivial contribution to the measure of $\bigcup_{[s]\in T^{a,b}_{[F_n]}}Z_{s,(F_n)_s}$. Indeed, assume $s\in F_n$ has class $[g]\in G/G_{x,y}$ for some $x\le n$ and $y\le n$ such that at least one inequality is strict. Let us assume $x<n$. Then there are two different elements $[h],[f]\in G/G_{x+1,y}$ such that $h$ and $f$ are equal to $g$ modulo $G_{x,y}$. Lifting them to elements $t,r\in (F_n)_s$ we get, similarly to~\eqref{eq:zero}, that
$$
\mu_N(Z_{s,(F_n)_s})\le\mu_N(Z_{s,\{t,r\}})=2^{-x-y}-2\cdot 2^{-x-y-1}=0.
$$
For $y<n$ the argument is similar. Thus our claim is proved.

By Lemma~\ref{lem:Fab-fix} we also have that $T^{a,b}_{[F_n]}$ consists of the elements of $[F_n]$ on which the actions of~$-a$ and~$-b$ coincide, that is, $T^{a,b}_{[F_n]}$ equals the union of $G/G_{x,y}$ over $x,y\le n$ such that $a-b\in G_{x,y}$. Since only elements of $(G/G_{n,n})\cap T^{a,b}_{[F_n]}$ can give a nontrivial contribution to the measure of $\bigcup_{[s]\in T^{a,b}_{[F_n]}}Z_{s,(F_n)_s}$, we conclude that
$$
\mu_N\bigg(\bigcup_{[s]\in T^{a,b}_{[F_n]}}Z_{s,(F_n)_s}\bigg)=0\ \ \text{if}\ \ a-b\not\in G_{n,n}.
$$
But as $G_{n,n}\subset\sigma^n(G)$ and $a-b\ne0$, we have $a-b\not\in G_{n,n}$ for all $n$ large enough. Therefore $\mu_N(\Omega^{a,b}_\fix)=0$. Hence there is exactly one $\sigma^N$-KMS$_1$-state on~$C^*(S)$.
\hfill$\diamondsuit$
\end{example}

We finish with a short comment on the relation between uniqueness of KMS-states and simplicity of the \emph{boundary quotient} of $C^*(S)$.

Recall that a finite subset $F\subset S$ is called a foundation set if for every $s\in S$ there is $t\in F$ such that $sS\cap tS\ne\emptyset$.
The boundary quotient of $C^*(S)$ is defined as the quotient of $C^*(S)$ by the ideal generated by the elements $\prod_{s\in F}(1-v_s^{\phantom{*}}v_s^*)$, where $F$ runs over the foundation sets of $S$~\cite{BRRW}. In~\cite{Star} it is shown that this quotient is defined by a closed invariant subset $\widehat E_{\mathrm{tight}}(S)$ of $\widehat{E(S)}$.

\smallskip

We are not aware of sufficiently general conditions that guarantee that a KMS-state factors through the boundary quotient, but in principle this is again subject to a verifiable condition.

\begin{lemma}
Assume $S$ is a right LCM semigroup, $N$ is a scale on $S$, and $\varphi$ is a $\sigma^N$-KMS$_1$-state on $C^*(S)$. Then it factors through the boundary quotient if and only if
$$
1+\sum_{\emptyset\neq K\subset [F]} (-1)^{\vert K\vert} N(q_K)^{-1}=0
$$
for all finite subsets $[F]\subset S/{\sim_N}$ such that for every $[s]\in S/{\sim_N}$ there is $[t]\in[F]$ with $[s]\vee[t]<\infty$.
\end{lemma}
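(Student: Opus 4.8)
The plan is to translate the statement into the groupoid picture and read it off from the measure $\mu_N$ together with Proposition~\ref{prop:muN}. Since $\varphi$ is a $\sigma^N$-KMS$_1$-state and $1>0$, Lemma~\ref{lem:neccessary} shows that $N$ satisfies~\eqref{eq:admis}, so by Theorem~\ref{thm:KMS} and Proposition~\ref{prop:muN} the state $\varphi$ corresponds to a pair $(\mu_N,\{\tau_\chi\})$, where $\mu_N$ is the unique probability measure on $\widehat{E(S)}$ with $\mu_N(Z_s)=N(s)^{-1}$ satisfying~\eqref{eq:scaling}. By~\cite{Star} the boundary quotient is $C^*(\CG(S)_{\widehat E_{\mathrm{tight}}(S)})$, that is, the quotient of $C^*(S)\cong C^*(\CG(S))$ by the ideal $C^*(\CG(S)_U)$ attached to the open invariant set $U:=\widehat{E(S)}\setminus\widehat E_{\mathrm{tight}}(S)$. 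The goal is therefore to show that $\varphi$ kills this ideal if and only if the displayed identities hold.

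I would first record that, under the isomorphism~\eqref{eq:groupoid-iso}, the generator $\prod_{s\in F}(1-v_sv_s^*)$ of the ideal equals the diagonal projection $\un_{Z_{e,F}}$ (using $\un_{Z_e}=1$), so $\varphi\big(\prod_{s\in F}(1-v_sv_s^*)\big)=\mu_N(Z_{e,F})$ because $\varphi$ restricts to integration against $\mu_N$ on $C(\widehat{E(S)})$. This immediately gives the forward direction: if $\varphi$ factors through the quotient then $\mu_N(Z_{e,F})=0$ for every foundation set $F$. For the converse I would use Starling's description of the tight characters to identify $U=\bigcup_F Z_{e,F}$, the union taken over foundation sets $F$; since $S$ is countable this is a countable union, so $\mu_N(Z_{e,F})=0$ for all $F$ forces $\mu_N(U)=0$. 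The KMS formula from Theorem~\ref{thm:KMS} then finishes the job: for $f\in C_c(\CG(S)_U)$ the only contributions come from isotropy elements $g\in\CG(S)^\chi_\chi$ with $\chi\in U$, so the integrand is supported on $U$ and $\varphi(f)=0$, i.e.\ $\varphi$ annihilates the ideal.

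It then remains to match the two index sets and insert the formula. Here I would invoke that the nonemptiness of $sS\cap tS$, equivalently $[s]\vee[t]<\infty$ by Lemma~\ref{lem:quasi-lattice}, depends only on the classes $[s],[t]\in S/{\sim_N}$ (the lemma preceding~\eqref{eq:order}); consequently a finite set $F\subset S$ is a foundation set precisely when its image $[F]$ has the property that for every $[s]\in S/{\sim_N}$ there is $[t]\in[F]$ with $[s]\vee[t]<\infty$, and every such finite $[F]\subset S/{\sim_N}$ is the image of a foundation set. Since $\mu_N(Z_{e,F})$ depends only on $[F]$, evaluating~\eqref{eq:muN} at $s=e$, $\beta=1$ (where $F\ssubset eS=S$ is no restriction and $N(e)=1$) turns $\mu_N(Z_{e,F})=0$ into exactly $1+\sum_{\emptyset\neq K\subset[F]}(-1)^{|K|}N(q_K)^{-1}=0$, with the two families of conditions coinciding.

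I expect the main obstacle to be the equivalence ``$\varphi$ factors through the boundary quotient $\iff\mu_N(U)=0$'': pinning down $U=\bigcup_F Z_{e,F}$ from the tight-spectrum description in~\cite{Star}, and justifying the converse via the support of the KMS integrand, require care, especially because $\CG(S)$ is only locally compact and non-Hausdorff, so one must make sure the computations in Theorem~\ref{thm:KMS} and the exactness used for the ideal $C^*(\CG(S)_U)$ remain valid in that generality. Once this bridge is in place, the combinatorial translation of the foundation-set condition and the substitution into~\eqref{eq:muN} are routine.
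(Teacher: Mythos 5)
Your forward direction and the combinatorial dictionary coincide with the paper's: the paper also begins by observing that $F\ssubset S$ is a foundation set precisely when $[F]$ meets every class of $S/{\sim_N}$ in the sense of the statement, and that by~\eqref{eq:muN} the displayed identity says exactly that $\varphi$ vanishes on the generators $\prod_{s\in F}(1-v_s^{\phantom{*}}v_s^*)$. The real divergence is in the converse. The paper disposes of it in one line, with no groupoid input at all: for a KMS-state the GNS vector is separating for $\pi_\varphi(C^*(S))''$, so $\varphi(p)=0$ for a positive element $p$ forces $\pi_\varphi(p)=0$; hence vanishing on the \emph{positive} generators of the ideal already implies vanishing on the whole ideal, and $\varphi$ factors through the quotient. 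Your route via $\mu_N(U)=0$ for $U:=\widehat{E(S)}\setminus\widehat E_{\mathrm{tight}}(S)$ can be made to work, but it is heavier and has two concrete problems. First, the identification $U=\bigcup_F Z_{e,F}$ is not what Starling's description gives: tightness is a cover condition at \emph{every} idempotent $e_s$, not only at the unit, so the complement of the tight spectrum is $\bigcup_{s,F}Z_{s,sF}=\bigcup_{s,F}\lambda_s\cdot Z_{e,F}$, with $s$ ranging over $S$ and $F$ over foundation sets; it is not clear (and you do not argue) that the translated pieces are absorbed into the plain union over foundation sets. This is harmless for your purpose --- by the scaling relation~\eqref{eq:scaling} one has $\mu_N(\lambda_s\cdot Z_{e,F})=N(s)^{-1}\mu_N(Z_{e,F})=0$ --- but as stated the identification is a gap that needs this repair. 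Second, your argument requires $S$ countable (for Theorem~\ref{thm:KMS}, Proposition~\ref{prop:muN}, and the countable union), whereas the lemma carries no countability hypothesis; the paper's argument applies verbatim to arbitrary right LCM semigroups, since the key identity $\varphi\bigl(\prod_{s\in F}(1-v_s^{\phantom{*}}v_s^*)\bigr)=1+\sum_{\emptyset\neq K\subset[F]}(-1)^{|K|}N(q_K)^{-1}$ can be obtained without~\eqref{eq:muN}: expand the product of the commuting projections $v_s^{\phantom{*}}v_s^*$ into $\sum_{K\subset F}(-1)^{|K|}v_{q_K}^{\phantom{*}}v_{q_K}^*$ using the defining relations of $C^*(S)$, and note that the KMS-condition gives $\varphi(v_q^{\phantom{*}}v_q^*)=N(q)^{-1}$, with $N(q_K)$ depending only on $[F]$ thanks to~\eqref{eq:admis}, which is available by Lemma~\ref{lem:neccessary}. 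In short: your plan is viable once the description of $U$ is corrected and countability is either assumed or removed by the paper's direct-limit device, but the paper's abstract KMS/ideal argument makes the entire measure-theoretic bridge, and the non-Hausdorff exactness worries you flag, unnecessary.
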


\bp
First of all observe that $F\ssubset S$ is a foundation set if and only if $[F]\ssubset S/{\sim_N}$ has the property that for every $[s]\in S/{\sim_N}$ there is $[t]\in[F]$ such that $[s]\vee[t]<\infty$. Hence, by \eqref{eq:muN}, the condition in the formulation of the lemma means precisely that $\varphi$ vanishes on $\prod_{s\in F}(1-v_s^{\phantom{*}}v_s^*)$ for all foundation sets $F$. Since the KMS-condition implies that in order to check that $\varphi$ vanishes on an ideal it suffices to check that it vanishes on positive generators of the ideal, this gives the result.
\ep

For the KMS-states that do factor through the boundary quotient we have the following result.

\begin{proposition}\label{prop:BQ simple}
Assume $S$ is a countable right LCM semigroup, $N$ is a scale on $S$ and $\varphi$ is a $\sigma^N$-KMS$_1$-state on $C^*(S)$. Assume also that the following conditions are satisfied:
\begin{enumerate}
\item $\varphi$ is a unique $\sigma^N$-KMS$_1$-state;
\item $\varphi$ factors through the boundary quotient of $C^*(S)$;
\item the groupoid $\CG_{\mathrm{tight}}(S):=\CG(S)_{\widehat E_{\mathrm{tight}}(S)}$ is Hausdorff (see~\cite{Star}*{Proposition~4.1}) and amenable.
\end{enumerate}
Then the boundary quotient of $C^*(S)$ is simple.
\end{proposition}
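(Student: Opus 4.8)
The plan is to identify the boundary quotient with the groupoid C$^*$-algebra $C^*(\CG_{\mathrm{tight}}(S))$ and then invoke the standard simplicity criterion for \'etale groupoid C$^*$-algebras. Under assumption (3), and since $S$ is countable, $\CG_{\mathrm{tight}}(S)$ is a second countable, Hausdorff, amenable \'etale groupoid, so $C^*(\CG_{\mathrm{tight}}(S))$ coincides with its reduced version and is simple if and only if $\CG_{\mathrm{tight}}(S)$ is both \emph{minimal} and \emph{effective} (equivalently, since $\widehat E_{\mathrm{tight}}(S)$ is second countable, topologically principal), by the simplicity theorem of Renault and of Brown--Clark--Farthing--Sims. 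Thus the whole argument reduces to verifying these two topological properties, with the dynamical hypotheses (1)--(2) used to control the isotropy and the uniqueness in Proposition~\ref{prop:muN} used to pin down the measure-theoretic behaviour of $\mu_N$.

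First I would establish effectiveness. By (2) the state $\varphi$ descends to a $\sigma^N$-KMS$_1$-state on the boundary quotient, and conversely any KMS$_1$-state of the quotient pulls back to a $\sigma^N$-KMS$_1$-state of $C^*(S)$, because the defining generators $\prod_{s\in F}(1-v_s^{\phantom{*}}v_s^*)$ of the boundary ideal are $\sigma^N$-invariant; by (1) the pulled-back state must be $\varphi$, so the boundary quotient has a \emph{unique} $\sigma^N$-KMS$_1$-state. Viewing $\mu:=\mu_N$ as a probability measure on $\widehat E_{\mathrm{tight}}(S)=\CG_{\mathrm{tight}}(S)^{(0)}$ (it is concentrated there precisely because $\varphi$ factors through the boundary quotient), Corollary~\ref{cor:uniqueKMS} applied to $\CG_{\mathrm{tight}}(S)$ gives that $\mu$-almost every point has trivial isotropy; note that the isotropy of a point of $\widehat E_{\mathrm{tight}}(S)$ is the same whether computed in $\CG(S)$ or in its reduction. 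Provided $\mu$ has full support, the complement of this full-measure set is a null set, hence has empty interior, so the trivially fixed points are dense; this is topological principality, i.e.\ effectiveness.

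The remaining, and main, point is minimality together with full support of $\mu$. The natural first step is ergodicity: if $C\subset\widehat E_{\mathrm{tight}}(S)$ is closed and $\CG_{\mathrm{tight}}(S)$-invariant, then each $\lambda_s$ preserves $C$ orbit-wise, so the scaling condition~\eqref{eq:scaling} restricts to $\mu|_C$; normalising and using the uniqueness in Proposition~\ref{prop:muN} forces $\mu(C)\in\{0,1\}$. Granting full support, a closed invariant set of measure $1$ is everything (its open complement is null, hence empty), so together with ergodicity only the closed invariant \emph{null} sets remain to be excluded in order to get minimality. The hard part will therefore be twofold and intertwined: showing that the unique scaling measure $\mu$ charges every nonempty relatively open subset of $\widehat E_{\mathrm{tight}}(S)$, and ruling out proper closed invariant null sets. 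I would attack full support by proving that a nonempty basic set $Z_{s,F}\cap\widehat E_{\mathrm{tight}}(S)$ has positive $\mu$-measure, using formula~\eqref{eq:muN} together with the defining property of tight characters to prevent the inclusion--exclusion expression from collapsing to zero; and I would obtain the absence of proper closed invariant sets from the transitivity built into the tight groupoid of a right LCM semigroup, as in Starling's analysis of $\widehat E_{\mathrm{tight}}(S)$, which combined with ergodicity and full support yields minimality. Once minimality and effectiveness are in hand, the cited criterion gives simplicity of the boundary quotient.
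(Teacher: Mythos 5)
Your overall architecture coincides with the paper's: identify the boundary quotient with $C^*(\CG_{\mathrm{tight}}(S))$ and reduce simplicity to minimality plus topological principality for a Hausdorff amenable \'etale groupoid (the paper invokes \cite{Star}*{Theorem~4.12} where you invoke Renault and Brown--Clark--Farthing--Sims; same substance), and your effectiveness argument --- uniqueness of $\varphi$ plus Corollary~\ref{cor:uniqueKMS} gives $\mu_N$-a.e.\ trivial isotropy, then full support upgrades ``full measure'' to ``dense'' --- is exactly the paper's. But there is a genuine gap at precisely the point you yourself flag as ``the hard part'': both full support of $\mu_N$ on $\widehat E_{\mathrm{tight}}(S)$ and minimality are left unproven, and your proposed attacks on them do not go through as described. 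The direct proof of full support via the inclusion--exclusion formula~\eqref{eq:muN} is not a routine computation: those expressions can and do collapse to zero on \emph{nonempty} cylinder sets of $\widehat{E(S)}$ (see~\eqref{eq:zero} in the paper's last example, where $Z_{e,\{s,t\}}\neq\emptyset$ but has $\mu_N$-measure zero), so positivity of $\mu_N$ on nonempty sets $Z_{s,F}\cap\widehat E_{\mathrm{tight}}(S)$ would require a genuine analysis of tight characters versus foundation sets, in effect redoing part of Starling's work. And your closing sentence defers minimality to ``transitivity built into the tight groupoid\dots as in Starling's analysis,'' which is circular as stated: the absence of proper closed invariant subsets \emph{is} minimality, and neither your ergodicity step nor full support contributes anything toward establishing it.

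The repair is to reverse your logical order. Minimality of the $\CG(S)$-action on $\widehat E_{\mathrm{tight}}(S)$ is an unconditional known fact for right LCM semigroups, namely \cite{Star}*{Lemma~4.2}; it uses none of the hypotheses (1)--(3). With that in hand, full support is immediate rather than hard: condition (2) forces $\mu_N$ to be concentrated on $\widehat E_{\mathrm{tight}}(S)$, and the support of $\mu_N$ there is a nonempty closed set that is invariant by quasi-invariance (the scaling condition~\eqref{eq:scaling}), hence equals all of $\widehat E_{\mathrm{tight}}(S)$ by minimality. Your own effectiveness argument then closes as intended, your ergodicity step becomes superfluous, and the simplicity criterion applies. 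This is exactly the paper's proof, which occupies only a few lines once \cite{Star}*{Lemma~4.2} and \cite{Star}*{Theorem~4.12} are cited.
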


\bp
The assumption that $\varphi$ factors through the boundary quotient $C^*(\CG_{\mathrm{tight}}(S))$ implies that the measure $\mu_N$ is concentrated on the closed invariant set $\widehat E_{\mathrm{tight}}(S)\subset\widehat{E(S)}$. By~\cite{Star}*{Lemma~4.2}, the action of $\CG(S)$ on this set is minimal. At the same time the uniqueness of $\varphi$ implies that the set of points in $\widehat{E(S)}$ with trivial isotropy is a set of full measure. By the minimality of the action, it follows that the set of such points in $\widehat E_{\mathrm{tight}}(S)$ is dense, that is, the groupoid $\CG_{\mathrm{tight}}(S)$ is topologically principal. By~\cite{Star}*{Theorem~4.12} we conclude that the boundary quotient is simple.
\ep

\bigskip


\end{document}